\pgfplotsset{compat=1.18} 
\newtheorem{thm}{Theorem}[section]
\newtheorem{prop}[thm]{Proposition}
\newtheorem{cor}[thm]{Corollary}
\newtheorem{lemma}[thm]{Lemma}
\theoremstyle{definition}
\newtheorem{example}[thm]{Example}
\newtheorem{defn}[thm]{Definition}
\newtheorem{quest}[thm]{Question}
\definecolor{ao}{rgb}{0.0,0.5, 0.0}
\definecolor{amethyst}{rgb}{0.6, 0.4, 0.8}
\tikzset{%
auto, vertex/.style={circle,draw=black!100,thin,inner sep=0pt,minimum size=2mm, fill=black!0}, flood/.style={circle,draw=black!100,fill=cyan!100, thin,inner sep=0pt,minimum size=2mm}
}
\newcommand{\FG}{\mathcal{F}(G)}
\newcommand{\F}{F_G(x)}
\newcommand{\C}{\overline{C}}
\newcommand{\co}{\textrm{co}}
\newcommand{\cat}{\textrm{CEN}}
\newcommand{\tick}{\textrm{TICK}}
\title{The Flood Polynomial of a Graph}
\author{Karin R. Saoub}
\address{\scriptsize{Department of Mathematics, Computer Science, and Physics, Roanoke College, Salem, VA}}
\email{saoub@roanoke.edu}
\author{Michael Weselcouch}
\address{\scriptsize{Department of Mathematics, Computer Science, and Physics, Roanoke College, Salem, VA}}
\email{weselcouch@roanoke.edu}
\author{Trey Wilhoit}
\address{\scriptsize{Department of Mathematics, Computer Science, and Physics, Roanoke College, Salem, VA}}
\email{ptwilhoit@mail.roanoke.edu}
\author{Jackson Wills}
\address{\scriptsize{Department of Mathematics, Computer Science, and Physics, Roanoke College, Salem, VA}}
\email{jtwills@mail.roanoke.edu}
\date{\today}
\begin{document}

\begin{abstract}
    The flood polynomial of a simple finite graph is a weight generating function that counts all flooding cascade sets of the graph.  The flood polynomial is inspired by the water mechanics in the video game Minecraft.  We give necessary conditions for two graphs to have the same flood polynomial.  We then provide a formula for the flood polynomials of certain families of graphs.  We will see that many flood polynomials can be expressed using a Fibonacci-like recurrence and in some cases are equal to Fibonacci or Lucas polynomials.  We then provide general examples of pairs of distinct graphs with the same flood polynomial.  In these examples, the flood polynomial will be expressed as the product of Fibonacci and Lucas polynomials.
\end{abstract}


\maketitle
%

\section{Introduction}

In this article, we introduce a graph polynomial, called the flood polynomial, which is based on the water mechanics in the video game Minecraft. The flood polynomial is a weight generating function that counts certain subsets of the vertices of a given graph.

In Minecraft, water blocks and air blocks have an interesting relationship. If an air block is neighbors with two or more water blocks, then the air block will convert to water, allowing a player to convert a large region of air to water with only a few initial water blocks.  Although regions in Minecraft must be contained in a grid, these flooding mechanics can be extended to general graphs.  


  In this paper, we determine families of graphs which have flood polynomials that are products of Fibonacci polynomials and Lucas polynomials, providing a new combinatorial interpretation of a well-known identity involving Fibonacci and Lucas polynomials. We give an explicit formula for finding the flood polynomials of these graphs in terms of these products.  This paper presents the first study of the flood polynomial, and we anticipate more results are possible beyond what is discussed in this article.

This paper is organized as follows: in Section 2, we provide the necessary background, including the definitions of cascade sets and the flood polynomial; in Section 3, we discuss properties of a graph that can be determined by its flood polynomial
; in Section 4, we give formulas for the  flood polynomials of certain families of graphs; 
in Section 5, we provide examples of pairs of distinct graphs with the same flood polynomial
; we conclude with presenting open questions in Section 6.

\section{Preliminaries}
We begin with some preliminaries about compositions and partitions, graphs, flood sets, and the flood polynomial. For more information, see \cite{Stanley}.

\subsection{Compositions and partitions}\label{comps and partitions}

A \textit{composition} $\alpha= (\alpha_1, \alpha_2, \dots, \alpha_k)$ of $n$ is a finite sequence of positive integers summing to $n$. The compositions of $n$ are in bijection with the subsets of $[n-1]$ in the following way: for any composition $\alpha$, define \[D(\alpha) = \{\alpha_1, \quad \alpha_1 + \alpha_2, \quad \dots, \quad \alpha_1 + \alpha_2 +\dots +\alpha_{k-1}\} \subseteq [n-1].\]

Likewise, for any subset $S = \{s_1, s_2, \dots, s_{k-1}\}\subseteq [n-1]$ with $s_1<s_2<\dots < s_{k-1}$, we can define the composition
\[\co(S) = (s_1, \quad s_2-s_1, \quad s_3-s_2, \quad \dots, \quad s_{k-1}-s_{k-2}, \quad n-s_{k-1}).\]

A \emph{partition} of $n$ is a composition of $n$ whose parts are in weakly decreasing order.  Given a composition $\alpha$ and partition $\lambda$, we write $\alpha \sim \lambda$ if $\lambda$ is formed by rearranging the parts of $\alpha$ in weakly decreasing order.  We use the notation $\alpha \vDash n$ if $\alpha$ is a composition of $n$ and $\lambda \vdash n$ if $\lambda$ is a partition of $n$.  We use $\ell(\alpha)$ to denote the number of parts of $\alpha$.

\subsection{Graphs}

A \emph{graph} $G$ consists of two sets: the vertex set, $V(G)$, and the edge set, $E(G)$.  An \emph{edge} is an unordered pair of vertices.  When it is clear what graph we are talking about, we will write $V$ for $V(G)$ and $E$ for $E(G)$.  Throughout the article we will use $n$ to represent the size of the graph, i.e. $n = |V|$.  We say that two vertices $a$ and $b$ are \emph{neighbors} in $G$ if $ab \in E(G)$, that is to say, $a$ and $b$ share an edge.  The \emph{degree} of a vertex $v$, denoted $\deg(v)$, is the number of neighbors of $v$. In this paper we only consider finite \emph{simple} graphs, which are graphs that do not contain any loops or multi-edges.  Consider the graphs shown below.  The graph on the left is simple, whereas the graph on the right is not simple since it has a multi-edge.

\begin{center}

\begin{tikzpicture}
\node (v1) at ( 0,0) [vertex] {};
\node (v2) at ( 1,0) [vertex] {};
\node (v4) at ( 0,1) [vertex] {};
\node (v5) at ( 1,1) [vertex] {};
\node (v6) at ( 0,2) [vertex] {};
\node (v7) at ( 1,2) [vertex] {};
\draw [-] (v1) to (v4);
\draw [-] (v4) to (v6);
\draw [-] (v2) to (v5);
\draw [-] (v5) to (v7);
\draw [-] (v1) to (v7);
\draw [-] (v2) to (v6);
\end{tikzpicture}
\hspace{2 cm}
\begin{tikzpicture}
\node (v1) at ( 0,0) [vertex] {};
\node (v2) at ( 1,0) [vertex] {};
\node (v4) at ( 0,1) [vertex] {};
\node (v5) at ( 1,1) [vertex] {};
\node (v6) at ( 0,2) [vertex] {};
\node (v7) at ( 1,2) [vertex] {};
\draw [-] (v1) to (v4);
\draw [-] (v4) to (v6);
\draw [-] (v2) to (v5);
\path (v2) edge [out=45,in=-45] (v5);
\draw [-] (v5) to (v7);
\draw [-] (v1) to (v7);
\draw [-] (v2) to (v6);
\end{tikzpicture}

\end{center}

\subsection{Cascade Sets and Flood Sets}

Given a graph $G$, a \emph{cascade set of $G$} is a subset of the vertices of $G$.  We are going to be interested in cascade sets that ``completely flood" the graph using the flooding mechanics of Minecraft.  In order to make this concept mathematically rigorous, we need the following definition.

\begin{defn}
For a cascade set $C$ and graph $G$, the \emph{cascade sequence} is a sequence of sets $C_0, C_1, \dots$ satisfying the following:
\begin{enumerate}
\item $C_0 = C$, and
\item for $k \geq 1$, \\ $C_k = C_{k-1} \cup \{x \in V \mid x \text{ has at least two neighbors in } C_{k-1} \}$.
\end{enumerate}
\end{defn}

In all future graphs, vertices in cascade sets will be denoted with an aqua coloring. 


\begin{example} \label{cascade set example}
Consider cascade set $C = \{v_1, v_4, v_6\}$ for the following graph. 

\begin{center}
\begin{tikzpicture}
    \node (v1) at (-4,0) [flood,  label=above:$v_1$]  {}; 
    \node (v2) at (-4,-1) [vertex, label=below: $v_2$] {};
    \node (v3) at (-3,0) [vertex, label=above: $v_3$] {};
    \node (v4) at (-3,-1) [flood, label=below: $v_4$] {};
    \node (v5) at (-2,0) [vertex, label=above: $v_5$] {};
    \node (v6) at (-2,-1) [flood, label=below: $v_6$] {};
    \node (v7) at (-1,0) [vertex, label=above: $v_7$] {};
    \node (v8) at (-1,-1) [vertex,label=below: $v_8$] {};

    \draw [-] (v1) to (v3);
    \draw [-] (v1) to (v2); 
    \draw [-] (v2) to (v4);
    \draw [-] (v3) to (v5);
    \draw [-] (v3) to (v4);
    \draw [-] (v4) to (v6);
    \draw [-] (v5) to (v6);
    \draw [-] (v6) to (v8);
    \draw [-] (v7) to (v8);
    \draw [-] (v5) to (v7);
\end{tikzpicture}
\end{center} 
We can see that both $v_2$ and $v_3$ have two neighbors in $C$, so $v_2$ and $v_3$ will flood. Therefore $C_1$ = $C$ $\cup$ $\{v_2, v_3\}$ as shown below. 
\begin{center}
\begin{tikzpicture}
    \node (v1) at (-4,0) [flood,  label=above:$v_1$]  {}; 
    \node (v2) at (-4,-1) [flood, label=below: $v_2$] {};
    \node (v3) at (-3,0) [flood, label=above: $v_3$] {};
    \node (v4) at (-3,-1) [flood, label=below: $v_4$] {};
    \node (v5) at (-2,0) [vertex, label=above: $v_5$] {};
    \node (v6) at (-2,-1) [flood, label=below: $v_6$] {};
    \node (v7) at (-1,0) [vertex, label=above: $v_7$] {};
    \node (v8) at (-1,-1) [vertex,label=below: $v_8$] {};

    \draw [-] (v1) to (v3);
    \draw [-] (v1) to (v2); 
    \draw [-] (v2) to (v4);
    \draw [-] (v3) to (v5);
    \draw [-] (v3) to (v4);
    \draw [-] (v4) to (v6);
    \draw [-] (v5) to (v6);
    \draw [-] (v6) to (v8);
    \draw [-] (v7) to (v8);
    \draw [-] (v5) to (v7);
\end{tikzpicture}
\end{center} 
Similarly, we can now see that $v_5$ has two neighbors in $C_1$, namely $v_3$ and $v_6$, so $v_5 \in C_2$.  You can check that no other vertices will flood in this step. Therefore $C_2$ = $C_1$ $\cup$ $\{v_5\}$.
\begin{center}
\begin{tikzpicture}
    \node (v1) at (-4,0) [flood,  label=above:$v_1$]  {}; 
    \node (v2) at (-4,-1) [flood, label=below: $v_2$] {};
    \node (v3) at (-3,0) [flood, label=above: $v_3$] {};
    \node (v4) at (-3,-1) [flood, label=below: $v_4$] {};
    \node (v5) at (-2,0) [flood, label=above: $v_5$] {};
    \node (v6) at (-2,-1) [flood, label=below: $v_6$] {};
    \node (v7) at (-1,0) [vertex, label=above: $v_7$] {};
    \node (v8) at (-1,-1) [vertex,label=below: $v_8$] {};

    \draw [-] (v1) to (v3);
    \draw [-] (v1) to (v2); 
    \draw [-] (v2) to (v4);
    \draw [-] (v3) to (v5);
    \draw [-] (v3) to (v4);
    \draw [-] (v4) to (v6);
    \draw [-] (v5) to (v6);
    \draw [-] (v6) to (v8);
    \draw [-] (v7) to (v8);
    \draw [-] (v5) to (v7);
\end{tikzpicture}
\end{center} 
Since neither $v_7$ nor $v_8$ have two neighbors in $C_2$, no new vertices flood and $C_3$ = $C_2$.  In fact, in this example, for all $k \geq 3$, we have that $C_k = C_2$. 

\end{example}

From the definition of cascade sequence, it follows that if $C_k = C_{k+1}$ for some value $k$, then for all $j \geq k$, $C_k = C_j$, i.e., once two terms in the sequences are equal, all subsequent terms in the sequence are the same set. Similarly, since $G$ is finite and every cascade set is a subset of $V(G)$, there must exist a $k \in \mathbb{N}$ such that for all $j \geq k$, $C_j = C_k$. 
Let $\C$ denote the set to which  the cascade sequence starting with $C$ converges.  Note that if $C'$ is a term in the cascade sequence of $C$, then $\C = \overline{C'}$.  If $\C = V(G)$, then we say that $C$ \emph{completely floods} $G$ and that $C$ is a \emph{flooding cascade set}.  If $\C \neq V(G)$, then we say $C$ is a \emph{non-flooding cascade set}.  If $v \in V(G)-\C$, then $v$ is \emph{not flooded by $C$}.

In Example \ref{cascade set example}, we see that if $C = \{v_1, v_4, v_6\}$, then $\C = \{v_1,v_2,v_3, v_4, v_5, v_6\}$ and both the vertices $v_7$ and $v_8$ are not flooded by $C$.  Since there are some vertices that are not flooded by $C$, this means that $C$ is a non-flooding cascade set.

This leads us to the definition of the flood set.

\begin{defn}
The \emph{flood set} of a graph $G$, denoted $\FG$, is the set of cascade sets that completely flood $G$.
\end{defn}

\begin{example} \label{Flood Set}
The following seven cascade sets form the flood set of the corresponding graph. 
\begin{center}

\begin{tikzpicture}
    \node (v1) at (-4,0) [flood]  {}; 
    \node (v2) at (-4,-1) [flood] {};
    \node (v3) at (-3,0) [flood] {};
    \node (v4) at (-3,-1) [flood] {};

    \draw [-] (v1) to (v3);
    \draw [-] (v1) to (v2); 
    \draw [-] (v2) to (v4);
    \draw [-] (v3) to (v4);
\end{tikzpicture}
\hspace*{\fill}
\begin{tikzpicture}
    \node (v1) at (-8,0) [vertex]  {}; 
    \node (v2) at (-8,-1) [flood] {};
    \node (v3) at (-7,0) [flood] {};
    \node (v4) at (-7,-1) [flood] {};

    \draw [-] (v1) to (v3);
    \draw [-] (v1) to (v2); 
    \draw [-] (v2) to (v4);
    \draw [-] (v3) to (v4);
\end{tikzpicture}
\hspace*{\fill}
\begin{tikzpicture}
    \node (v1) at (-8,0) [flood]  {}; 
    \node (v2) at (-8,-1) [flood] {};
    \node (v3) at (-7,0) [vertex] {};
    \node (v4) at (-7,-1) [flood] {};

    \draw [-] (v1) to (v3);
    \draw [-] (v1) to (v2); 
    \draw [-] (v2) to (v4);
    \draw [-] (v3) to (v4);
\end{tikzpicture}
\hspace*{\fill}
\begin{tikzpicture}
    \node (v1) at (-8,0) [flood]  {}; 
    \node (v2) at (-8,-1) [flood] {};
    \node (v3) at (-7,0) [flood] {};
    \node (v4) at (-7,-1) [vertex] {};

    \draw [-] (v1) to (v3);
    \draw [-] (v1) to (v2); 
    \draw [-] (v2) to (v4);
    \draw [-] (v3) to (v4);
\end{tikzpicture}
\hspace*{\fill}
\begin{tikzpicture}
    \node (v1) at (-8,0) [flood]  {}; 
    \node (v2) at (-8,-1) [vertex] {};
    \node (v3) at (-7,0) [flood] {};
    \node (v4) at (-7,-1) [flood] {};

    \draw [-] (v1) to (v3);
    \draw [-] (v1) to (v2); 
    \draw [-] (v2) to (v4);
    \draw [-] (v3) to (v4);
\end{tikzpicture}
\hspace*{\fill}
\begin{tikzpicture}
    \node (v1) at (-8,0) [vertex]  {}; 
    \node (v2) at (-8,-1) [flood] {};
    \node (v3) at (-7,0) [flood] {};
    \node (v4) at (-7,-1) [vertex] {};

    \draw [-] (v1) to (v3);
    \draw [-] (v1) to (v2); 
    \draw [-] (v2) to (v4);
    \draw [-] (v3) to (v4);
\end{tikzpicture}
\hspace*{\fill}
\begin{tikzpicture}
    \node (v1) at (-8,0) [flood]  {}; 
    \node (v2) at (-8,-1) [vertex] {};
    \node (v3) at (-7,0) [vertex] {};
    \node (v4) at (-7,-1) [flood] {};

    \draw [-] (v1) to (v3);
    \draw [-] (v1) to (v2); 
    \draw [-] (v2) to (v4);
    \draw [-] (v3) to (v4);
\end{tikzpicture}
\end{center} 


\end{example}


Before defining the flood polynomial, which is the remaining focus of the paper, we prove some basic results about cascade sets.

\begin{prop}
    If $C$ is a cascade set of $G$ and $|C| = 1$, then $C \in \FG$ if and only if $G$ has a single vertex.
\end{prop}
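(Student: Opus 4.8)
The plan is to prove both directions of the biconditional directly from the definition of the cascade sequence. The key observation is that flooding requires every newly flooded vertex to have \emph{at least two} neighbors already in the cascade set, so a singleton can never trigger any new vertex to flood.

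First I would handle the easy direction: if $G$ has a single vertex $v$, then the only cascade set of size $1$ is $C = \{v\} = V(G)$, so trivially $\overline{C} = V(G)$ and $C \in \FG$.

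For the contrapositive of the forward direction, suppose $G$ has at least two vertices and let $C = \{v\}$ for some $v \in V(G)$. I claim the cascade sequence is constant: $C_0 = C_1 = \cdots = \{v\}$. Indeed, $C_0 = C$ has exactly one element, so no vertex $x \in V$ can have two neighbors in $C_0$ (it can have at most one, namely $v$, and only if $xv \in E$). Hence $C_1 = C_0$, and by the remark in the excerpt that once two consecutive terms agree the sequence stabilizes, $C_k = \{v\}$ for all $k$. Therefore $\overline{C} = \{v\} \neq V(G)$ since $|V(G)| \geq 2$, so $C$ is a non-flooding cascade set and $C \notin \FG$. This establishes that $C \in \FG$ implies $|V(G)| = 1$.

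There is no real obstacle here; the argument is essentially immediate from the definition. The only thing worth being careful about is the edge case where $G$ has a single vertex but one might worry about whether that vertex could have ``neighbors'' — it cannot, since $G$ is simple and has no loops, so the definition of the cascade sequence applies cleanly and the singleton already equals $V(G)$.
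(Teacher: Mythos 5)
Your proof is correct and follows essentially the same route as the paper's: both note that a one-element set cannot give any vertex two neighbors, so the cascade sequence stabilizes immediately at $C \neq V(G)$ when $|V(G)| \geq 2$, while the single-vertex case is trivial.
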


\begin{proof}
     Suppose that $C$ is a one-element cascade set and $|G| > 1$. 
     Therefore, $V - C$ is non-empty and contains no element that has two neighbors in $C$. This means $C = C_1$ in the cascade sequence, hence $C = \overline{C} \neq V$. So $C \notin \FG$. 
    
    Now suppose $|G| = 1$ and $C$ is a one-element cascade set. Since $C$ is a subset of $V$ and $C$ and $V$ have the same number of elements, they must be equal. Hence $C \in \FG$, as desired. 
\end{proof}

\begin{prop} \label{flooding superset}
     If $C \in \FG$ and $C \subseteq C'$, then $C' \in \FG$.
\end{prop}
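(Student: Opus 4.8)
The plan is to exploit the fact that the cascade operation is monotone with respect to inclusion of the starting set: a larger initial cascade set can only ever flood more vertices. Concretely, I would first establish the following claim: if $A \subseteq B$ are cascade sets of $G$, with cascade sequences $A_0, A_1, \dots$ and $B_0, B_1, \dots$, then $A_k \subseteq B_k$ for every $k \geq 0$.

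The claim is proved by induction on $k$. For $k = 0$ we have $A_0 = A \subseteq B = B_0$ by hypothesis. For the inductive step, suppose $A_{k-1} \subseteq B_{k-1}$. By definition, $A_k$ consists of $A_{k-1}$ together with every vertex $x \in V$ having at least two neighbors in $A_{k-1}$. If such an $x$ has two neighbors in $A_{k-1}$, then since $A_{k-1} \subseteq B_{k-1}$ those same two neighbors lie in $B_{k-1}$, so $x$ has at least two neighbors in $B_{k-1}$ and hence $x \in B_k$. Combining this with $A_{k-1} \subseteq B_{k-1} \subseteq B_k$ gives $A_k \subseteq B_k$, completing the induction.

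Now I would apply the claim with $A = C$ and $B = C'$. As noted in the discussion preceding the statement, the cascade sequence of any cascade set stabilizes after finitely many steps, and once two consecutive terms agree all later terms agree; choose $k$ large enough that $C_k = \overline{C}$ and $C'_k = \overline{C'}$ simultaneously (e.g.\ the maximum of the two stabilization indices). Since $C \in \FG$ we have $\overline{C} = V(G)$, so
\[
V(G) = \overline{C} = C_k \subseteq C'_k = \overline{C'} \subseteq V(G),
\]
which forces $\overline{C'} = V(G)$. Hence $C' \in \FG$, as desired.

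There is no serious obstacle here: the entire argument rests on the monotonicity claim, whose proof is a routine induction. The only points requiring a little care are the inductive step, where one observes that the ``at least two neighbors'' condition is preserved under enlarging the set in which neighbors are counted, and the bookkeeping of selecting a single index $k$ at which both cascade sequences have already converged.
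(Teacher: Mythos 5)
Your proof is correct, and it takes a genuinely different route from the paper's. You establish a general monotonicity lemma: if $A \subseteq B$ then $A_k \subseteq B_k$ for every $k$, by induction on the cascade step index, and then compare the stabilized sets to conclude $\overline{C'} = V(G)$. The paper instead argues by contradiction with an extremal choice: it selects a largest flooding cascade set $C$ admitting a non-flooding superset $C'$, observes that $|C_1| > |C|$ so every superset of $C_1$ floods by maximality, and then shows $C_1 \subseteq C_1'$ to reach a contradiction -- in effect it only compares the \emph{first} step of the two cascade sequences and lets the maximality hypothesis absorb the rest of the induction. Your version makes the underlying mechanism explicit and yields a reusable statement ($A \subseteq B$ implies $\overline{A} \subseteq \overline{B}$) that is strictly more informative than the proposition itself, at the cost of a small amount of bookkeeping about choosing a common stabilization index $k$; the paper's version is shorter to set up but less transparent, since the monotonicity is hidden inside the extremal argument. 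Both are complete proofs.
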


\begin{proof}
    For contradiction, let $C$ be the set with the most elements such that $C \in \FG$ and there exists $C' \supseteq C$ with $C' \notin \FG$.  Note that for this to be possible, $|C| < n$.

    Since $C \in \FG$ and $C \neq V$, it follows that $|C_1| > |C|$. Since $C$ was picked to be the set with the most elements with the desired property, any superset of $C_1$ necessarily floods $G$.  We will now show that $C_1'$ is a superset of $C_1$. 

    Let $v \in C_1$.  If $v \in C$, then $v \in C'$.  If $v \notin C$, then $v$ has two neighbors that are in $C$.  Since $C' \supseteq C$, this means that $v$ has two neighbors that are in $C'$.  Therefore $v \in C_1'$ and $C_1' \in \FG$ and hence $C' \in \FG$.  This is a contradiction.  Therefore if $C \in \FG$ and $C \subseteq C'$, then $C' \in \FG$ as desired. 
\end{proof}

We say that a flooding cascade set $C \in \FG$ is \emph{minimal} if for all $K \in \FG$, $K \subseteq C$ implies $K = C$.  That is to say, if $C$ is a minimal flooding cascade set, then no proper subset of $C$ is a flooding cascade subset.  Note that a graph can have minimal flooding cascade sets of different sizes. For example, consider the path graph with five vertices.  This graph has two minimal flooding cascade sets, one with three elements and one with four elements as we see in the following example.

\begin{example}
The two minimal flooding sets for the path graph with five vertices are shown below.  Notice that they are sets of different sizes.

\begin{center}
\hspace*{\fill}
\begin{tikzpicture}
    \node (v1) at (0,0) [flood]  {}; 
    \node (v2) at (1,0) [vertex] {};
    \node (v3) at (2,0) [flood] {};
    \node (v4) at (3,0) [vertex] {};
    \node (v5) at (4,0) [flood] {};

    \draw [-] (v1) to (v2);
    \draw [-] (v2) to (v3); 
    \draw [-] (v3) to (v4);
    \draw [-] (v4) to (v5);
\end{tikzpicture}
\hspace*{\fill}
\begin{tikzpicture}
    \node (v1) at (0,0) [flood]  {}; 
    \node (v2) at (1,0) [flood] {};
    \node (v3) at (2,0) [vertex] {};
    \node (v4) at (3,0) [flood] {};
    \node (v5) at (4,0) [flood] {};

    \draw [-] (v1) to (v2);
    \draw [-] (v2) to (v3); 
    \draw [-] (v3) to (v4);
    \draw [-] (v4) to (v5);
\end{tikzpicture}
\hspace*{\fill}
\end{center}
We can see that no proper subset of either of these cascade sets will flood the graph. 
\end{example}


There is no relation between the number of
vertices in a graph and the size of its minimal flooding sets. 
We will see in the following example, for all $n \geq 2$, there are graphs that have two-element flooding cascade sets.  Let $K_n$ denote the complete graph with $n$ vertices and let $C$ be any two vertices of $K_n$.
Since every vertex that's not in $C$ is neighbors with both elements of $C$, $C_1$ contains every vertex.

\begin{example} Consider $K_{6}$ shown below.

\begin{center}
\begin{tikzpicture}
    \node (v1) at (1,0) [flood]  {}; 
    \node (v2) at (.5,.866) [flood] {};
    \node (v3) at (-.5,.866) [vertex] {};
    \node (v4) at (-1,0) [vertex] {};
    \node (v5) at (-.5,-.866) [vertex] {};
    \node (v6) at (.5,-.866) [vertex] {};

    \draw [-] (v1) to (v2);
    \draw [-] (v2) to (v3); 
    \draw [-] (v3) to (v4);
    \draw [-] (v4) to (v5);
    \draw [-] (v5) to (v6);
    \draw [-] (v6) to (v1);
    \draw [-] (v1) to (v3); 
    \draw [-] (v1) to (v4);
    \draw [-] (v1) to (v5);
    \draw [-] (v2) to (v4);
    \draw [-] (v2) to (v5);
    \draw [-] (v2) to (v6);
    \draw [-] (v3) to (v5);
    \draw [-] (v3) to (v6);
    \draw [-] (v6) to (v4);
\end{tikzpicture}
\end{center}
We can see that every unflooded vertex is neighbors with both elements of the cascade set, so all unflooded vertices in $K_6$ will immediately  flood.

\end{example}

The \emph{diameter} of a graph is the 
the length of the shortest path between the most distanced vertices.
In the case of the complete graph, the diameter is $1$ since every pair of vertices share an edge.  The example below illustrates that a graph with a two-element flooding cascade set can have an arbitrarily large diameter.

\begin{example} \label{diameter}
The graph $T_n$ (see Section \ref{triangle graph} for a discussion of this family of graphs) has a diameter of $\lfloor \frac{n}{2} \rfloor$, 
but has a two-element flooding cascade set, $C = \{v_1, v_2\}$. 
\begin{center}
\begin{tikzpicture}
\node (v1) at (-2, -1) [flood, label=below: $v_1$] {}; 
\node (v2) at (-1.5, 0) [flood, label=above: $v_2$] {};
\node (v3) at (-1, -1) [vertex, label=below: $v_3$] {};
\node (v4) at (-.5, 0) [vertex, label=above: $v_4$] {};
\node (v5) at (1, -1) [vertex, label=below: $v_{n-2}$] {};
\node (v6) at (1.5, 0) [vertex, label=above: $v_{n-1}$] {};
\node (v7) at (2, -1) [vertex, label=below: $v_n$] {}; 

\node at (-2.5, -.5) {$C:$}; 
\node at (.2, -.5) {$\dots$}; 
\node at (.2, -.5) {$\dots$}; 

\draw[-] (v1) to (v2); 
\draw[-] (v1) to (v3); 
\draw[-] (v2) to (v3); 
\draw[-] (v2) to (v4); 
\draw[-] (v3) to (v4); 
\draw[dashed, -] (v4) to (-.33, -.33);
\draw[dashed, -] (.83, -.66) to (v5); 
\draw[dashed, -] (v4) to (-.125, 0);
\draw[dashed, -] (1.125, 0) to (v6) ;
\draw[dashed, -] (v3) to (-.66, -1); 
\draw[dashed, -] (.67, -1) to (v5); 
\draw[-] (v5) to (v6); 
\draw[-] (v6) to (v7); 
\draw[-] (v5) to (v7); 
\end{tikzpicture}
\end{center}
We can see that $v_3$ is neighbors with both $v_1$ and $v_2$, so $v_3 \in C_1$.  Similarly, $v_4$ is an element of  $C_2$, and eventually, $v_n \in C_{n-2}.$

\begin{center}
\hspace*{\fill}
\begin{tikzpicture}
\node (v1) at (-2, -1) [flood, label = below: $v_1$] {};
\node (v2) at (-1.5, 0) [flood, label=above: $v_2$] {};
\node (v3) at (-1, -1) [flood, label=below: $v_3$] {};
\node (v4) at (-.5, 0) [vertex, label=above: $v_4$] {};
\node (v5) at (1, -1) [vertex, label=below: $v_{n-2}$] {};
\node (v6) at (1.5, 0) [vertex, label=above: $v_{n-1}$] {};
\node (v7) at (2, -1) [vertex, label=below: $v_n$] {}; 

\node at (-2.5, -.5) {$C_1:$}; 
\node at (2.5, -.5) {$\rightarrow \cdots$};

\node at (.2, -.5) {$\dots$}; 

\draw[-] (v1) to (v2); 
\draw[-] (v1) to (v3); 
\draw[-] (v2) to (v3); 
\draw[-] (v2) to (v4); 
\draw[-] (v3) to (v4); 
\draw[dashed, -] (v4) to (-.33, -.33);
\draw[dashed, -] (.83, -.66) to (v5); 
\draw[dashed, -] (v4) to (-.125, 0);
\draw[dashed, -] (1.125, 0) to (v6) ;
\draw[dashed, -] (v3) to (-.66, -1); 
\draw[dashed, -] (.67, -1) to (v5); 
\draw[-] (v5) to (v6); 
\draw[-] (v6) to (v7); 
\draw[-] (v5) to (v7); 
\end{tikzpicture}
\hspace*{\fill}
\begin{tikzpicture}
\node (v1) at (-2, -1) [flood, label=below: $v_1$] {}; 
\node (v2) at (-1.5, 0) [flood, label=above: $v_2$] {};
\node (v3) at (-1, -1) [flood, label=below: $v_3$] {};
\node (v4) at (-.5, 0) [flood, label=above: $v_4$] {};
\node (v5) at (1, -1) [flood, label=below: $v_{n-2}$] {};
\node (v6) at (1.5, 0) [flood, label=above: $v_{n-1}$] {};
\node (v7) at (2, -1) [vertex, label=below: $v_n$] {}; 

\node at (-2.5, -.5) {$C_{n-3}:$};

\node at (.2, -.5) {$\dots$}; 
\node at (.2, -.5) {$\dots$}; 

\draw[-] (v1) to (v2); 
\draw[-] (v1) to (v3); 
\draw[-] (v2) to (v3); 
\draw[-] (v2) to (v4); 
\draw[-] (v3) to (v4); 
\draw[dashed, -] (v4) to (-.33, -.33);
\draw[dashed, -] (.83, -.66) to (v5); 
\draw[dashed, -] (v4) to (-.125, 0);
\draw[dashed, -] (1.125, 0) to (v6) ;
\draw[dashed, -] (v3) to (-.66, -1); 
\draw[dashed, -] (.67, -1) to (v5); 
\draw[-] (v5) to (v6); 
\draw[-] (v6) to (v7); 
\draw[-] (v5) to (v7); 
\end{tikzpicture}
\hspace*{\fill}
\end{center}

We give a general classification of the flood set of $T_n$ in Lemma \ref{distance of 4 condition}.
\end{example}

\subsection{Flood Polynomial}

Now that we have established some basic properties of flooding cascade sets, we introduce our main area of study.

\begin{defn}\label{Def Flood Polynomial}
   The \emph{flood polynomial} of a graph $G$, denoted by $\F$, is defined by 
   \[ \F = \sum_{C \in \FG}x^{|C|}.\]
\end{defn}

Since all flooding cascade sets are subsets of the vertices of $G$, it follows that for all $k > n$, the coefficient of $x^k$ in $\F$ is 0. Therefore $\F$ is indeed a polynomial and its degree is at most $n$.  In fact we will see that the degree of $\F$ is equal to $n$ (see Proposition \ref{size of graph}).  We can write $\F$ as $\displaystyle \F = \sum_{k=0}^n c_kx^k$, where $c_k$ is the number of $k$-element flooding cascade sets of $G$.  The following result follows directly from the fact that flooding cascade sets are subsets of the vertices of $G$.

\begin{prop} \label{coefficients}
    If $G$ is a graph with $n$ vertices and \\$\displaystyle \F = \sum_{k=0}^n c_kx^k$, then $0 \leq c_k \leq \binom{n}{k}$.
\end{prop}


\begin{example}
 Let $G$ be the following graph.
\begin{center}
 \begin{tikzpicture}
    \node (v1) at (0,0) [vertex] {}; 
    \node (v2) at (1,0) [vertex] {};
    \node (v3) at (1,-1) [vertex] {};
    \node (v4) at (0,-1) [vertex] {};

    \draw [-] (v1) to (v2);
    \draw [-] (v2) to (v3); 
    \draw [-] (v3) to (v4);
    \draw [-] (v4) to (v1);
\end{tikzpicture}
\end{center}


It follows from the flood set shown in Example \ref{Flood Set}, that $\F = x^4 + 4x^3 + 2x^2.$
 
\end{example}

\begin{prop} \label{disconnected}
    If $G$ is the disjoint union of graphs $H$ and $K$, i.e. $G = H \oplus K$, then $\F = F_H(x)\cdot F_K(x)$.
\end{prop}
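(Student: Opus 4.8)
The plan is to show that the flooding cascade sets of $G = H \oplus K$ are precisely the sets of the form $C_H \cup C_K$ where $C_H \in \mathcal{F}(H)$ and $C_K \in \mathcal{F}(K)$, and then observe that this bijection is compatible with the weight $x^{|C|}$ since $|C_H \cup C_K| = |C_H| + |C_K|$. Summing over all such pairs then gives $\F = \sum_{C_H \in \mathcal{F}(H)} \sum_{C_K \in \mathcal{F}(K)} x^{|C_H| + |C_K|} = \left(\sum_{C_H} x^{|C_H|}\right)\left(\sum_{C_K} x^{|C_K|}\right) = F_H(x) \cdot F_K(x)$.

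The key structural fact to establish is that flooding in $G$ happens independently in the two components. First I would observe that since $H$ and $K$ share no edges, every vertex of $H$ has all of its neighbors in $H$, and likewise for $K$. Then, for any cascade set $C \subseteq V(G)$, write $C = C_H \cup C_K$ with $C_H = C \cap V(H)$ and $C_K = C \cap V(K)$. I would prove by induction on $k$ that the $k$-th term of the cascade sequence of $C$ in $G$ decomposes as $C_k = (C_H)_k \cup (C_K)_k$, where $(C_H)_k$ and $(C_K)_k$ denote the $k$-th terms of the cascade sequences of $C_H$ in $H$ and $C_K$ in $K$ respectively. The base case is immediate from the definition, and the inductive step follows because a vertex $x \in V(H)$ has two neighbors in $C_{k-1}$ if and only if it has two neighbors in $(C_H)_{k-1}$ (its neighbors all lying in $H$), and symmetrically for $V(K)$. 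Taking the limit gives $\overline{C} = \overline{C_H} \cup \overline{C_K}$, so $\overline{C} = V(G) = V(H) \cup V(K)$ if and only if $\overline{C_H} = V(H)$ and $\overline{C_K} = V(K)$, i.e. $C \in \FG$ iff $C_H \in \mathcal{F}(H)$ and $C_K \in \mathcal{F}(K)$.

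With this equivalence in hand, the map $C \mapsto (C \cap V(H), C \cap V(K))$ is a bijection from $\FG$ onto $\mathcal{F}(H) \times \mathcal{F}(K)$, with inverse $(C_H, C_K) \mapsto C_H \cup C_K$ (this is a union of disjoint sets since $V(H) \cap V(K) = \varnothing$). The weight computation is then the routine factorization of the generating function shown above. I expect the main obstacle — really the only nontrivial point — to be writing the inductive decomposition $C_k = (C_H)_k \cup (C_K)_k$ cleanly; everything else is bookkeeping. One should be slightly careful to note that the cascade sequences in $H$ and in $K$ may stabilize at different steps, but since once a cascade sequence is constant it stays constant, taking $k$ large enough handles both components simultaneously.
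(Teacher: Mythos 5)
Your proposal is correct and follows essentially the same route as the paper: decompose each cascade set into its restrictions to $H$ and $K$, observe that flooding proceeds independently in the two components since there are no edges between them, and factor the resulting generating function. Your inductive decomposition of the cascade sequence simply makes explicit a step the paper leaves implicit.
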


\begin{proof}
Suppose $C$ is a cascade set of $G$ and let $C|_H$ be the elements of $C$ that are in $H$.  Similarly, let $C|_K$ be the elements of $C$ that are in $K$.  Since there are no edges between vertices in $H$ and vertices in $K$, then $C \in \FG$ if and only if $C|_H \in \mathcal{F}(H)$ and $C|_K\in \mathcal{F}(K)$.
%
\end{proof}

\section{Properties determined by the flood polynomial}

In this section, we discuss graph properties that can be determined by its flood polynomial.  We state necessary conditions for two graphs to have the same flood polynomials and some properties that cannot be determined by the flood polynomial.  The first observation follows directly from the fact that the vertex set of a graph is a maximum size flooding cascade set.

\begin{prop} \label{numvertices}
    The number of vertices of $G$ is determined by $\F$.
\end{prop}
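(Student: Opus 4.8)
The plan is to show that the number of vertices $n$ of a graph $G$ is recoverable from its flood polynomial $\F$, and the natural route is to identify $n$ with the degree of the polynomial. As noted in the excerpt immediately before Proposition \ref{coefficients}, the degree of $\F$ is at most $n$, since every flooding cascade set is a subset of $V(G)$ and so has at most $n$ elements. It therefore suffices to prove that the coefficient $c_n$ of $x^n$ is nonzero, i.e.\ that $V(G)$ itself is a flooding cascade set; then $\deg \F = n$, and since $n = \deg\F$ is clearly determined by $\F$, we are done.

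First I would observe that the full vertex set $V = V(G)$ is a cascade set. Running the cascade sequence from $C = V$, we have $C_0 = V$, and since $C_k \subseteq V$ for all $k$ and $C_k \supseteq C_0 = V$, we get $C_k = V$ for every $k$. Hence $\overline{C} = V$, so by definition $V \in \FG$. This is exactly the observation flagged in the sentence preceding the statement ("the vertex set of a graph is a maximum size flooding cascade set"). Consequently $c_n \geq 1$, so $c_n \neq 0$ and $\deg \F = n$.

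Then I would close the argument: given only the polynomial $\F$ (without any further knowledge of $G$), one computes its degree, which by the above equals $n = |V(G)|$. Thus any two graphs with the same flood polynomial have the same number of vertices, and more to the point, $|V(G)|$ is a function of $\F$ alone.

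There is essentially no obstacle here — the only thing to be careful about is the degenerate edge case $n = 0$ (the empty graph), where $\F$ would be the empty sum; if the paper intends graphs to be nonempty this is not an issue, and in any case for $n \geq 1$ the argument above is complete. The real content is the already-established fact that $V(G) \in \FG$, so this proposition is a short corollary of that remark together with the degree bound preceding Proposition \ref{coefficients}.
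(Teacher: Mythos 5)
Your argument is correct and matches the paper's: the paper proves this exact statement by the remark that $V(G)$ is itself a (maximum size) flooding cascade set, so $\deg \F = n$ and the number of vertices is read off as the degree of the polynomial. Your write-up just makes that one-line observation explicit.
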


The next property follows immediately from Definition \ref{Def Flood Polynomial}.

\begin{prop} \label{size of graph}
    The size of $\FG$ determined by $\F$.
\end{prop}

\begin{proof}
    It follows from the definition of $\F$ that $|\FG| = F_G(1)$.
\end{proof}

While it may be reasonable to expect that the number of minimal flooding cascade sets of $G$ may be determined by $\FG$, the following example demonstrates this not to be the case.  Additionally, the number of elements in each minimal flooding cascade sets cannot be determined by $\FG$.

\begin{example} \label{minimal flooding cascade sets example}
We will see in Section \ref{Path graphs with an even number of vertices} that the following graphs both have 
\\ $x^8+6x^7+10x^6+4x^5$
as their flood polynomial,
but not the same number of minimal flooding cascade sets.  

\vspace{.5 cm}
\begin{center}
\begin{tikzpicture}
\node at (-1, 0) {$P_8:$};
\node (v1) at ( 0,0) [vertex] {};
\node (v2) at ( 1,0) [vertex] {};
\node (v3) at ( 2,0) [vertex] {};
\node (v4) at ( 3,0) [vertex] {};
\node (v5) at ( 4,0) [vertex] {};
\node (v6) at ( 5,0) [vertex] {};
\node (v7) at ( 6,0) [vertex] {};
\node (v8) at ( 7,0) [vertex] {};
\draw [-] (v1) to (v2);
\draw [-] (v2) to (v3);
\draw [-] (v3) to (v4);
\draw [-] (v4) to (v5);
\draw [-] (v5) to (v6);
\draw [-] (v6) to (v7);
\draw [-] (v7) to (v8);
\end{tikzpicture}
\end{center}
\vspace{.5 cm}
\begin{center}
\begin{tikzpicture}
 \node at (-1.5, .5) {$P_4 \oplus O_4:$};
\node (v1) at ( 0,.5) [vertex] {};
\node (v2) at ( 1,.5) [vertex] {};
\node (v3) at ( 2,.5) [vertex] {};
\node (v4) at ( 3,.5) [vertex] {};
\node (v5) at ( 4,0) [vertex] {};
\node (v6) at ( 5,0) [vertex] {};
\node (v7) at ( 4,1) [vertex] {};
\node (v8) at ( 5,1) [vertex] {};
\draw [-] (v1) to (v2);
\draw [-] (v2) to (v3);
\draw [-] (v3) to (v4);
\draw [-] (v5) to (v7);
\draw [-] (v5) to (v6);
\draw [-] (v6) to (v8);
\draw [-] (v7) to (v8);
\end{tikzpicture}
\end{center}

The five minimal flooding cascade sets of $P_8$ are:

\begin{center}
\begin{tikzpicture}
\node (v1) at ( 0,0) [flood] {};
\node (v2) at ( 1,0) [flood] {};
\node (v3) at ( 2,0) [vertex] {};
\node (v4) at ( 3,0) [flood] {};
\node (v5) at ( 4,0) [flood] {};
\node (v6) at ( 5,0) [vertex] {};
\node (v7) at ( 6,0) [flood] {};
\node (v8) at ( 7,0) [flood] {};
\draw [-] (v1) to (v2);
\draw [-] (v2) to (v3);
\draw [-] (v3) to (v4);
\draw [-] (v4) to (v5);
\draw [-] (v5) to (v6);
\draw [-] (v6) to (v7);
\draw [-] (v7) to (v8);
\end{tikzpicture}

\vspace{.25 cm}

\begin{tikzpicture}
\node (v1) at ( 0,0) [flood] {};
\node (v2) at ( 1,0) [vertex] {};
\node (v3) at ( 2,0) [flood] {};
\node (v4) at ( 3,0) [vertex] {};
\node (v5) at ( 4,0) [flood] {};
\node (v6) at ( 5,0) [vertex] {};
\node (v7) at ( 6,0) [flood] {};
\node (v8) at ( 7,0) [flood] {};
\draw [-] (v1) to (v2);
\draw [-] (v2) to (v3);
\draw [-] (v3) to (v4);
\draw [-] (v4) to (v5);
\draw [-] (v5) to (v6);
\draw [-] (v6) to (v7);
\draw [-] (v7) to (v8);
\end{tikzpicture}

\vspace{.25 cm}

\begin{tikzpicture}
\node (v1) at ( 0,0) [flood] {};
\node (v2) at ( 1,0) [flood] {};
\node (v3) at ( 2,0) [vertex] {};
\node (v4) at ( 3,0) [flood] {};
\node (v5) at ( 4,0) [vertex] {};
\node (v6) at ( 5,0) [flood] {};
\node (v7) at ( 6,0) [vertex] {};
\node (v8) at ( 7,0) [flood] {};
\draw [-] (v1) to (v2);
\draw [-] (v2) to (v3);
\draw [-] (v3) to (v4);
\draw [-] (v4) to (v5);
\draw [-] (v5) to (v6);
\draw [-] (v6) to (v7);
\draw [-] (v7) to (v8);
\end{tikzpicture}

\vspace{.25 cm}

\begin{tikzpicture}
\node (v1) at ( 0,0) [flood] {};
\node (v2) at ( 1,0) [vertex] {};
\node (v3) at ( 2,0) [flood] {};
\node (v4) at ( 3,0) [vertex] {};
\node (v5) at ( 4,0) [flood] {};
\node (v6) at ( 5,0) [flood] {};
\node (v7) at ( 6,0) [vertex] {};
\node (v8) at ( 7,0) [flood] {};
\draw [-] (v1) to (v2);
\draw [-] (v2) to (v3);
\draw [-] (v3) to (v4);
\draw [-] (v4) to (v5);
\draw [-] (v5) to (v6);
\draw [-] (v6) to (v7);
\draw [-] (v7) to (v8);
\end{tikzpicture}

\vspace{.25 cm}

\begin{tikzpicture}
\node (v1) at ( 0,0) [flood] {};
\node (v2) at ( 1,0) [vertex] {};
\node (v3) at ( 2,0) [flood] {};
\node (v4) at ( 3,0) [flood] {};
\node (v5) at ( 4,0) [vertex] {};
\node (v6) at ( 5,0) [flood] {};
\node (v7) at ( 6,0) [vertex] {};
\node (v8) at ( 7,0) [flood] {};
\draw [-] (v1) to (v2);
\draw [-] (v2) to (v3);
\draw [-] (v3) to (v4);
\draw [-] (v4) to (v5);
\draw [-] (v5) to (v6);
\draw [-] (v6) to (v7);
\draw [-] (v7) to (v8);
\end{tikzpicture}

\end{center}

The four minimal flooding cascade sets of $P_4 \oplus O_4$ are:

\begin{center}
\begin{tikzpicture}
\node (v1) at ( 0,.5) [flood] {};
\node (v2) at ( 1,.5) [flood] {};
\node (v3) at ( 2,.5) [vertex] {};
\node (v4) at ( 3,.5) [flood] {};
\node (v5) at ( 4,0) [flood] {};
\node (v6) at ( 5,0) [vertex] {};
\node (v7) at ( 4,1) [vertex] {};
\node (v8) at ( 5,1) [flood] {};
\draw [-] (v1) to (v2);
\draw [-] (v2) to (v3);
\draw [-] (v3) to (v4);
\draw [-] (v5) to (v7);
\draw [-] (v5) to (v6);
\draw [-] (v6) to (v8);
\draw [-] (v7) to (v8);
\end{tikzpicture}

\vspace{.25 cm}

\begin{tikzpicture}
\node (v1) at ( 0,.5) [flood] {};
\node (v2) at ( 1,.5) [flood] {};
\node (v3) at ( 2,.5) [vertex] {};
\node (v4) at ( 3,.5) [flood] {};
\node (v5) at ( 4,0) [vertex] {};
\node (v6) at ( 5,0) [flood] {};
\node (v7) at ( 4,1) [flood] {};
\node (v8) at ( 5,1) [vertex] {};
\draw [-] (v1) to (v2);
\draw [-] (v2) to (v3);
\draw [-] (v3) to (v4);
\draw [-] (v5) to (v7);
\draw [-] (v5) to (v6);
\draw [-] (v6) to (v8);
\draw [-] (v7) to (v8);
\end{tikzpicture}

\vspace{.25 cm}
\begin{tikzpicture}
\node (v1) at ( 0,.5) [flood] {};
\node (v2) at ( 1,.5) [vertex] {};
\node (v3) at ( 2,.5) [flood] {};
\node (v4) at ( 3,.5) [flood] {};
\node (v5) at ( 4,0) [flood] {};
\node (v6) at ( 5,0) [vertex] {};
\node (v7) at ( 4,1) [vertex] {};
\node (v8) at ( 5,1) [flood] {};
\draw [-] (v1) to (v2);
\draw [-] (v2) to (v3);
\draw [-] (v3) to (v4);
\draw [-] (v5) to (v7);
\draw [-] (v5) to (v6);
\draw [-] (v6) to (v8);
\draw [-] (v7) to (v8);
\end{tikzpicture}

\vspace{.25 cm}
\begin{tikzpicture}
\node (v1) at ( 0,.5) [flood] {};
\node (v2) at ( 1,.5) [vertex] {};
\node (v3) at ( 2,.5) [flood] {};
\node (v4) at ( 3,.5) [flood] {};
\node (v5) at ( 4,0) [vertex] {};
\node (v6) at ( 5,0) [flood] {};
\node (v7) at ( 4,1) [flood] {};
\node (v8) at ( 5,1) [vertex] {};
\draw [-] (v1) to (v2);
\draw [-] (v2) to (v3);
\draw [-] (v3) to (v4);
\draw [-] (v5) to (v7);
\draw [-] (v5) to (v6);
\draw [-] (v6) to (v8);
\draw [-] (v7) to (v8);
\end{tikzpicture}

\end{center}

\end{example}

While the flood polynomial of a disconnected graph is the product of the flood polynomials of each component of the graph (see Proposition \ref{disconnected}), the previous example illustrates that the number of components of a graph is not determined by the flood polynomial. In fact, the following example gives the smallest case in which two graphs have the same flood polynomial, but different numbers of components.

\begin{example} \label{2 element graphs}
The following graphs have a flood polynomial equal to $x^2$, but different numbers of components.

\begin{center}
\begin{tikzpicture}
 [auto, vertex/.style={circle,draw=black!100,thin,inner sep=0pt,minimum size=1.5mm}, flood/.style={circle,draw=black!100,fill=cyan!100, thin,inner sep=0pt,minimum size=1.5mm}]
\node at (-1, 0) {$P_2:$};
\node (v1) at ( 0,0) [vertex] {};
\node (v2) at ( 1,0) [vertex] {};
\draw [-] (v1) to (v2);

\node at (3, 0) {$P_1 \oplus P_1:$};
\node (v1) at (4.5,0) [vertex] {};
\node (v2) at ( 5.5,0) [vertex] {};

\end{tikzpicture}
\end{center}


\end{example}

In Section \ref{Graphs with the Same Flood Polynomial} we will give families of connected graphs who share flood polynomials with disconnected graphs.

\subsection{Isolated points, leaves, and triggers} \label{leaves and triggers}

A vertex is called an \emph{isolated point} if it has no neighbors.  It is called a \emph{leaf} if it has exactly one neighbor.  The total number of isolated points and leaves in the each of the graphs in Example \ref{2 element graphs} is two. The result below demonstrates that graphs with the same flood polynomials also have the same number of leaves and isolated points.

\begin{thm} \label{numleaves}
 If $\displaystyle \F = \sum_{k=0}^n c_kx^k$, then the total number of isolated points and leaves is $n - c_{n-1}$.
\end{thm}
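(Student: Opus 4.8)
The claim is that $c_{n-1}$, the number of flooding cascade sets of size $n-1$, equals $n$ minus the number of "small-degree" vertices (isolated points and leaves). A size-$(n-1)$ cascade set is just $V - \{v\}$ for a single vertex $v$, so there are exactly $n$ such sets, one for each vertex. The plan is therefore to count, among these $n$ sets, exactly how many fail to flood: I will show that $V - \{v\}$ fails to flood $G$ if and only if $v$ is an isolated point or a leaf. Granting this equivalence, the number of size-$(n-1)$ sets that \emph{do} flood is $n$ minus the number of isolated points and leaves, which is precisely the assertion $c_{n-1} = n - (\text{number of isolated points and leaves})$, rearranged as stated.

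**The key step.**

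The heart of the argument is the two-directional equivalence "$V - \{v\}$ floods $\iff \deg(v) \ge 2$." For one direction: if $\deg(v) \ge 2$, then $v$ has at least two neighbors in $V - \{v\}$ (all of $v$'s neighbors lie in $V-\{v\}$ since $G$ is simple and loopless), so $v$ is added at the very first step of the cascade sequence, giving $C_1 = V$; hence $V - \{v\}$ is a flooding cascade set. For the other direction: if $\deg(v) \le 1$, then $v$ has at most one neighbor in $V - \{v\}$, so $v$ is never added to any term of the cascade sequence (the cascade of $V - \{v\}$ stabilizes immediately at $\overline{C} = V - \{v\} \ne V$); hence $V - \{v\}$ does not flood. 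Combining, $V - \{v\} \in \FG$ exactly when $\deg(v) \ge 2$, so the number of non-flooding size-$(n-1)$ sets equals the number of vertices of degree $0$ or $1$, i.e. the number of isolated points plus the number of leaves.

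**Wrapping up and the main obstacle.**

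Finally I would assemble the count: the size-$(n-1)$ subsets of $V$ are in bijection with $V$ via $v \mapsto V - \{v\}$, so $c_{n-1} = |\{v : V - \{v\} \in \FG\}| = |\{v : \deg(v) \ge 2\}| = n - |\{v : \deg(v) \le 1\}|$, and the last quantity is exactly the total number of isolated points and leaves. Rearranging gives the stated identity. Honestly, there is no serious obstacle here; the only thing to be slightly careful about is the degenerate cases — for instance when $n = 1$ (the single vertex has degree $0$, and indeed there are no size-$(n-1)=0$ flooding sets since the empty set does not flood a one-vertex graph, matching $c_0 = 0 = 1 - 1$) — and making sure the simple-graph hypothesis is invoked so that "neighbor of $v$" and "element of $V - \{v\}$ adjacent to $v$" coincide. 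I would state the $\deg(v) \ge 2 \iff V-\{v\}$ floods equivalence as the single lemma-like step and then let the counting be a one-line consequence.
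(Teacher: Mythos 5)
Your proposal is correct and follows essentially the same route as the paper: identify the $(n-1)$-element cascade sets with their unique missing vertex $v$, show $V-\{v\}$ floods if and only if $v$ has at least two neighbors (i.e.\ is not an isolated point or leaf), and count. The paper's proof is precisely this argument, so no further comparison is needed.
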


\begin{proof}

Let $L$ be the total number of isolated points and leaves in $G$.  We want to show that $L=n - c_{n-1}$.  Note that $n - c_{n-1}$ is the number of $(n-1)$-element non-flooding cascade sets of $G$.  This is because the total number of $(n-1)$-element cascades sets (flooding or non-flooding) of $G$ is $\binom{n}{n-1} =n$ and $c_{n-1}$ is the number of $(n-1)$-element flooding cascade sets.  Therefore, if we show that an $(n-1)$-element cascade set is flooding if and only if it contains all of the isolated points and leaves of $G$, then we have proven the result since that would imply that the number of $(n-1)$-element non-flooding cascade sets is equal to $\binom{L}{1}=L$.

Suppose $C$ is a cascade set with $n-1$ elements and suppose $v$ is the vertex that is not in $C$.  If $v$ is an isolated point or a leaf, then it does not have two neighbors in $C$, so $v \notin C_1$ and $C = C_1$.  Therefore $\C \neq V$ and $C \notin \FG$. There are $L$ different possibilities for $v$ in this case.   If $v$ is not an isolated point or leaf, then it does have two neighbors in $C$.  Therefore $v \in C_1$ and $C_1 = V$.  Hence $C \in \FG$.

Therefore the number of $(n-1)$-element non-flooding cascade sets is equal to the number of vertices that are isolated points or leaves.
\end{proof}

\begin{cor} \label{leaves}
    The total number of isolated points and leaves of $G$ is determined by $\F$.
\end{cor}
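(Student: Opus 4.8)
This is an immediate consequence of Theorem \ref{numleaves}, so the plan is essentially bookkeeping: exhibit the total number of isolated points and leaves as a quantity that can be read off from the polynomial $\F$ alone. First I would write $\F = \sum_{k=0}^{n} c_k x^k$. The degree of this polynomial determines $n = |V(G)|$ — this is Proposition \ref{numvertices}, together with the fact noted after Definition \ref{Def Flood Polynomial} that $\deg \F = n$ (Proposition \ref{size of graph}) — and once $n$ is known, the coefficient $c_{n-1}$ is simply the coefficient of $x^{n-1}$ in $\F$, hence also determined by $\F$.

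With $n$ and $c_{n-1}$ both in hand, Theorem \ref{numleaves} says the total number of isolated points and leaves of $G$ equals $n - c_{n-1}$, which is therefore a function of $\F$ alone. That is, if two graphs $G$ and $G'$ satisfy $F_G(x) = F_{G'}(x)$, then in particular they have the same number of vertices and the same coefficient of $x^{n-1}$, so they have the same total count of isolated points and leaves. There is no real obstacle here — the content lies entirely in Theorem \ref{numleaves}, and the corollary is just the observation that its formula $n - c_{n-1}$ involves only data visible in the flood polynomial.
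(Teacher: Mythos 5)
Your proposal is correct and follows exactly the paper's (implicit) argument: the corollary is stated without proof precisely because it is immediate from Theorem \ref{numleaves}, since $n$ is the degree of $\F$ and $c_{n-1}$ is a coefficient of $\F$, so $n - c_{n-1}$ is determined by the polynomial alone.
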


The following result follows from the proof of Theorem \ref{numleaves}.

\begin{cor} \label{leaves in cascade set}
    If $C \in \FG$, then $C$ contains all of $G$'s isolated points and leaves.
\end{cor}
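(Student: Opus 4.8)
The plan is to prove the contrapositive by following the relevant half of the argument in the proof of Theorem \ref{numleaves}, but carried through the entire cascade sequence rather than just a single step. Suppose $C$ is a cascade set of $G$ and $v$ is an isolated point or leaf of $G$ with $v \notin C$; the goal is to show $C \notin \FG$. The key observation is that $v$ has at most one neighbor in $G$, hence at most one neighbor in any subset of $V(G)$, and in particular $v$ never has two neighbors in $C_{k-1}$ for any $k \geq 1$.

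First I would establish, by induction on $k$, that $v \notin C_k$ for all $k \geq 0$. The base case is the hypothesis $v \notin C_0 = C$. For the inductive step, a vertex lies in $C_k \setminus C_{k-1}$ only if it has at least two neighbors in $C_{k-1}$; since $v$ cannot, and $v \notin C_{k-1}$ by the inductive hypothesis, we conclude $v \notin C_k$. Consequently $v \notin \overline{C}$, so $\overline{C} \neq V(G)$ and $C$ is a non-flooding cascade set. Taking the contrapositive gives exactly the statement: if $C \in \FG$, then $C$ contains every isolated point and every leaf of $G$.

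I expect no real obstacle here, since the argument is short and elementary. The only minor point to watch is the uniform treatment of the two cases — an isolated point has degree $0$ and a leaf has degree $1$ — which is handled cleanly by phrasing everything in terms of ``$v$ has at most one neighbor,'' so that no case split is needed. One could instead simply invoke the case analysis in the proof of Theorem \ref{numleaves} verbatim, as it already shows that a missing isolated point or leaf prevents flooding; but the self-contained induction above is equally brief and avoids leaning on the $(n-1)$-element specialization used there.
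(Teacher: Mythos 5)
Your proof is correct and follows the same underlying idea the paper relies on: since a leaf or isolated point has at most one neighbor, it can never gain two neighbors in any term of the cascade sequence, so it never floods unless it is in $C$ from the start. The paper leaves this as an immediate consequence of the proof of Theorem \ref{numleaves}; your induction through the whole cascade sequence just makes explicit the detail needed to pass from the $(n-1)$-element case treated there to arbitrary $C$.
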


We now turn our attention to certain two-element subgraphs, which provide a natural way to extend our results about leaves and isolated points.

\begin{defn} \label{deftrigger}A \emph{trigger} is a 
set of two vertices $\{v_i, v_j\}$, where $v_i$ and $v_j$ share an edge and both have degree 2.
\end{defn}
Before proving that the flood polynomial determines the number of triggers, we consider the following example to solidify the concept of a trigger. 

\begin{example} The following graph contains two triggers, $\{v_{1}, v_{4}\}$ and  $\{v_{3}, v_{4}\}$.
    \begin{center}
 \begin{tikzpicture}
    \node (v1) at (0,0) [vertex,label=left: $v_1$] {}; 
    \node (v2) at (1,0) [vertex, label=right: $v_2$] {};
    \node (v3) at (1,-1) [vertex, label=right: $v_3$] {};
    \node (v4) at (0,-1) [vertex, label=left: $v_4$] {};
    \node (v5) at (1.5, 0.5) [vertex, label=right: $v_5$] {};

    \draw [-] (v1) to (v2);
    \draw [-] (v2) to (v3); 
    \draw [-] (v3) to (v4);
    \draw [-] (v4) to (v1);
    \draw [-] (v2) to (v5);
\end{tikzpicture}
\end{center}
Even though $v_1$ and $v_3$ both have degree two, $\{v_1,v_3\}$ is not a trigger since $v_1$ and $v_3$ are not neighbors.
\end{example}

Triggers play an important role in determining whether a cascade set completely floods the graph, as shown below.

\begin{lemma} \label{trigger flood}
    Let $u$ and $v$ be vertices of degree at least $2$ and let $C = V-\{u, v\}$. Then $C \in \FG$ if and only if $\{u, v\}$ is not a trigger.
\end{lemma}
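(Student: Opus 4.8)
The plan is to analyze the cascade sequence of $C = V - \{u,v\}$ directly, splitting into the two implications. Throughout, the only vertices whose flooding status is in question are $u$ and $v$ themselves, since every other vertex is already in $C = C_0$. So the entire question reduces to: do $u$ and $v$ ever get added?

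First I would prove the contrapositive of the forward direction: if $\{u,v\}$ \emph{is} a trigger, then $C \notin \FG$. Since $\{u,v\}$ is a trigger, $u$ and $v$ are adjacent and each has degree exactly $2$. Consider $u$: its two neighbors are $v$ and exactly one other vertex $w \neq v$. In $C_0 = C$, the vertex $u$ has only the single neighbor $w$ present (since $v \notin C$), so $u$ has at most one neighbor in $C_0$ and hence $u \notin C_1$. By the symmetric argument, $v \notin C_1$. Therefore $C_1 = C_0$, the sequence has stabilized, $\overline{C} = C \neq V$, and $C$ is a non-flooding cascade set.

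For the converse, suppose $\{u,v\}$ is \emph{not} a trigger, while $u$ and $v$ both have degree at least $2$. I want to show $\overline{C} = V$. There are two cases. If $u$ and $v$ are not adjacent, then all of $u$'s neighbors lie in $C = V - \{u,v\}$, and since $\deg(u) \geq 2$, $u$ has at least two neighbors in $C_0$, so $u \in C_1$; symmetrically $v \in C_1$; hence $C_1 = V$ and $C \in \FG$. If $u$ and $v$ are adjacent, then since $\{u,v\}$ is not a trigger, at least one of them — say $u$ — has degree at least $3$. Then $u$ has at least two neighbors other than $v$, all of which lie in $C_0$, so $u \in C_1$. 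Now in $C_1 \supseteq C \cup \{u\}$, the vertex $v$ has $u$ as a neighbor in $C_1$, together with its other neighbor(s) (it has at least $\deg(v) - 1 \geq 1$ neighbors besides $u$, all in $C$), giving $v$ at least two neighbors in $C_1$. Hence $v \in C_2$, so $C_2 = V$ and $C \in \FG$.

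I do not expect any real obstacle here; the argument is a short case analysis, and the degree hypothesis $\deg(u), \deg(v) \geq 2$ is exactly what makes the "not a trigger" side go through (it rules out the degenerate possibility that $u$ or $v$ is a leaf, which would block flooding regardless). The only point requiring a moment's care is the adjacent case in the converse: one must flood $u$ first (using its degree $\geq 3$) and only then flood $v$, rather than trying to flood both simultaneously — but this is handled cleanly by passing from $C_1$ to $C_2$. It may also be worth remarking that this lemma generalizes Theorem~\ref{numleaves} and Corollary~\ref{leaves in cascade set} in spirit: leaves block flooding of a single missing vertex, triggers block flooding of a missing adjacent pair.
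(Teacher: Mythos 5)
Your proof is correct and follows essentially the same argument as the paper: the trigger case stabilizes at $C_1 = C$, and the non-trigger case splits into non-adjacent (both $u$ and $v$ flood at step one) and adjacent with one vertex of degree at least three (flood that vertex first, then the other at step two). No gaps.
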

%

\begin{proof}
    Let $C$ be a cascade set of $G$.  If $\{u, v\}$ is a trigger, then $u$ and $v$ are neighbors and they each have one other neighbor in the graph.  This means $u \notin C_1$ and $v \notin C_1$ since neither one has two neighbors in $C$.  Therefore $C = C_1$ and $C \notin \FG$.

    If $\{u, v\}$ is not a trigger, then either $u$ and $v$ are not neighbors, or $u$ and $v$ are neighbors with at least one with degree greater than 2 (our assumption is that $u$ and $v$ are not isolated points or leaves).

    Suppose that $u$ and $v$ are not neighbors.  Since they both have degree at least $2$, it follows that they both have $2$ neighbors in $C$.  Therefore $C_1 = V$ and $C \in \FG$.

    Now suppose without loss of generality that that $u$ and $v$ are neighbors but the degree of $u$ is greater than 2.  This means that $u$ has at least $2$ neighbors in $C$ so $u \in C_1$.  Since $u \in C_1$ and $v$ has at least one neighbor in $C$, it follows that $v \in C_2$ and $C_2 = V$.  Therefore $C \in \FG$.
\end{proof}


Combining this result with the contrapositive of Proposition \ref{flooding superset} gives the following.

\begin{cor} \label{Jackson Cor}
    If $C$ is a cascade set and $V-C$ contains a trigger, then $C \notin \FG$
\end{cor}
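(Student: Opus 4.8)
The plan is to reduce directly to Lemma \ref{trigger flood} via the contrapositive of Proposition \ref{flooding superset}. Suppose $C$ is a cascade set of $G$ and that $V - C$ contains a trigger $\{u,v\}$. Since $\{u,v\} \subseteq V - C$, we have $C \subseteq V - \{u,v\}$; set $C' = V - \{u,v\}$, so that $C \subseteq C'$.

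Next I would invoke Lemma \ref{trigger flood} to conclude $C' \notin \FG$. This applies because, by Definition \ref{deftrigger}, both $u$ and $v$ have degree exactly $2$, hence degree at least $2$, which is precisely the hypothesis of the lemma; and $\{u,v\}$ is a trigger by assumption, so the lemma gives $C' = V - \{u,v\} \notin \FG$.

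Finally, the contrapositive of Proposition \ref{flooding superset} states that if $C \subseteq C'$ and $C' \notin \FG$, then $C \notin \FG$. Applying this with the $C \subseteq C'$ established above and $C' \notin \FG$ from the previous step yields $C \notin \FG$, as desired.

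There is essentially no obstacle here: the only point requiring a moment's care is observing that the degree hypothesis of Lemma \ref{trigger flood} is automatically met because being a trigger forces both vertices to have degree $2$, so the lemma can be applied to $C' = V - \{u,v\}$ without any side conditions. Everything else is a one-line chaining of the two cited results.
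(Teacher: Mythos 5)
Your proposal is correct and matches the paper's own (implicit) argument exactly: the paper derives this corollary precisely by combining Lemma \ref{trigger flood} applied to $V-\{u,v\}$ with the contrapositive of Proposition \ref{flooding superset}. Your extra remark that trigger vertices automatically have degree $2$, so the lemma's hypothesis holds, is a fine clarification but not a deviation.
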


We are now ready to state how to enumerate triggers from the flood polynomial.

\begin{thm} \label{triggers}
 If $\displaystyle \F = \sum_{k=0}^n c_kx^k$, then the total number of triggers is \\ $\displaystyle \binom{n}{2} - (n-1)(n - c_{n-1}) + \binom{n - c_{n-1}}{2} - c_{n-2}$. 
\end{thm}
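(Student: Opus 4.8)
The plan is to count $(n-2)$-element non-flooding cascade sets in two ways. By the same reasoning used in the proof of Theorem~\ref{numleaves}, the total number of $(n-2)$-element cascade sets is $\binom{n}{2}$, so the number of non-flooding ones among them is $\binom{n}{2} - c_{n-2}$. Thus it suffices to show that the number of pairs $\{u,v\}$ for which $C = V - \{u,v\}$ is non-flooding equals $(n-1)(n-c_{n-1}) - \binom{n-c_{n-1}}{2} + (\text{number of triggers})$; rearranging then gives the stated formula. So the real task is to classify which pairs $\{u,v\}$ yield a non-flooding $C$.

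By Lemma~\ref{trigger flood}, if $u$ and $v$ both have degree at least $2$, then $C = V-\{u,v\}$ is non-flooding precisely when $\{u,v\}$ is a trigger. The remaining pairs are exactly those in which at least one of $u,v$ is an isolated point or leaf; I claim every such pair gives a non-flooding $C$. Indeed, if $v$ is an isolated point or leaf, then $v$ has at most one neighbor total, hence fewer than two neighbors in $C$, so $v \notin C_1$ and $C = C_1 \ne V$. Therefore the non-flooding $(n-2)$-element cascade sets are in bijection with (pairs meeting the set $L$ of isolated points and leaves) together with (triggers); note that these two collections are disjoint, since the two vertices of a trigger each have degree exactly $2$ and so lie outside $L$. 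Writing $|L| = n - c_{n-1}$ by Theorem~\ref{numleaves}, the number of pairs $\{u,v\}$ with $\{u,v\}\cap L \ne \emptyset$ is, by inclusion–exclusion, $|L|(n-1) - \binom{|L|}{2} = (n-1)(n-c_{n-1}) - \binom{n-c_{n-1}}{2}$, since each element of $L$ pairs with $n-1$ other vertices and each pair inside $L$ is counted twice. Adding the number of triggers and setting the result equal to $\binom{n}{2} - c_{n-2}$ yields the formula upon solving for the number of triggers.

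The one subtlety I would be careful about is the disjointness and the inclusion–exclusion bookkeeping: a pair $\{u,v\}$ could have \emph{both} endpoints in $L$, so one must not double-count, and one must confirm that no pair is simultaneously a trigger and an "$L$-pair" (which follows immediately from the degree conditions in Definition~\ref{deftrigger} versus the definitions of isolated point and leaf). Everything else is a direct application of Lemma~\ref{trigger flood} and Theorem~\ref{numleaves}, so I expect the main obstacle to be purely the careful combinatorial counting rather than any genuinely hard step.
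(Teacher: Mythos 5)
Your proposal is correct and follows essentially the same route as the paper: both arguments count the $(n-2)$-element non-flooding cascade sets as (pairs meeting the set $L$ of leaves and isolated points, counted by inclusion--exclusion as $(n-1)(n-c_{n-1})-\binom{n-c_{n-1}}{2}$) together with the triggers, invoking Lemma~\ref{trigger flood}, Theorem~\ref{numleaves}, and the degree-$2$ condition for disjointness, then solving the resulting identity. The only tiny imprecision is your claim that $C=C_1$ when one removed vertex lies in $L$ (the other removed vertex might still flood); what you actually need, and what the paper gets from Corollary~\ref{leaves in cascade set}, is that a leaf or isolated point can never acquire two flooded neighbors, so $\overline{C}\neq V$.
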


\begin{proof}

The structure of this proof will be very similar to that of Theorem \ref{numleaves}.  Let $T$ be the number of triggers in $G$.  
We will show $T + (n-1)(n - c_{n-1}) - \binom{n - c_{n-1}}{2} = \binom{n}{2} - c_{n-2}$.  Note that the right hand side, $\binom{n}{2} - c_{n-2}$, is the number of $(n-2)$-element non-flooding cascade sets.  Also note that by Theorem \ref{numleaves} and the property of inclusion-exclusion, $(n-1)(n - c_{n-1}) - \binom{n - c_{n-1}}{2}$ is the number of $(n-2)$-element cascade sets $C$ such that $V-C$ contains at least one leaf or isolated point.  Note that for all these sets, $V-C$ is not a trigger because the vertices that make up triggers have degree $2$.
We have already established by Corollary \ref{leaves in cascade set} that these cascade sets are non-flooding.  We also established by Lemma \ref{trigger flood} that the only other $(n-2)$-element non-flooding cascade sets are in bijection with the set of triggers.

Therefore we have that $T + (n-1)(n - c_{n-1}) - \binom{n - c_{n-1}}{2} = \binom{n}{2} - c_{n-2}$ as desired.
\end{proof}
%

\begin{cor}
    The total number of triggers of $G$ is determined by $\F$.
\end{cor}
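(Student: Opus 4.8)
The plan is to deduce this immediately from the explicit formula in Theorem \ref{triggers}. That theorem expresses the number of triggers of $G$ as
\[
\binom{n}{2} - (n-1)(n - c_{n-1}) + \binom{n - c_{n-1}}{2} - c_{n-2},
\]
so it suffices to observe that every quantity on the right-hand side is recoverable from $\F$ alone. First I would invoke Proposition \ref{numvertices} (equivalently, the remark that $\deg \F = n$) to recover $n$ from $\F$. Then, writing $\F = \sum_{k=0}^{n} c_k x^k$, the values $c_{n-1}$ and $c_{n-2}$ are simply two of the coefficients of $\F$, hence are read off directly. Substituting these into the displayed expression yields a number that depends only on data extracted from $\F$, so two graphs with the same flood polynomial have the same number of triggers.

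Since this is a one-line consequence of the preceding theorem, there is no real obstacle to overcome; the only point requiring any care is that the formula in Theorem \ref{triggers} is stated in terms of $n$ as well as the coefficients, so one must explicitly note that $n$ itself is determined by $\F$ (via Proposition \ref{numvertices}) rather than given as external data. Once that is said, the corollary follows.

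In writing this up I would keep the proof to two sentences: one recalling the formula from Theorem \ref{triggers}, and one observing that $n$ (by Proposition \ref{numvertices}) and the coefficients $c_{n-1}, c_{n-2}$ are all determined by $\F$, whence so is the stated expression. No induction, case analysis, or computation is needed.
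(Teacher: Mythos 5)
Your proposal is correct and matches the paper's intent exactly: the paper states this corollary with no separate proof, treating it as an immediate consequence of Theorem \ref{triggers} since $n$ (by Proposition \ref{numvertices}) and the coefficients $c_{n-1}$, $c_{n-2}$ are all read off from $\F$. Your added remark that $n$ must itself be recovered from $\F$ rather than assumed is the right point of care, and nothing further is needed.
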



As seen in this section, the values of $c_n$, $c_{n-1}$, and $c_{n-2}$ can be used to determine properties about the graph.  
It is still not known whether the value of any other coefficients can be used to determined the number of other subgraphs of $G$ similar to what we saw in Theorem \ref{numleaves} and Theorem \ref{triggers}.

\subsection{Free vertices}

We saw in the proof of Corollary \ref{leaves in cascade set} that if a graph contains any leaves or isolated points, then any flooding cascade set must contain all of those vertices.  In particular, the minimal flooding cascade sets must contain all of the leaves and isolated points.
On the other hand, it is possible that the inclusion of a vertex in a cascade set is never necessary to flood the graph. 
We say a vertex $v$ is \emph{free} if it is not an element of any minimal flooding cascade sets.

\begin{prop}
    If a vertex $v$ is neighbors with two or more leaves, then $v$ is free.
\end{prop}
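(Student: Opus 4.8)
The plan is to prove the contrapositive-flavored statement that \emph{every} flooding cascade set containing $v$ fails to be minimal; this immediately forces $v$ to lie in no minimal flooding cascade set, i.e.\ $v$ is free. So I would fix a flooding cascade set $C \in \FG$ with $v \in C$, set $C' = C \setminus \{v\}$, and aim to show $C' \in \FG$.

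Two preliminary observations make this go through. First, since $v$ is adjacent to at least two leaves, say $\ell_1$ and $\ell_2$, we have $\deg(v) \geq 2$, so $v$ is itself neither a leaf nor an isolated point, and (as $G$ is simple) $v$ is distinct from $\ell_1$ and $\ell_2$. Second, by Corollary \ref{leaves in cascade set}, every flooding cascade set contains all leaves of $G$; hence $\ell_1, \ell_2 \in C$, and since $\ell_1, \ell_2 \neq v$ we still have $\ell_1, \ell_2 \in C'$.

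Now run the cascade sequence $C'_0, C'_1, \dots$ of $C'$. The vertex $v$ has the two neighbors $\ell_1, \ell_2$ in $C' = C'_0$, so $v \in C'_1$, and therefore $C'_1 \supseteq C' \cup \{v\} = C$. Since $C \in \FG$ and $C \subseteq C'_1$, Proposition \ref{flooding superset} yields $C'_1 \in \FG$. Because $C'_1$ is a term in the cascade sequence of $C'$, we get $\overline{C'} = \overline{C'_1} = V$, so $C' \in \FG$. Thus $C'$ is a proper subset of $C$ that completely floods $G$, which shows $C$ is not minimal. As $C$ was an arbitrary flooding cascade set containing $v$, the vertex $v$ belongs to no minimal flooding cascade set, i.e.\ $v$ is free.

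I do not expect a genuine obstacle here; the proof is short once Corollary \ref{leaves in cascade set} and Proposition \ref{flooding superset} are in hand. The only points that need a moment's care are confirming that $v$ itself is not a leaf or isolated point (so the two leaf neighbors really are distinct from $v$, and Corollary \ref{leaves in cascade set} is applied to $\ell_1,\ell_2$ rather than to $v$), and invoking Proposition \ref{flooding superset} in the guise ``if a subset of $C'_1$ floods, so does $C'_1$, hence so does $C'$,'' using that all terms of a cascade sequence converge to the same set $\overline{C'}$.
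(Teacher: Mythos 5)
Your proof is correct and follows essentially the same route as the paper's: remove $v$ from a flooding cascade set $C$, use Corollary \ref{leaves in cascade set} to see the two leaf neighbors remain in $C' = C - \{v\}$, conclude $v \in C_1'$ so $C \subseteq C_1'$, and invoke Proposition \ref{flooding superset} to get $C' \in \FG$, showing $C$ is not minimal. Your extra checks (that $v$ is not itself a leaf and that the cascade sequence of $C'$ converges to $V$) are just more explicit versions of steps the paper leaves implicit.
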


\begin{proof}
    Suppose $v$ is a vertex with at least two leaves as neighbors and suppose that $v$ is an element of the flooding cascade set $C$.  We will prove this result by showing that $C$ is not a minimal flooding cascade set.  By Corollary \ref{leaves in cascade set}, we know that $C$ contains all of the leaves of the graph.  In particular, $C$ contains at least two neighbors of $v$.  Let $C' = C - \{v\}$.  Since $C'$ contains at least two neighbors of $v$, we have that $v\in C_1'$ so $C \subseteq C_1'$. By Proposition \ref{flooding superset}, it follows that $C_1' \in \FG$ so $C' \in \FG$.  Therefore $C$ is not minimal since  $C' \subsetneq C$, as desired.
\end{proof}

We just saw that any vertex with two or more leaves as neighbors is necessarily free, however, there are other, less trivial ways for a vertex to be free.

\begin{example} \label{free vertex example}

The following graph has two free vertices, but neither of them has two leaves as neighbors.  The free vertices are colored black. 

\begin{center}
\begin{tikzpicture}
\node (v0) at ( -1,1) [vertex] {};
\node (v1) at ( 0,0) [vertex] {};
\node (v2) at ( 0,1) [vertex, fill = black] {};
\node (v3) at ( 1,0) [vertex] {};
\node (v4) at ( 1,1) [vertex, fill = black] {};
\node (v5) at ( 2,1) [vertex] {};


\draw [-] (v0) to (v2);
\draw [-] (v1) to (v2);
\draw [-] (v1) to (v3);
\draw [-] (v2) to (v4);
\draw [-] (v3) to (v4);
\draw [-] (v4) to (v5);

\end{tikzpicture}
\end{center}

Observe that neither of those two vertices appear in the the two minimal flooding cascade sets shown below.

\begin{center}
\begin{tikzpicture}
\node (v0) at ( -1,1) [flood] {};
\node (v1) at ( 0,0) [flood] {};
\node (v2) at ( 0,1) [vertex] {};
\node (v3) at ( 1,0) [vertex] {};
\node (v4) at ( 1,1) [vertex] {};
\node (v5) at ( 2,1) [flood] {};


\draw [-] (v0) to (v2);
\draw [-] (v1) to (v2);
\draw [-] (v1) to (v3);
\draw [-] (v2) to (v4);
\draw [-] (v3) to (v4);
\draw [-] (v4) to (v5);

\end{tikzpicture}
\hspace{1 cm}
\begin{tikzpicture}
\node (v0) at ( -1,1) [flood] {};
\node (v1) at ( 0,0) [vertex] {};
\node (v2) at ( 0,1) [vertex] {};
\node (v3) at ( 1,0) [flood] {};
\node (v4) at ( 1,1) [vertex] {};
\node (v5) at ( 2,1) [flood] {};


\draw [-] (v0) to (v2);
\draw [-] (v1) to (v2);
\draw [-] (v1) to (v3);
\draw [-] (v2) to (v4);
\draw [-] (v3) to (v4);
\draw [-] (v4) to (v5);

\end{tikzpicture}
\end{center}

\end{example}

It is trivial that $\F$ gives an upper bound for the number of free vertices of $G$ since $\F$ gives the total number of vertices.  However, a stronger upper bound can be determined from $\F$.

\begin{thm} \label{free vertex theorem}
    The number free vertices of $G$ is bounded above by the number of factors of $(x+1)$ in $\F$. 
\end{thm}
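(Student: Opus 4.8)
The plan is to show that the presence of a free vertex forces a factor of $(x+1)$ in $\F$, and that $k$ free vertices force $k$ such factors, by repeatedly "factoring out" free vertices one at a time. The key observation is this: if $v$ is a free vertex, then I claim that the set of flooding cascade sets containing $v$ and the set of flooding cascade sets not containing $v$ are related by a simple bijection. Specifically, if $C \in \FG$ and $v \notin C$, then since $C$ is not required to contain $v$ (free vertices are never needed), I want $C \cup \{v\} \in \FG$ — which is immediate from Proposition \ref{flooding superset}. Conversely, if $C \in \FG$ and $v \in C$, I want $C - \{v\} \in \FG$. This is the crucial point and is NOT true for arbitrary $v$; it needs the definition of free. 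The idea: since $v$ is free, $v$ lies in no minimal flooding cascade set, so $C$ is not minimal, but more than that, I should argue that $C - \{v\}$ still floods. If $C - \{v\}$ floods for every flooding $C$ containing $v$, then the map $C \mapsto C \triangle \{v\}$ is an involution on $\FG$ pairing each set of size $m$ containing $v$ with one of size $m-1$ not containing $v$, giving $\F = (x+1) \cdot F_{G}'(x)$ for some polynomial with nonnegative integer coefficients, where $F_G'$ counts flooding cascade sets of $G$ that avoid $v$.

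So the first main step is to establish: \emph{if $v$ is free and $C \in \FG$ with $v \in C$, then $C - \{v\} \in \FG$.} Here is the approach. Consider the cascade sequence of $C - \{v\}$. If at some stage $v$ gets flooded, then from that stage on the sequence dominates the cascade sequence of $C$ (which floods), so $C - \{v\}$ floods. So suppose $v$ is never flooded by $C - \{v\}$; let $D = \overline{C - \{v\}}$, so $v \notin D$. Then $D \cup \{v\}$ is a flooding cascade set (it contains $C$... wait, need $C \subseteq D \cup \{v\}$: indeed $C - \{v\} \subseteq D$ and $v \in \{v\}$, so yes), hence by minimality considerations there is a minimal flooding cascade set $K \subseteq D \cup \{v\}$; since $v$ is free, $v \notin K$, so $K \subseteq D$, so $D \in \FG$, so $D = V$, contradicting $v \notin D$. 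Hence $C - \{v\}$ floods. This handles a single free vertex.

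The second main step is the induction on the number of free vertices. Let $v_1, \dots, v_k$ be free in $G$. After the involution argument, $\F = (x+1) F_G'(x)$ where $F_G'(x) = \sum_{C \in \FG,\, v_1 \notin C} x^{|C|}$. I would like to say this is "the flood polynomial of $G$ with $v_1$ somehow removed," but that is delicate, so instead I will argue directly: I claim $v_2, \dots, v_k$ remain free relative to the restricted collection, in the sense that the same involution $C \mapsto C \triangle \{v_2\}$ acts on $\{C \in \FG : v_1 \notin C\}$. The point is that $v_2$ being free in $G$ already gives the set-level statements above for \emph{all} flooding cascade sets, and restricting to those avoiding $v_1$ does no harm: if $v_1 \notin C$ and $v_2 \in C$, then $v_1 \notin C - \{v_2\}$ as well, and $C - \{v_2\} \in \FG$ by Step 1; and if $v_1 \notin C$ and $v_2 \notin C$ then $C \cup \{v_2\} \in \FG$ with $v_1 \notin C \cup \{v_2\}$ (as $v_1 \ne v_2$). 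So the involution restricts, yielding another factor of $(x+1)$. Iterating $k$ times produces $\F = (x+1)^k Q(x)$ with $Q$ a polynomial with nonnegative coefficients (it counts flooding cascade sets avoiding all of $v_1, \dots, v_k$, and $Q(1) \geq 1$ since $V - \{v_1,\dots,v_k\}$... hmm, need that to be flooding — actually $Q$ need only be a genuine polynomial, and $(x+1)^k \mid \F$ is what we want), so $(x+1)^k$ divides $\F$, i.e. the number of free vertices is at most the number of factors of $(x+1)$ in $\F$.

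The main obstacle is Step 1 — proving $C - \{v\} \in \FG$ from $v$ free — because "free" is defined only via minimal flooding cascade sets, and one must bridge from that definition to a statement about an arbitrary (non-minimal) flooding set $C$ containing $v$. The argument sketched above (splitting on whether $v$ ever refloods, and using that any flooding superset contains a minimal flooding subset which must avoid $v$) is the heart of it; I should double-check the edge case where $G$ itself is a single vertex or where removing $v$ empties the graph, but those are degenerate and handled trivially. A secondary subtlety is making sure $Q(x)$ is a legitimate polynomial with $Q \neq 0$; since $v_1,\dots,v_k$ are free, no minimal flooding set uses them, and $V$ itself contains them, but $V - \{v_1,\dots,v_k\}$ need not flood — however, for divisibility all we need is that the factorization $\F = (x+1)^k Q(x)$ holds over $\mathbb{Z}[x]$, which it does by construction.
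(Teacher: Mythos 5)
Your proposal is correct and is, at its core, the same argument as the paper's: both rest on the fact that every flooding cascade set contains a minimal one, which by definition avoids every free vertex, so deleting free vertices from a flooding cascade set leaves a flooding cascade set by Proposition \ref{flooding superset}, and this pairs each element of $\FG$ with a flooding set avoiding the free vertices together with an arbitrary subset of them, producing a factor of $(x+1)$ per free vertex. The only cosmetic differences are that you extract the factors one at a time via an involution $C \mapsto C \,\triangle\, \{v\}$ instead of all at once via the bijection with $\mathcal{M}\times\mathcal{P}(\mathcal{S})$, and your cascade-limit case analysis in Step 1 is a longer route to the same key removal fact.
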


\begin{proof}

Let $\mathcal{S}$ be the set of free vertices of $G$, $s = |\mathcal{S}|$, and let $\mathcal{P}(\mathcal{S})$ be the power set of $\mathcal{S}$. We want to show that $(x+1)$ is a factor of $\F$ with degree at least $s$.  Let $\mathcal{M} = \{C \in \FG \mid C \cap \mathcal{S} = \varnothing\}$, i.e., $\mathcal{M}$ is the set of all flooding cascade sets of $G$ that contain no free vertices.   By definition of a free vertex, every minimal flooding cascade set is an element of $\mathcal{M}$.  Let $\mathcal{A} = \{M \cup S \mid M \in \mathcal{M} \text{ and } S \in \mathcal{P}(\mathcal{S})\}$.  We will show that $\FG = \mathcal{A}$.   

Let $M \in \mathcal{M}$ and $S \in \mathcal{P}(\mathcal{S})$.  Since every element of $\mathcal{M}$ is a flooding cascade set, then by Proposition \ref{flooding superset}, we have that $(M \cup S) \in \FG$.  Therefore $\mathcal{A} \subseteq \FG$.

If $C \in \FG$, then to show $C \in \mathcal{A}$, we need to show that $(C - \mathcal{S}) \in \mathcal{M}$.  Since $C \in \FG$ there exists a minimal flooding cascade set $M$ such that $M \subseteq C$.  It follows that $M = (M-\mathcal{S}) \subseteq(C-\mathcal{S})$.  Therefore $C-\mathcal{S}$ is a flooding cascade set that contains no free vertices.  Hence $C \in \mathcal{A}$ and $\FG \subseteq \mathcal{A}$.

It follows that 
\[\F = \sum_{\substack{M \in \mathcal{M}, \\ S\in \mathcal{P{(\mathcal{S})}}}} x^{|M|+|S|}= \sum_{M \in \mathcal{M}}x^{|M|}\cdot \sum_{S \in \mathcal{P(S)}}x^{|S|} = \sum_{M \in \mathcal{M}}x^{|M|}(x+1)^s\]
as desired.
\end{proof}
The graph from Example \ref{free vertex example} has a flood polynomial that factors as $x^3(x+2)(x+1)^2$ and it has $2$ free vertices.  In general, the inequality given in Theorem \ref{free vertex theorem} will not reduce to an equality.  In fact, the following example shows that the number of free vertices is not determined by the flood polynomial.
%
%

\begin{example} \label{free vertices example}
The following graphs have $x^4(x+1)(x+3)$ as their flood polynomial, but a different number of free vertices. The free vertex is colored black.

\begin{center}
\begin{tikzpicture}
\node at (-1, 0) {$P_6:$};
\node (v1) at ( 0,0) [vertex] {};
\node (v2) at ( 1,0) [vertex] {};
\node (v3) at ( 2,0) [vertex] {};
\node (v4) at ( 3,0) [vertex] {};
\node (v5) at ( 4,0) [vertex] {};
\node (v6) at ( 5,0) [vertex] {};
\draw [-] (v1) to (v2);
\draw [-] (v2) to (v3);
\draw [-] (v3) to (v4);
\draw [-] (v4) to (v5);
\draw [-] (v5) to (v6);
\end{tikzpicture}
\end{center}
\vspace{.5 cm}
\begin{center}
\begin{tikzpicture}
 \node at (-1.5, .45) {$P_3 \oplus C_3:$};
\node (v1) at ( 0,.45) [vertex] {};
\node (v2) at ( 1,.45) [vertex, fill=black] {};
\node (v3) at ( 2,.45) [vertex] {};
\node (v4) at ( 3,0) [vertex] {};
\node (v5) at ( 4,0) [vertex] {};
\node (v6) at ( 3.5,.9) [vertex] {};
\draw [-] (v1) to (v2);
\draw [-] (v2) to (v3);
\draw [-] (v4) to (v6);
\draw [-] (v4) to (v5);
\draw [-] (v5) to (v6);
\end{tikzpicture}
\end{center}

Even though $F_{P_6}(x)$ has a factor of $(x+1)$, $P_6$ has no free vertices 
as you can see by the minimal flooding cascade sets shown below.

\begin{center}
\begin{tikzpicture}
\node (v1) at ( 0,0) [flood] {};
\node (v2) at ( 1,0) [flood] {};
\node (v3) at ( 2,0) [vertex] {};
\node (v4) at ( 3,0) [flood] {};
\node (v5) at ( 4,0) [vertex] {};
\node (v6) at ( 5,0) [flood] {};

\draw [-] (v1) to (v2);
\draw [-] (v2) to (v3);
\draw [-] (v3) to (v4);
\draw [-] (v4) to (v5);
\draw [-] (v5) to (v6);

\end{tikzpicture}

\vspace{.25 cm}

\begin{tikzpicture}
\node (v1) at ( 0,0) [flood] {};
\node (v2) at ( 1,0) [vertex] {};
\node (v3) at ( 2,0) [flood] {};
\node (v4) at ( 3,0) [vertex] {};
\node (v5) at ( 4,0) [flood] {};
\node (v6) at ( 5,0) [flood] {};

\draw [-] (v1) to (v2);
\draw [-] (v2) to (v3);
\draw [-] (v3) to (v4);
\draw [-] (v4) to (v5);
\draw [-] (v5) to (v6);

\end{tikzpicture}

\vspace{.25 cm}

\begin{tikzpicture}
\node (v1) at ( 0,0) [flood] {};
\node (v2) at ( 1,0) [vertex] {};
\node (v3) at ( 2,0) [flood] {};
\node (v4) at ( 3,0) [flood] {};
\node (v5) at ( 4,0) [vertex] {};
\node (v6) at ( 5,0) [flood] {};

\draw [-] (v1) to (v2);
\draw [-] (v2) to (v3);
\draw [-] (v3) to (v4);
\draw [-] (v4) to (v5);
\draw [-] (v5) to (v6);

\end{tikzpicture}
\end{center}

\end{example}

\section{Flood Polynomials for Families of Graphs}

In this section we state a formula for the flood polynomials for three families of graphs: parallel paths, cycles, and triangle mosaics.

\subsection{Parallel Paths} 

A \emph{parallel path graph of size $m \times n$} is a graph with $m \cdot n$ vertices, denoted $P_{m, n}$. It is the graph with vertex set \[V(P_{m, n}) = \{v_{i, j} \mid 1 \leq i \leq m \text{ and } 1 \leq j \leq n\}\] 
and edge set 
\[E(P_{m, n}) = \{v_{i, j}v_{i, j+1} \mid 1 \leq j < n\} \cup \{v_{i, j}v_{i+1, j} \mid 1 \leq i < m\}.\]

It follows immediately from the construction of parallel path graphs that $P_{m, n} \cong P_{n, m}$.  When $m=1$, the graph is a path and we denote $P_{1, n}$ by $P_n$.  
As an example, the following eight-element graph is the parallel path graph of size $2 \times 4$. Our convention will be to label the vertices as you would label the entries in a matrix.  
\begin{center}
\begin{tikzpicture}
    \node (v1) at (-4,0) [vertex, label = above: $v_{1,1}$]  {}; 
    \node (v2) at (-4,-1) [vertex, label = below: $v_{2,1}$] {};
    \node (v3) at (-3,0) [vertex, label = above: $v_{1,2}$] {};
    \node (v4) at (-3,-1) [vertex, label = below: $v_{2,2}$] {};
    \node (v5) at (-2,0) [vertex, label = above: $v_{1,3}$] {};
    \node (v6) at (-2,-1) [vertex, label = below: $v_{2,3}$] {};
    \node (v7) at (-1,0) [vertex, label = above: $v_{1,4}$] {};
    \node (v8) at (-1,-1) [vertex, label = below: $v_{2,4}$] {};

    \draw [-] (v1) to (v3);
    \draw [-] (v1) to (v2); 
    \draw [-] (v2) to (v4);
    \draw [-] (v3) to (v5);
    \draw [-] (v3) to (v4);
    \draw [-] (v4) to (v6);
    \draw [-] (v5) to (v6);
    \draw [-] (v6) to (v8);
    \draw [-] (v7) to (v8);
    \draw [-] (v5) to (v7);
\end{tikzpicture}
\end{center}

We now discuss properties of the flooding cascade sets of parallel path graphs. These are useful for giving a recursion for the flood polynomial of these graphs.  

\begin{lemma} \label{parallel path empty column}
    If $C \in \mathcal{F}(P_{m, n})$, then $\{v_{1,1}, \dots v_{m, 1}\} \cap C \neq \varnothing $ and $\{v_{1,n}, \dots v_{m, n}\} \cap C \neq \varnothing $. 
\end{lemma}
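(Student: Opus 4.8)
The plan is to argue by contrapositive: I will show that if the first column $\{v_{1,1}, \dots, v_{m,1}\}$ is entirely disjoint from a cascade set $C$, then $C$ does not flood $P_{m,n}$, and symmetrically for the last column. The key observation is that in $P_{m,n}$ the vertices of the first column have restricted neighborhoods: $v_{1,1}$ has only the two neighbors $v_{2,1}$ and $v_{1,2}$; $v_{m,1}$ has only $v_{m-1,1}$ and $v_{m,2}$; and each interior first-column vertex $v_{i,1}$ (for $1 < i < m$) has exactly three neighbors, $v_{i-1,1}$, $v_{i+1,1}$, and $v_{i,2}$. In every case, at most one neighbor lies outside the first column (namely $v_{i,2}$), while all other neighbors lie in the first column itself.

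First I would set up the contrapositive and let $C$ be a cascade set with $C \cap \{v_{1,1},\dots,v_{m,1}\} = \varnothing$. I claim by induction on $k$ that no vertex of the first column ever enters $C_k$; that is, $C_k \cap \{v_{1,1},\dots,v_{m,1}\} = \varnothing$ for all $k \geq 0$. The base case $k=0$ is the hypothesis. For the inductive step, suppose the first column is disjoint from $C_{k-1}$; to have $v_{i,1} \in C_k$ we would need $v_{i,1}$ to have at least two neighbors in $C_{k-1}$, but as noted above $v_{i,1}$ has at most one neighbor outside the first column, and its first-column neighbors are all absent from $C_{k-1}$ by the inductive hypothesis, so $v_{i,1}$ has at most one neighbor in $C_{k-1}$ and does not flood. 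Hence $v_{i,1} \notin C_k$ for all $i$, completing the induction.

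It then follows that $v_{1,1} \notin \overline{C}$, so $\overline{C} \neq V(P_{m,n})$ and $C \notin \mathcal{F}(P_{m,n})$. The argument for the last column $\{v_{1,n},\dots,v_{m,n}\}$ is identical after relabeling, since $P_{m,n} \cong P_{n,m}$ and reflection in the column index is a graph automorphism; each last-column vertex has at most one neighbor (namely $v_{i,n-1}$) outside the last column. Taking the contrapositive of both statements gives the lemma. I do not anticipate a serious obstacle here — the only thing to be careful about is the degenerate cases $m=1$ (where $v_{1,1}$ has a single neighbor $v_{1,2}$, so the claim that it never floods is immediate) and $n=1$ (where the first and last columns coincide and the statement just says a nonempty-flooding cascade set must be nonempty, which is trivially true since the empty set floods only the empty graph); both are consistent with the induction above.
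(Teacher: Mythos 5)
Your proposal is correct and is essentially the paper's argument: the paper also isolates the fact that each first-column vertex $v_{j,1}$ has all neighbors except $v_{j,2}$ inside the first column, and concludes that no first-column vertex can ever be the first to flood (phrased there as a contradiction at the minimal index $i$ where the column meets $C_{i+1}$, rather than your induction on $k$, which is the same idea). Your handling of the degenerate cases $m=1$ and $n=1$ and the symmetry for the last column matches the paper's treatment, so there is nothing to fix.
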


\begin{proof}
    We will show that $\{v_{1,1}, \dots v_{m, 1}\} \cap C \neq \varnothing $ and by symmetry we can conclude that $\{v_{1,n}, \dots v_{m, n}\} \cap C \neq \varnothing $.

    Suppose for contradiction that $C$ completely floods $P_{m, n}$, but does not contain any element from $\{v_{1,1}, \dots v_{m, 1}\}$, i.e., $\{v_{1,1}, \dots v_{m, 1}\} \cap C = \varnothing $.  Since $C$ completely floods $P_{m, n}$, there must eventually be a term in the cascade sequence that contains an element of $\{v_{1,1}, \dots v_{m, 1}\}$.  Let $i$ be the smallest number such that $\{v_{1,1}, \dots v_{m, 1}\} \cap C_i = \varnothing $, but $\{v_{1,1}, \dots v_{m, 1}\} \cap C_{i+1} \neq \varnothing$.  Since the intersection is nonempty, there is a value $j$ such that $v_{j,1} \in C_{i+1}$. In order for $v_{j,1} \in C_{i+1}$, but $v_{j,1} \notin C_{i}$, it must be the case that at least two of the neighbors of $v_{j, 1}$ are in $C_i$.  The neighbors of $v_{j, 1}$ are $v_{j+1,1}$, $v_{j-1,1}$, and $v_{j,2}$ (some of these vertices do not exist if $j=1$, $j=m$, or $n=1$). Therefore, it cannot be the case that at least two of these neighbors are in $C_i$ because $\{v_{1,1}, \dots v_{m, 1}\} \cap C_i = \varnothing $.  Therefore, there is no such $C$ that completely floods $P_{m, n}$, as desired.
\end{proof}

A similar argument can be made to prove the following.

\begin{cor}
        If $C \in \mathcal{F}(P_{m, n})$, then $\{v_{1,1}, \dots v_{1, n}\} \cap C \neq \varnothing $ and $\{v_{m,1}, \dots v_{m, n}\} \cap C \neq \varnothing $. 
\end{cor}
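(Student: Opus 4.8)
The plan is to mimic the proof of Lemma~\ref{parallel path empty column}, interchanging the roles of rows and columns. First I would reduce to showing $\{v_{1,1}, \dots, v_{1,n}\} \cap C \neq \varnothing$: the statement for $\{v_{m,1}, \dots, v_{m,n}\}$ then follows by applying this to the image of $C$ under the row-reversing automorphism $v_{i,j} \mapsto v_{m+1-i,\,j}$ of $P_{m,n}$.

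For the first-row claim, I would argue by contradiction, supposing $C$ completely floods $P_{m,n}$ yet $\{v_{1,1}, \dots, v_{1,n}\} \cap C = \varnothing$. Since $\overline{C} = V$, some term of the cascade sequence is the first to meet the top row; let $i$ be smallest with $\{v_{1,1}, \dots, v_{1,n}\} \cap C_i = \varnothing$ and $\{v_{1,1}, \dots, v_{1,n}\} \cap C_{i+1} \neq \varnothing$, and choose $j$ with $v_{1,j} \in C_{i+1} \setminus C_i$. Then $v_{1,j}$ has at least two neighbors in $C_i$. But in $P_{m,n}$ the neighbors of $v_{1,j}$ lie among $v_{1,j-1}$, $v_{1,j+1}$, and $v_{2,j}$ (with $v_{1,j-1}$ absent when $j=1$, $v_{1,j+1}$ absent when $j=n$, and $v_{2,j}$ absent when $m=1$). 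Of these, $v_{1,j-1}$ and $v_{1,j+1}$ lie in the top row and so are not in $C_i$; hence at most the single vertex $v_{2,j}$ can be in $C_i$, contradicting that $v_{1,j}$ has two neighbors there.

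Alternatively --- and perhaps more slickly --- I would just invoke Lemma~\ref{parallel path empty column} for the graph $P_{n,m}$ and pull the conclusion back through the isomorphism $P_{m,n} \cong P_{n,m}$ given by $v_{i,j} \mapsto v_{j,i}$, under which the first and last rows of $P_{m,n}$ become exactly the first and last columns of $P_{n,m}$.

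I do not expect any genuine obstacle here; the only thing to watch is the bookkeeping of the degenerate index cases ($j \in \{1,n\}$ or $m = 1$), but each such case only deletes a neighbor of $v_{1,j}$ and therefore cannot produce a second top-row neighbor, so the argument carries through verbatim as in Lemma~\ref{parallel path empty column}.
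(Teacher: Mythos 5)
Your proposal is correct and is essentially the paper's argument: the paper proves this corollary by noting that "a similar argument" to Lemma \ref{parallel path empty column} applies with rows and columns interchanged, which is exactly your main argument (and your alternative via the isomorphism $P_{m,n}\cong P_{n,m}$ is just a cleaner packaging of the same idea).
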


Informally, the previous Lemma and Corollary state that all flooding cascade sets of $P_{m, n}$ contain at least one vertex from both the first column and last column of $P_{m, n}$, as well as at least one vertex from both the top and bottom row of $P_{m, n}$.  

\begin{lemma} \label{2 column gap parallel paths}
    If $C \in \mathcal{F}(P_{m, n})$, then for all $1 < l < n$, \\
   $(\{v_{1,l}, \dots v_{m, l}\} \cup \{v_{1,l+1}, \dots v_{m, l+1}\}) \cap C \neq \varnothing.$
\end{lemma}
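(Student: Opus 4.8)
The plan is to mimic the structure of the proof of Lemma \ref{parallel path empty column}, but now tracking two consecutive columns at once. Suppose for contradiction that $C \in \mathcal{F}(P_{m,n})$ but there is some $l$ with $1 < l < n$ for which $(\{v_{1,l}, \dots, v_{m,l}\} \cup \{v_{1,l+1}, \dots, v_{m,l+1}\}) \cap C = \varnothing$; call these two columns the \emph{gap}. Since $C$ completely floods $P_{m,n}$, eventually some vertex of the gap appears in the cascade sequence. Let $i$ be the smallest index such that the gap is disjoint from $C_i$ but meets $C_{i+1}$, and pick $v_{j,k} \in C_{i+1}$ with $k \in \{l, l+1\}$.

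First I would examine the neighbors of $v_{j,k}$. Its neighbors are (at most) $v_{j-1,k}$, $v_{j+1,k}$ (same column), $v_{j,k-1}$, and $v_{j,k+1}$ (adjacent columns). Since $v_{j,k} \in C_{i+1} \setminus C_i$, at least two of these neighbors lie in $C_i$. By choice of $i$, none of the same-column neighbors $v_{j\pm1,k}$ are in $C_i$ (they are in the gap), so both $v_{j,k-1}$ and $v_{j,k+1}$ must be in $C_i$. But if $k = l$, then $v_{j,k+1} = v_{j,l+1}$ is in the gap, a contradiction; and if $k = l+1$, then $v_{j,k-1} = v_{j,l}$ is in the gap, again a contradiction. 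Hence no such $i$ exists, so the gap is never touched by the cascade sequence, contradicting the assumption that $C$ floods $P_{m,n}$.

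The argument is essentially a book-keeping exercise and I do not anticipate a genuine obstacle; the one point requiring care is the boundary case $m = 1$, where $v_{j,k}$ has no same-column neighbors at all and so \emph{both} adjacent-column neighbors $v_{j,k-1}, v_{j,k+1}$ are forced into $C_i$ — but the same contradiction applies, since one of those two indices is $l$ or $l+1$ and hence in the gap. (One should also note $l-1 \geq 1$ and $l+2 \leq n$ are not needed: even if one of $v_{j,k-1}$ or $v_{j,k+1}$ fails to exist, we only need the \emph{other} one, which still lands in the gap, so fewer than two neighbors of $v_{j,k}$ can be in $C_i$.) I would state the contradiction uniformly by observing that among any two of the neighbors of $v_{j,k}$, at least one belongs to the gap, so $v_{j,k}$ cannot have two neighbors in $C_i$.
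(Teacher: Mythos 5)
Your proposal is correct and follows essentially the same argument as the paper: take the minimal index $i$ at which the cascade sequence first meets the two forbidden columns, and observe that the newly flooded vertex has at most one neighbor outside those columns, so it cannot have two neighbors in $C_i$. Your explicit treatment of the two cases $k=l$ and $k=l+1$ (and of the boundary situations) is just a more careful bookkeeping of the same contradiction the paper obtains by noting that three of the four potential neighbors lie in the gap.
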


\begin{proof}
    Let $1 < l < n$ be arbitrary and as in the proof of Lemma \ref{parallel path empty column}, we will suppose for contradiction that $C$ completely floods $P_{m, n}$, but does not contain any elements from $(\{v_{1,l}, \dots v_{m, l}\} \cup \{v_{1,l+1}, \dots v_{m, l+1}\})$.  
    Let $V_l = (\{v_{1,l}, \dots v_{m, l}\} \cup \{v_{1,l+1}, \dots v_{m, l+1}\})$.
    Since $C$ completely floods $P_{m, n}$, there must eventually be a term in the cascade sequence that contains an element of $V_l$.
    Let $i$ be the smallest number such that $V_l \cap C_i = \varnothing $, but $V_l \cap C_{i+1} \neq \varnothing$.  Since the intersection is nonempty, there is a value $j$ such that either $v_{j,l} \in C_{i+1}$ or $v_{j,l+1} \in C_{i+1}$.
    Without loss of generality, we can assume that $v_{j,l} \in C_{i+1}$.  The neighbors of $v_{j,l}$ are $v_{j+1,l}$, $v_{j-1,l}$, $v_{j,l-1}$, and $v_{j,l+1}$ (some of these vertices do not exist if $j = 1$ or  $j=m$). 
    Therefore, it cannot be the case that at least two of these neighbors are in $C_i$ because $V_l \cap C_i = \varnothing $ and $\{v_{j+1,l}, v_{j-1,l},  v_{j,l+1}\} \subseteq V_l$. Therefore, there is no such $C$ that completely floods $P_{m, n}$, as desired.
\end{proof}

We say that any cascade set of $P_{m, n}$ that satisfies the conclusions of Lemma \ref{parallel path empty column} and Lemma \ref{2 column gap parallel paths} has the \emph{parallel path property}.  Note that a cascade set does not need to be a flooding cascade set in order to have the parallel path property.

We now state recursive formulas for the flood polynomials of $P_{n}$ and $P_{2, n}$, followed by a discussion about the difficulty in finding a recursive formula for $P_{m, n}$ when $m \geq 3$. 




\subsubsection{$P_n$} \label{Pn}

While the parallel path property does not guarantee a cascade set of $P_{m, n}$ is a flooding cascade set, we will see that it does imply this result for $P_n$.

\begin{thm} \label{PPP Path}
    Let $C$ be a cascade set of $P_n$.  Then $C \in \mathcal{F}(P_n)$ if and only if $C$ has the parallel path property.
\end{thm}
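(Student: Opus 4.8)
The plan is to prove both directions of the biconditional for $P_n$. The forward direction is essentially immediate from the two structural lemmas: if $C \in \mathcal{F}(P_n)$, then Lemma~\ref{parallel path empty column} (with $m=1$) and Lemma~\ref{2 column gap parallel paths} together say exactly that $C$ has the parallel path property, so there is nothing to do there. All the work is in the converse: assuming $C$ has the parallel path property, I must show $C$ completely floods $P_n$.

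For the converse, I would first restate the parallel path property concretely for a path $v_1 v_2 \cdots v_n$: the set $C$ contains $v_1$, contains $v_n$, and for every $1 < \ell < n$ contains at least one of $v_\ell, v_{\ell+1}$ (i.e., $C$ never omits two consecutive vertices in the ``interior''). Equivalently, writing the indices of $C$ in increasing order as $i_1 < i_2 < \cdots < i_k$, we have $i_1 = 1$, $i_k = n$, and every gap satisfies $i_{t+1} - i_t \le 2$. The key observation is then a local one: whenever $v_a, v_{a+2} \in C_j$ for some term $C_j$ of the cascade sequence, the vertex $v_{a+1}$ has two neighbors in $C_j$ and hence lies in $C_{j+1}$. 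So I would argue that $C_1$ already fills in every ``missing'' vertex: if $i_{t+1} = i_t + 2$, then the single omitted vertex $v_{i_t + 1}$ has both its neighbors $v_{i_t}$ and $v_{i_{t+1}}$ in $C = C_0$, so it joins at step $1$. Since the only vertices of $P_n$ not in $C$ are exactly these singleton gaps (the property forces $i_1 = 1$, $i_k = n$, and all gaps $\le 2$), we get $C_1 = V(P_n)$, so $C \in \mathcal{F}(P_n)$.

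I would present this cleanly by induction or by a direct counting argument on the gaps, but I anticipate no real obstacle — the main point to be careful about is the bookkeeping at the endpoints (that $v_1, v_n \in C$ rules out a ``one-sided'' gap that a boundary vertex could not fill) and the degenerate small cases $n = 1$ and $n = 2$, which should be checked separately or absorbed into the argument. The only mildly subtle point is making explicit that the parallel path property, as inherited from Lemma~\ref{2 column gap parallel paths}, does constrain \emph{consecutive} interior indices and not merely columns two apart; since $m = 1$, ``column $\ell$'' is just the vertex $v_\ell$, so the condition reads precisely $\{v_\ell, v_{\ell+1}\} \cap C \neq \varnothing$, which is what the gap-$\le 2$ reformulation uses. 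I expect the whole converse to be a short paragraph once the reformulation in terms of gaps is in place.
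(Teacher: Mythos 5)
Your proposal is correct and follows essentially the same route as the paper: the forward direction is quoted from Lemma \ref{parallel path empty column} and Lemma \ref{2 column gap parallel paths}, and the converse observes that any omitted vertex $v_j$ must have both neighbors $v_{j-1}, v_{j+1}$ in $C$ (your ``gap $\le 2$'' bookkeeping is just a restatement of this), so $C_1 = V(P_n)$. The paper likewise concludes in one step that $C_1 = V$, so no comparison beyond phrasing is needed.
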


\begin{proof}
    If $C \in \mathcal{F}(P_n)$, then by Lemma \ref{parallel path empty column} and Lemma \ref{2 column gap parallel paths} we have that $C$ has the parallel path property.

    Now suppose that $C$ is a cascade set with the parallel path property, i.e., with $\{v_1, v_n\}\subseteq C$ and for all $1<i<n$, $\{v_i, v_{i+1}\} \nsubseteq (V-C)$.  Let $v_j$ be an arbitrary element in $V - C$.  Since $\{v_1, v_n\}\subseteq C$, $j \notin \{1, n\}.$  Since $\{v_{j-1}, v_{j}\} \nsubseteq (V-C)$ and $\{v_j, v_{j+1}\} \nsubseteq (V-C)$, we have that $\{v_{j-1}, v_{j+1}\} \subseteq C$.  Therefore, $v_j \in C_1$ and $C_1 = V$.  Therefore, $C \in \mathcal{F}(P_n)$ as desired.
\end{proof}

We can now give a recursion for the flood polynomial of path graphs.

\begin{thm} \label{path polynomial}
The flood polynomial for path graphs $P_n$ can be determine recursively with $F_{P_1}(x) = x$, $F_{P_2}(x) = x^2$, and for $n \geq 3$, $F_{P_n}(x) = x\cdot F_{P_{n-1}}(x)+ x \cdot F_{P_{n-2}}(x)$.
\end{thm}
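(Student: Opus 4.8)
The plan is to enumerate flooding cascade sets of $P_n$ by conditioning on whether the last vertex $v_n$ is ``isolated'' from the rest of the set by an empty gap. By Theorem \ref{PPP Path}, a cascade set $C$ of $P_n$ is flooding if and only if it has the parallel path property: it contains $v_1$ and $v_n$, and it never omits two consecutive vertices $v_i, v_{i+1}$ for $1 < i < n$. So I want to count subsets $C \subseteq \{v_1, \dots, v_n\}$ with $v_1, v_n \in C$ and no two consecutive omitted vertices strictly inside the path, weighting each by $x^{|C|}$. Call the generating function for these $F_{P_n}(x)$, matching the definition.

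The key step is a deletion argument at the right end. Since $v_n \in C$ always, consider the status of $v_{n-1}$. \textbf{Case 1:} $v_{n-1} \in C$. Then removing $v_n$ from $C$ yields a subset $C'$ of $\{v_1, \dots, v_{n-1}\}$ that still contains $v_1$ and $v_{n-1}$, and still has no two consecutive interior omissions (the only new ``end'' is $v_{n-1}$, which is present, and no omission constraint was affected); conversely any flooding cascade set of $P_{n-1}$ can have $v_n$ appended. This case contributes $x \cdot F_{P_{n-1}}(x)$. \textbf{Case 2:} $v_{n-1} \notin C$. Then since $\{v_{n-2}, v_{n-1}\}$ cannot both be omitted (as $v_{n-1}$ is interior for $n \geq 3$), we must have $v_{n-2} \in C$. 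Deleting both $v_{n-1}$ and $v_n$ leaves a subset $C''$ of $\{v_1, \dots, v_{n-2}\}$ containing $v_1$ and $v_{n-2}$ with no consecutive interior omissions, i.e. a flooding cascade set of $P_{n-2}$; conversely, given any flooding cascade set of $P_{n-2}$, appending $v_n$ (but not $v_{n-1}$) produces a valid configuration, since the newly-created omission at $v_{n-1}$ is flanked by $v_{n-2} \in C$ and $v_n \in C$. This case contributes $x \cdot F_{P_{n-2}}(x)$, and the two cases are disjoint and exhaustive, giving the recursion.

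For the base cases: $P_1$ is a single vertex, and by the proposition on one-element cascade sets (or directly), its only flooding cascade set is the whole vertex set, so $F_{P_1}(x) = x$. For $P_2$, every flooding cascade set must contain both vertices (each is a leaf, by Corollary \ref{leaves in cascade set}, or directly since neither endpoint ever floods), so $F_{P_2}(x) = x^2$. I should double-check the edge case $n = 3$ in the recursion separately, since that is where ``interior'' first has content and where the Case 2 analysis ($v_1 = v_{n-2} \in C$ forced) needs $v_1 \in C$, which is guaranteed: one verifies directly that $F_{P_3}(x) = x^3 + x \cdot x^2$? No — let me be careful: the flooding cascade sets of $P_3$ are $\{v_1, v_2, v_3\}$ and $\{v_1, v_3\}$, giving $x^3 + x^2$, and indeed $x \cdot F_{P_2} + x \cdot F_{P_1} = x \cdot x^2 + x \cdot x = x^3 + x^2$.

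The main obstacle is making the bijective correspondence in Case 2 airtight: I must confirm that appending $v_n$ alone to an arbitrary flooding cascade set of $P_{n-2}$ never violates the no-two-consecutive-interior-omissions condition. The only potentially new omitted vertex is $v_{n-1}$, whose neighbors in $P_n$ are $v_{n-2}$ and $v_n$; since $v_{n-2}$ is the right endpoint of $P_{n-2}$ it lies in any flooding cascade set of $P_{n-2}$, and $v_n$ is being added, so $v_{n-1}$ is not part of a forbidden consecutive pair. Conversely in the forward direction I need that $v_{n-1} \notin C$ forces $v_{n-2} \in C$, which is exactly the parallel path property applied at the interior index $n-2$ (valid since $n - 2 \geq 1$ and, for the pair $\{v_{n-2}, v_{n-1}\}$ to be constrained, we need $n - 1 < n$, true, and the relevant index for the ``no consecutive interior pair'' is $i = n-2$ with $1 < n-2$ when $n \geq 4$; for $n = 3$ the constraint instead comes from $v_1 \in C$ directly). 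Handling the $n=3$ boundary cleanly — rather than hand-waving — is the one place where care is needed.
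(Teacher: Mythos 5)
Your proof is correct and follows essentially the same route as the paper: both invoke Theorem \ref{PPP Path} to characterize flooding cascade sets of $P_n$ via the parallel path property, then delete $v_n$ and split on whether $v_{n-1}$ lies in the set to land in $\mathcal{F}(P_{n-1})$ or $\mathcal{F}(P_{n-2})$, with the same reverse maps. Your explicit treatment of the $n=3$ boundary is a welcome extra check but does not change the argument.
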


\begin{proof}
    It is easy to see that $F_{P_1}(x) = x$ and $F_{P_2}(x) = x^2$, so our initial conditions hold.

    Now let $n \geq 3.$  We will show that  $F_{P_n}(x) = x\cdot F_{P_{n-1}}(x)+ x \cdot F_{P_{n-2}}(x)$ by showing that each element of $\mathcal{F}(P_n)$ can be expressed as the union of $\{v_n\}$ with either an element of $\mathcal{F}(P_{n-1})$ or $\mathcal{F}(P_{n-2})$.

    Let $C \in \mathcal{F}(P_n)$ and let $C' = C - \{v_n\}$.  By Theorem \ref{PPP Path}, we have that $v_n \in C$ and hence $|C'| = |C| - 1$.  That is to say $x^{|C|} = x\cdot x^{|C'|}$.  It is also the case that at least one of $v_{n-2}$ or  $v_{n-1}$ is in $C$ and hence, $C'$.  

    If $v_{n-1} \in C'$, then $C'$ has the parallel path property when viewed as a cascade set of $P_{n-1}$. Therefore $C' \in \mathcal{F}(P_{n-1})$.  Note that every flooding cascade set of $P_{n-1}$ will be considered in this steps because if $C \in \mathcal{F}(P_{n-1})$, then $(C \cup \{v_n\}) \in \mathcal{F}(P_{n})$.

    If $v_{n-1} \notin C'$, then it must be the case that $v_{n-2} \in C'$.  Therefore $C'$ has the parallel path property when viewed as a cascade set of $P_{n-2}$. Therefore $C' \in \mathcal{F}(P_{n-2})$.  Note that every flooding cascade set of $P_{n-2}$ will be considered in this steps because if $C \in \mathcal{F}(P_{n-2})$, then $(C \cup \{v_n\}) \in \mathcal{F}(P_{n})$.

    Therefore we have that $F_{P_n}(x) = x\cdot F_{P_{n-1}}(x)+ x \cdot F_{P_{n-2}}(x)$ as desired.
\end{proof}

The \emph{Fibonacci numbers} are a sequence of numbers defined recursively by $f_0 = 0$, $f_1 = 1$, and for $n \geq 2$, $f_n = f_{n-1}+f_{n-2}$; the first few Fibonacci numbers are $0, 1, 1, 2, 3, 5, \dots$.  The Fibonacci numbers count numerous combinatorial objects 
\cite{OEIS_A000045}, but in particular, it is well-known that for $n\geq 2$, $F_n$ is the number of binary sequences of length $n-2$ that have no consecutive $0$'s, as well as the number of binary sequences of length $n$ with no no consecutive $0$'s whose first and last entry is $1$. This set is in bijection with the flood set of $P_n$ under the map $C \leftrightarrow \sigma$ where $\sigma_i = 1$ if $v_i \in C$ and $\sigma_i = 0$ otherwise.




\begin{cor} \label{fibonacci numbers}
For all $n \geq 1$, the total number of flooding cascade sets of $P_n$ is equal to the $n$th Fibonacci number, i.e., $F_{P_n}(1) = f_n$.
\end{cor}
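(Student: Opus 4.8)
The plan is to use the recursion established in Theorem \ref{path polynomial} together with the standard Fibonacci recurrence. Evaluating the flood polynomial at $x=1$, Proposition \ref{size of graph} tells us $F_{P_n}(1) = |\mathcal{F}(P_n)|$, so it suffices to show the sequence $a_n := F_{P_n}(1)$ agrees with the Fibonacci numbers $f_n$ for all $n \geq 1$.

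First I would check the base cases directly. From Theorem \ref{path polynomial}, $F_{P_1}(x) = x$ and $F_{P_2}(x) = x^2$, so $a_1 = F_{P_1}(1) = 1 = f_1$ and $a_2 = F_{P_2}(1) = 1 = f_2$. Next, for $n \geq 3$, I would substitute $x = 1$ into the recursion $F_{P_n}(x) = x \cdot F_{P_{n-1}}(x) + x \cdot F_{P_{n-2}}(x)$ to obtain $a_n = a_{n-1} + a_{n-2}$. This is exactly the defining recurrence of the Fibonacci numbers. Since $a_n$ and $f_n$ satisfy the same second-order linear recurrence and agree on the two initial terms $n=1$ and $n=2$, a routine induction on $n$ gives $a_n = f_n$ for all $n \geq 1$.

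Alternatively, and more in the spirit of the combinatorial remark preceding the corollary, one could argue via the bijection $C \leftrightarrow \sigma$ where $\sigma_i = 1$ iff $v_i \in C$: by Theorem \ref{PPP Path}, $C \in \mathcal{F}(P_n)$ exactly when $C$ has the parallel path property, which translates to $\sigma$ being a binary string of length $n$ with $\sigma_1 = \sigma_n = 1$ and no two consecutive $0$'s. Such strings are counted by $f_n$, a well-known fact (and the cited identity), so $|\mathcal{F}(P_n)| = f_n$. I would probably present the recursion-based argument as the main proof since it is shortest given what has already been proved, and mention the bijective viewpoint as the conceptual reason.

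There is essentially no obstacle here: the work was already done in establishing the recursion in Theorem \ref{path polynomial}. The only thing to be careful about is the indexing convention for Fibonacci numbers — the paper uses $f_0 = 0$, $f_1 = 1$, so $f_1 = f_2 = 1$, and one must verify the initial conditions match this offset rather than some shifted version. Once that is confirmed, the induction is immediate.
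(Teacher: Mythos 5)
Your proposal is correct. It differs mildly in emphasis from the paper: the paper obtains the corollary directly from the bijection stated just before it, namely $C \leftrightarrow \sigma$ with $\sigma_i = 1$ iff $v_i \in C$, identifying $\mathcal{F}(P_n)$ with binary strings of length $n$ having first and last entry $1$ and no two consecutive $0$'s, a set well known to be counted by $f_n$; your secondary argument is exactly this. Your primary argument instead sets $x=1$ in the recursion of Theorem \ref{path polynomial} and checks that $a_n = F_{P_n}(1)$ satisfies the Fibonacci recurrence with matching initial values $a_1 = a_2 = 1$, which is a correct and fully self-contained derivation given what is already proved (indeed it is the same computation that, at the polynomial level, shows $F_{P_n}(x) = f_n(x)$, which the paper notes right after the corollary). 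The recursion route buys independence from the external combinatorial fact about binary strings, while the paper's bijective route is the one that delivers the ``new combinatorial interpretation'' the authors emphasize; either suffices, and your care about the indexing convention $f_1 = f_2 = 1$ is exactly the only point that needed checking.
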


There are many ways to define the \emph{Fibonacci polynomials}, but one such definition is they are the polynomials defined recursively as $f_0(x) = 0$, $f_1(x) = x$, and for $n \geq 2$, $f_n(x) = x\cdot f_{n-1}(x)+  x\cdot f_{n-2}(x)$ (see \cite{EoM_FibPolynomials}). It follows immediately from Theorem \ref{path polynomial} that $F_{P_n}(x) = f_n(x)$.

We now give a combinatorial interpretation of the coefficients of $f_n(x)$ in terms of flooding cascade sets of $P_n$.


\begin{cor} \label{fibonacci polynomial}
    If $f_n(x) = \sum_{k=0}^nf(n, k)x^k$, then $f(n, k)$ is the number $k$-element flooding cascade sets of $P_n$. 
\end{cor}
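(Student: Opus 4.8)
The plan is to observe that this corollary is an immediate consequence of two facts already established: Theorem \ref{path polynomial}, which shows $F_{P_n}(x)$ satisfies exactly the recursion $F_{P_n}(x) = x\cdot F_{P_{n-1}}(x) + x\cdot F_{P_{n-2}}(x)$ with initial data $F_{P_1}(x) = x$ and $F_{P_2}(x) = x^2$, and the recursive definition of the Fibonacci polynomials $f_n(x)$ with $f_0(x) = 0$, $f_1(x) = x$, and $f_n(x) = x\cdot f_{n-1}(x) + x\cdot f_{n-2}(x)$. First I would note that $f_2(x) = x\cdot f_1(x) + x\cdot f_0(x) = x^2$, so the initial conditions of the two sequences agree at $n=1$ and $n=2$, and since they obey the same second-order recurrence, a trivial induction gives $F_{P_n}(x) = f_n(x)$ for all $n \geq 1$ (this identification is already remarked just before the statement).

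Given that identification, the substance of the corollary is purely a matter of reading off coefficients. By Definition \ref{Def Flood Polynomial} and the discussion following it, $F_{P_n}(x) = \sum_{k=0}^n c_k x^k$ where $c_k$ is exactly the number of $k$-element flooding cascade sets of $P_n$. On the other hand, $f_n(x) = \sum_{k=0}^n f(n,k) x^k$ by the definition of the coefficients $f(n,k)$ in the statement. Since $F_{P_n}(x) = f_n(x)$ as polynomials, comparing coefficients of $x^k$ yields $f(n,k) = c_k$, which is precisely the claim that $f(n,k)$ counts the $k$-element flooding cascade sets of $P_n$.

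I anticipate there is essentially no obstacle here: the corollary is a restatement of Theorem \ref{path polynomial} in the language of Fibonacci polynomials, so the ``proof'' amounts to citing that theorem and the recursive definition of $f_n(x)$, checking the base cases match, and invoking uniqueness of polynomial coefficients. The only point requiring the tiniest care is confirming the base case $f_2(x) = x^2$ from the Fibonacci-polynomial recursion rather than assuming it; beyond that it is immediate. One could alternatively phrase the whole argument via the explicit bijection mentioned in the text between flooding cascade sets of $P_n$ and binary strings of length $n$ with no two consecutive $0$'s whose first and last entries are $1$, refining it to track the number of $1$'s, but the recursion-based argument is shorter and self-contained given the results already proved.
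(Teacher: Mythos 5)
Your proposal is correct and matches the paper's (implicit) argument exactly: the paper also derives $F_{P_n}(x) = f_n(x)$ directly from Theorem \ref{path polynomial} together with the recursive definition of the Fibonacci polynomials, and the corollary is then just a comparison of coefficients. The base-case check $f_2(x) = x^2$ you include is a sensible small addition, but nothing in the approach differs.
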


Many classical interpretations of the coefficients of the Fibonacci polynomials involve enumerating objects of size less than $n$ (see \cite{OEIS_A011973}).  An advantage of the combinatorial interpretations of both the $n$th Fibonacci number given in Corollary \ref{fibonacci numbers} and the coefficients of the $n$th Fibonacci polynomial given in Corollary \ref{fibonacci polynomial} is that it comes from a graph with $n$ vertices. 

\subsubsection{$P_{2, n}$}

The remainder of this subsection provides results regarding the parallel path graphs of size $2 \times n$.  

\begin{lemma} \label{2xn flooding conditions}
    Suppose that $C$ is a cascade set of $G = P_{2, n}$ that has the parallel path property. Then $C \in \FG$ if any only if for some $v_{1, j} \in C$, we have $\{v_{2, j-1}, v_{2, j}, v_{2, j+1}\}\cap C \neq \varnothing$.
\end{lemma}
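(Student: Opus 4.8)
The plan is to exploit the ladder structure of $P_{2,n}$. For an index $1\le j\le n$, write $\{v_{1,j},v_{2,j}\}$ for column $j$, and say column $j$ is \emph{full} in a vertex set $D$ if both of its vertices lie in $D$. The heart of the argument is a propagation claim: if $C$ has the parallel path property and some term $C_k$ of the cascade sequence has column $p$ full with $1\le p<n$, then some later term has column $p+1$ full (and symmetrically on the left). If column $p+1$ already contains a vertex $v_{r,p+1}\in C_k$, then the other vertex $v_{r',p+1}$ of that column has the two neighbors $v_{r,p+1}$ and $v_{r',p}$ in $C_k$ and so floods. If column $p+1$ is empty in $C$, then the parallel path property forces $p+1\neq n$ (by the conclusion of Lemma~\ref{parallel path empty column}) and forces column $p+2$ to be nonempty (by the conclusion of Lemma~\ref{2 column gap parallel paths} with $l=p+1$); then a vertex $v_{r,p+2}\in C_k$ lets $v_{r,p+1}$ flood off $v_{r,p}$ and $v_{r,p+2}$, after which $v_{r',p+1}$ floods off $v_{r',p}$ and $v_{r,p+1}$. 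Iterating left and right, once a single column is ever full every column eventually becomes full, so $\C=V(P_{2,n})$ and $C\in\mathcal{F}(P_{2,n})$.

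For the ``if'' direction I would use the hypothesis to create an initial full column. Suppose $v_{1,j}\in C$ with $\{v_{2,j-1},v_{2,j},v_{2,j+1}\}\cap C\neq\varnothing$. If $v_{2,j}\in C$, then column $j$ is full already in $C_0$. If $v_{2,j+1}\in C$ (the case $v_{2,j-1}\in C$ is symmetric), then $v_{1,j+1}$ has the two neighbors $v_{1,j}$ and $v_{2,j+1}$ in $C$, so $v_{1,j+1}\in C_1$ and column $j+1$ is full in $C_1$. In either case the propagation claim applies and $C\in\mathcal{F}(P_{2,n})$.

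For the ``only if'' direction I would argue by contraposition. Assume no $v_{1,j}\in C$ satisfies $\{v_{2,j-1},v_{2,j},v_{2,j+1}\}\cap C\neq\varnothing$; writing $A_D=\{j:v_{1,j}\in D\}$ and $B_D=\{j:v_{2,j}\in D\}$, this says $|a-b|\ge 2$ for all $a\in A_C$ and $b\in B_C$. I claim this ``row separation'' is preserved under one cascade step. When $A_D$ and $B_D$ are $2$-separated, a vertex $v_{1,j}\notin D$ can flood only when both $v_{1,j-1},v_{1,j+1}\in D$ — using $v_{2,j}$ as one of the two flooding neighbors would put $j\in B_D$ together with $j\pm1\in A_D$, contradicting separation — so $A_{D'}=A_D\cup\{j:j-1,\,j+1\in A_D\}$, and likewise $B_{D'}=B_D\cup\{j:j-1,\,j+1\in B_D\}$. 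A short case analysis on $j\in A_{D'}$ and $k\in B_{D'}$, pushing any newly added index back to an index of $A_D$ or $B_D$ two to its left, then shows $|j-k|\ge 2$ still holds. Since $V(P_{2,n})$ has $A=B=\{1,\dots,n\}$, which is not $2$-separated, no term of the cascade sequence can equal $V(P_{2,n})$, so $C\notin\mathcal{F}(P_{2,n})$.

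The step I expect to be the main obstacle is the propagation claim, in particular the boundary bookkeeping: one must verify that an empty column immediately to the right of a full column is not the last column and is therefore followed by a nonempty column, which is exactly where both halves of the parallel path property are used. By contrast, producing the first full column is immediate, and once the formula $A_{D'}=A_D\cup\{j:j\pm1\in A_D\}$ is established the separation case analysis is routine.
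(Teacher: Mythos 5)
Your proposal is correct, and it reaches the lemma by a route that differs from the paper's in both directions. For the ``if'' direction the paper inducts on $n$: it produces a flooded column from the hypothesis and then splits the ladder at that column into two smaller copies of $P_{2,k}$, applying the induction hypothesis to each; your non-inductive propagation claim (a full column forces the adjacent column to fill, using column $n$ nonempty to rule out $p+1=n$ and the two-column-gap condition with $l=p+1$ to find a vertex in column $p+2$) accomplishes the same spreading argument directly, and your boundary bookkeeping is exactly right — the dichotomy ``column $p+1$ meets $C_k$'' versus ``column $p+1$ is empty in $C$'' is exhaustive since $C\subseteq C_k$, and the leftward case is genuinely symmetric because the parallel path property is invariant under reversing the column order. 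For the ``only if'' direction the difference is more substantial: the paper computes the stabilized set explicitly, organizing $C$ into staggered row-$1$ and row-$2$ runs via the indices $k_1<k_2<\dots$ and checking that no vertex outside $C_1$ has two flooded neighbors, whereas you exhibit a cascade-preserved invariant (the $2$-separation of the index sets $A_D$ and $B_D$), which the full vertex set violates, so $\overline{C}\neq V$. Your invariant argument is shorter and avoids the run-by-run bookkeeping (and, as a bonus, does not use the parallel path property at all in that direction), at the cost of not identifying $\overline{C}$ exactly, which the paper's computation does; the verification that $A_{D'}=A_D\cup\{j: j-1,j+1\in A_D\}$ under $2$-separation and that $2$-separation persists is the routine case check you describe (your phrase ``two to its left'' is slightly off — a new index $j$ has $j-1$ and $j+1$ in the old set — but the cases all close). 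I see no gap.
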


\begin{proof}
    Suppose that $C$ is a cascade set of $G = P_{2, n}$ that has the parallel path property and that for some $v_{1, j} \in C$, we have $\{v_{2, j-1}, v_{2, j}, v_{2, j+1}\}\cap C \neq \varnothing$, we want to show that $C \in \FG$.  We will prove this by inducting on $n$.

    If $n=1$, then $G = P_{2, 1}$.  The only cascade set that satisfies the hypotheses is $\{v_{1, 1}, v_{2, 1}\}$ and this floods $P_{2, 1}$.

    Now suppose $n > 1$ and the result holds for all $P_{2, k}$ where $1 \leq k < n$.  Let $C$ be a cascade set that has the parallel path property and that for some $v_{1, j} \in C$, we have $\{v_{2, j-1}, v_{2, j}, v_{2, j+1}\}\cap C \neq \varnothing$.  

    If $v_{1, 1} \in C$, and $\{v_{2, 1}, v_{2, 2}\}\cap C \neq \varnothing$, then $\{v_{1, 2}, v_{2, 2}\} \subseteq C_2$ because $\{v_{1, 2}, v_{2, 2}, v_{1, 3}, v_{2, 3}\} \cap C \neq \varnothing$. 

    \begin{center}
\begin{tikzpicture}
    \node at (-.5, .5) {$C:$};
    \node (v1) at (0,1) [flood, label = above: $v_{1,1}$]  {}; 
    \node (v2) at (0,0) [flood, label = below: $v_{2,1}$] {};
    \node (v3) at (1,1) [vertex, label = above: $v_{1,2}$] {};
    \node (v4) at (1,0) [vertex, label = below: $v_{2,2}$] {};
    \node (v5) at (2,1) [vertex, label = above: $v_{1,3}$] {};
    \node (v6) at (2,0) [flood, label = below: $v_{2,3}$] {};

    \draw [-] (v1) to (v3);
    \draw [-] (v1) to (v2); 
    \draw [-] (v2) to (v4);
    \draw [-] (v3) to (v5);
    \draw [-] (v3) to (v4);
    \draw [-] (v4) to (v6);
    \draw [-] (v5) to (v6);
    \draw [-, dashed] (v5) to (2.5, 1);
    \draw [-, dashed] (v6) to (2.5, 0);

    \node at (4, .5) {$C_1:$};
    \node (v1) at (4.5,1) [flood, label = above: $v_{1,1}$]  {}; 
    \node (v2) at (4.5,0) [flood, label = below: $v_{2,1}$] {};
    \node (v3) at (5.5,1) [vertex, label = above: $v_{1,2}$] {};
    \node (v4) at (5.5,0) [flood, label = below: $v_{2,2}$] {};
    \node (v5) at (6.5,1) [vertex, label = above: $v_{1,3}$] {};
    \node (v6) at (6.5,0) [flood, label = below: $v_{2,3}$] {};

    \draw [-] (v1) to (v3);
    \draw [-] (v1) to (v2); 
    \draw [-] (v2) to (v4);
    \draw [-] (v3) to (v5);
    \draw [-] (v3) to (v4);
    \draw [-] (v4) to (v6);
    \draw [-] (v5) to (v6);
    \draw [-, dashed] (v5) to (6, 1);
    \draw [-, dashed] (v6) to (6, 0);

    \node at (8.5, .5) {$C_2:$};
    \node (v1) at (9,1) [flood, label = above: $v_{1,1}$]  {}; 
    \node (v2) at (9,0) [flood, label = below: $v_{2,1}$] {};
    \node (v3) at (10,1) [flood, label = above: $v_{1,2}$] {};
    \node (v4) at (10,0) [flood, label = below: $v_{2,2}$] {};
    \node (v5) at (11,1) [vertex, label = above: $v_{1,3}$] {};
    \node (v6) at (11,0) [flood, label = below: $v_{2,3}$] {};

    \draw [-] (v1) to (v3);
    \draw [-] (v1) to (v2); 
    \draw [-] (v2) to (v4);
    \draw [-] (v3) to (v5);
    \draw [-] (v3) to (v4);
    \draw [-] (v4) to (v6);
    \draw [-] (v5) to (v6);
    \draw [-, dashed] (v5) to (11.5, 1);
    \draw [-, dashed] (v6) to (11.5, 0);
\end{tikzpicture}
\end{center}

    The subgraph consisting of all vertices except $\{v_{1, 1}, v_{2, 1}\}$ is isomorphic to $P_{2, {n-1}}$ and by induction the subset of $C_2$ consisting of only the vertices of $C_2$ in this subgraph floods this subgraph.  Therefore $C \in \FG$.

    A similar argument can be made to show that if $v_{1, n} \in C$, and $\{v_{2, n-1}, v_{2, n}\}\cap C \neq \varnothing$, then $C \in \FG$.

    If $v_{1, j} \in C$ and $\{v_{2, j-1}, v_{2, j}, v_{2, j+1}\}\cap C \neq \varnothing$ for some $1<j<n$, then $\{v_{1, j}, v_{2, j}\} \subseteq C_1$.  By induction $C_1$ floods both the subgraph consisting of vertices $\{v_{1, 1}, v_{2, 1}, \dots, v_{1, j}, v_{2, j}\}$ and the subgraph consisting of vertices $\{v_{1, j}, v_{2, j}, \dots, v_{1, n}, v_{2, n}\}$. Therefore $C \in \FG$.

    Now suppose that $C$ has the parallel path property and that for all $j$, whenever $v_{1, j} \in C$ we have that $\{v_{2, j-1}, v_{2, j}, v_{2, j+1}\} \cap C = \varnothing$.
    By Lemma \ref{parallel path empty column} and our hypothesis, we know that either $v_{1, 1} \in C$ or $v_{2, 1} \in C$, but not both.  By symmetry, we can assume that $v_{1, 1} \in C$.  Let $k_1$ be the smallest number such that $v_{2, k_1} \in C$. It is possible that no such $k_1$ exists, but if it does, then $k_1 \geq 3$.  If $k_1$ exists, let $k_2$ be the smallest number such that $k_1 < k_2$ and $v_{1, k_2} \in C$.  Note that if $k_2$ exists, then $k_2 \geq k_1+2$.  Define $k_l$ similarly so that $k_{l-1}<k_l$ and $v_{(l \textrm{ mod } 2) + 1, k_l} \in C$.  Let $L$ be the largest number such that $k_L$ is defined and let $a = 1$ if $L$ is even and let $a=2$ if $L$ is odd.

    It follows that $C_1 = \{v_{1, 1} \dots v_{1, k_{1}-2}\} \cup \{v_{2, k_l} \dots v_{2, k_{2}-2}\} \cup \dots \cup \{v_{a, k_L} \dots v_{a, n}\}$.

        \begin{center}
\begin{tikzpicture}
    \node at (-.5, .5) {$C:$};
    \node (v11) at (0,1) [flood, label = above: $v_{1,1}$]  {}; 
    \node (v12) at (0,0) [vertex] {};
    \node (v13) at (1,1) [flood] {};
    \node (v14) at (1,0) [vertex] {};
    \node (v15) at (2,1) [vertex] {};
    \node (v16) at (2,0) [vertex] {};

    \node (v21) at (3.5,1) [flood, label = above: $v_{1,k_1-2}$]  {}; 
    \node (v22) at (3.5,0) [vertex] {};
    \node (v23) at (4.5,1) [vertex] {};
    \node (v24) at (4.5,0) [vertex] {};
    \node (v25) at (5.5,1) [vertex] {};
    \node (v26) at (5.5,0) [flood, label = below: $v_{2,k_1}$] {};
    \node (v27) at (6.5,1) [vertex] {};
    \node (v28) at (6.5,0) [vertex] {};
    \node (v29) at (7.5,1) [vertex] {};
    \node (v210) at (7.5,0) [flood] {};

    \node (v31) at (9,1) [vertex]  {}; 
    \node (v32) at (9,0) [flood, label = below: $v_{2,k_2-2}$] {};
    \node (v33) at (10,1) [vertex] {};
    \node (v34) at (10,0) [vertex] {};
    \node (v35) at (11,1) [flood, label = above: $v_{1,k_2}$] {};
    \node (v36) at (11,0) [vertex] {};
    \node (v37) at (12,1) [flood] {};
    \node (v38) at (12,0) [vertex] {};
    \node (v39) at (13,1) [flood] {};
    \node (v310) at (13,0) [vertex] {};

    \draw [-] (v11) to (v13);
    \draw [-] (v11) to (v12); 
    \draw [-] (v12) to (v14);
    \draw [-] (v13) to (v15);
    \draw [-] (v13) to (v14);
    \draw [-] (v14) to (v16);
    \draw [-] (v15) to (v16);
    \draw [-, dashed] (v15) to (2.5, 1);
    \draw [-, dashed] (v16) to (2.5, 0);

    \draw [-, dashed] (v21) to (3, 1);
    \draw [-, dashed] (v22) to (3, 0);
    \draw [-] (v21) to (v23);
    \draw [-] (v21) to (v22); 
    \draw [-] (v22) to (v24);
    \draw [-] (v23) to (v25);
    \draw [-] (v23) to (v24);
    \draw [-] (v24) to (v26);
    \draw [-] (v25) to (v26);
    \draw [-] (v26) to (v28);
    \draw [-] (v27) to (v28);
    \draw [-] (v25) to (v27);
    \draw [-] (v28) to (v210);
    \draw [-] (v29) to (v210);
    \draw [-] (v27) to (v29);
    \draw [-, dashed] (v29) to (8, 1);
    \draw [-, dashed] (v210) to (8, 0);

    \draw [-, dashed] (v31) to (8.5, 1);
    \draw [-, dashed] (v32) to (8.5, 0);
    \draw [-] (v31) to (v33);
    \draw [-] (v31) to (v32); 
    \draw [-] (v32) to (v34);
    \draw [-] (v33) to (v35);
    \draw [-] (v33) to (v34);
    \draw [-] (v34) to (v36);
    \draw [-] (v35) to (v36);
    \draw [-] (v36) to (v38);
    \draw [-] (v37) to (v38);
    \draw [-] (v35) to (v37);
    \draw [-] (v38) to (v310);
    \draw [-] (v39) to (v310);
    \draw [-] (v37) to (v39);
    \draw [-, dashed] (v39) to (13.5, 1);
    \draw [-, dashed] (v310) to (13.5, 0);

\end{tikzpicture}

\begin{tikzpicture}
    \node at (-.5, .5) {$C_1:$};
    \node (v11) at (0,1) [flood, label = above: $v_{1,1}$]  {}; 
    \node (v12) at (0,0) [vertex] {};
    \node (v13) at (1,1) [flood] {};
    \node (v14) at (1,0) [vertex] {};
    \node (v15) at (2,1) [flood] {};
    \node (v16) at (2,0) [vertex] {};

    \node (v21) at (3.5,1) [flood, label = above: $v_{1,k_1-2}$]  {}; 
    \node (v22) at (3.5,0) [vertex] {};
    \node (v23) at (4.5,1) [vertex] {};
    \node (v24) at (4.5,0) [vertex] {};
    \node (v25) at (5.5,1) [vertex] {};
    \node (v26) at (5.5,0) [flood, label = below: $v_{2,k_1}$] {};
    \node (v27) at (6.5,1) [vertex] {};
    \node (v28) at (6.5,0) [flood] {};
    \node (v29) at (7.5,1) [vertex] {};
    \node (v210) at (7.5,0) [flood] {};

    \node (v31) at (9,1) [vertex]  {}; 
    \node (v32) at (9,0) [flood, label = below: $v_{2,k_2-2}$] {};
    \node (v33) at (10,1) [vertex] {};
    \node (v34) at (10,0) [vertex] {};
    \node (v35) at (11,1) [flood, label = above: $v_{1,k_2}$] {};
    \node (v36) at (11,0) [vertex] {};
    \node (v37) at (12,1) [flood] {};
    \node (v38) at (12,0) [vertex] {};
    \node (v39) at (13,1) [flood] {};
    \node (v310) at (13,0) [vertex] {};

    \draw [-] (v11) to (v13);
    \draw [-] (v11) to (v12); 
    \draw [-] (v12) to (v14);
    \draw [-] (v13) to (v15);
    \draw [-] (v13) to (v14);
    \draw [-] (v14) to (v16);
    \draw [-] (v15) to (v16);
    \draw [-, dashed] (v15) to (2.5, 1);
    \draw [-, dashed] (v16) to (2.5, 0);

    \draw [-, dashed] (v21) to (3, 1);
    \draw [-, dashed] (v22) to (3, 0);
    \draw [-] (v21) to (v23);
    \draw [-] (v21) to (v22); 
    \draw [-] (v22) to (v24);
    \draw [-] (v23) to (v25);
    \draw [-] (v23) to (v24);
    \draw [-] (v24) to (v26);
    \draw [-] (v25) to (v26);
    \draw [-] (v26) to (v28);
    \draw [-] (v27) to (v28);
    \draw [-] (v25) to (v27);
    \draw [-] (v28) to (v210);
    \draw [-] (v29) to (v210);
    \draw [-] (v27) to (v29);
    \draw [-, dashed] (v29) to (8, 1);
    \draw [-, dashed] (v210) to (8, 0);

    \draw [-, dashed] (v31) to (8.5, 1);
    \draw [-, dashed] (v32) to (8.5, 0);
    \draw [-] (v31) to (v33);
    \draw [-] (v31) to (v32); 
    \draw [-] (v32) to (v34);
    \draw [-] (v33) to (v35);
    \draw [-] (v33) to (v34);
    \draw [-] (v34) to (v36);
    \draw [-] (v35) to (v36);
    \draw [-] (v36) to (v38);
    \draw [-] (v37) to (v38);
    \draw [-] (v35) to (v37);
    \draw [-] (v38) to (v310);
    \draw [-] (v39) to (v310);
    \draw [-] (v37) to (v39);
    \draw [-, dashed] (v39) to (13.5, 1);
    \draw [-, dashed] (v310) to (13.5, 0);

\end{tikzpicture}

\end{center}

    There are no elements of $V(P_{2, n})- C_1$ that are neighbors with two elements of $C_1$.  Therefore $\C = C_1$ and $C \notin \FG$ as desired.
\end{proof}


We showed in Lemma \ref{parallel path empty column} and Lemma \ref{2 column gap parallel paths} that in order for a cascade set to be an element of $\mathcal{F}(P_{2, n})$ it must have the parallel path property.  Therefore the set of flooding cascade sets of $P_{2, n}$ is equal to the number of flooding cascade sets of $P_{2, n}$ that have the parallel path property.  This fact will be used with the following two lemmas to give a recursion for the flood polynomial of $P_{2, n}$.

\begin{lemma} \label{A formula}
    Let $\mathcal{P}_n$ be the set of cascade sets of $P_{2, n}$ with the parallel path property and let $A_n(x)$ be the sequence of polynomials defined by $A_n(x) = \sum_{C \in \mathcal{P}_n}x^{|C|}$.  Then $A_1(x) = x^2+2x, A_2(x) = (x^2+2x)^2$, and for $n\geq 3$, $A_n(x) = (x^2+2x)(A_{n-1}(x)+A_{n-2}(x)).$
\end{lemma}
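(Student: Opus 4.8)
The plan is to reformulate membership in $\mathcal{P}_n$ as a constraint on a length-$n$ sequence of ``column states'' and then derive the recursion by conditioning on the last one or two columns. Since $P_{2,n}$ has exactly two rows, a cascade set $C$ is determined by recording, for each $j$, the \emph{state} of column $j$, namely which subset of $\{v_{1,j}, v_{2,j}\}$ lies in $C$: one of $\varnothing$, $\{v_{1,j}\}$, $\{v_{2,j}\}$, $\{v_{1,j},v_{2,j}\}$, contributing $x^0$, $x^1$, $x^1$, $x^2$ respectively to $x^{|C|}$. By Lemma \ref{parallel path empty column} and Lemma \ref{2 column gap parallel paths}, $C$ has the parallel path property exactly when column $1$ and column $n$ are nonempty and no two consecutive columns are both empty. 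Writing $p := x^2+2x$ for the sum of the weights of the three nonempty states, the base cases are immediate: in $P_{2,1}$ the lone column must be nonempty, so $A_1(x) = p = x^2+2x$; in $P_{2,2}$ both columns are nonempty (they are the first and the last), so $A_2(x) = p^2 = (x^2+2x)^2$.

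For $n \geq 3$ I would split $\mathcal{P}_n$ by whether column $n-1$ is empty. If column $n-1$ is nonempty, then deleting column $n$ leaves a sequence on columns $1,\dots,n-1$ with nonempty first and last columns and no two consecutive empties, i.e.\ an element of $\mathcal{P}_{n-1}$; conversely, any element of $\mathcal{P}_{n-1}$ followed by any nonempty state in column $n$ lies in $\mathcal{P}_n$, since the only new adjacent pair $(n-1,n)$ cannot be two empties. This gives a weight-preserving bijection with $\mathcal{P}_{n-1}$ times the three nonempty states of column $n$, contributing $p\cdot A_{n-1}(x)$. If column $n-1$ is empty, then column $n-2$ must be nonempty, and deleting columns $n-1$ and $n$ leaves an element of $\mathcal{P}_{n-2}$ (nonempty first column $1$, nonempty last column $n-2$, no two consecutive empties); conversely, appending an empty column $n-1$ and any nonempty column $n$ to an element of $\mathcal{P}_{n-2}$ produces an element of $\mathcal{P}_n$, because the new adjacent pairs $(n-2,n-1)$ and $(n-1,n)$ are each not two empties. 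This contributes $p\cdot 1 \cdot A_{n-2}(x)$. Summing the two cases yields $A_n(x) = p\bigl(A_{n-1}(x)+A_{n-2}(x)\bigr) = (x^2+2x)\bigl(A_{n-1}(x)+A_{n-2}(x)\bigr)$, as claimed.

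The step needing the most care is verifying that the truncated prefixes land in $\mathcal{P}_{n-1}$ or $\mathcal{P}_{n-2}$ on the nose --- in particular that the new last column of the prefix is forced to be nonempty (this is exactly where the ``no two consecutive empties'' hypothesis on $\mathcal{P}_n$ is used), and that re-extending never creates two consecutive empty columns because the freshly appended column $n$ is nonempty. This is also the only place $n \geq 3$ enters, ensuring the indices $1$, $n-2$, $n-1$, $n$ are genuinely in range and behave as the argument requires; everything else is bookkeeping with the weights.
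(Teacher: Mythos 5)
Your proof is correct and takes essentially the same route as the paper: peel off the (necessarily nonempty) last column, which supplies the factor $x^2+2x$, and case on whether column $n-1$ is empty to land in $\mathcal{P}_{n-1}$ or $\mathcal{P}_{n-2}$. The paper phrases this via $\hat{C}=C\cap\{v_{1,n},v_{2,n}\}$ and $C^{*}=C-\hat{C}$ and calls the reverse map straightforward; your explicit two-way verification is just a more spelled-out version of the same decomposition.
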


\begin{proof}
    The elements of $\mathcal{P}_1$ are $\{v_{1, 1}\}, \{v_{2, 1}\}$ and $\{v_{1, 1}, v_{2, 1}\}$.  Therefore $A_1(x) = x^2+2x$.
    The elements of $\mathcal{P}_2$ are the cascade sets shown in Example \ref{Flood Set} as well as $\{v_{1, 1}, v_{1, 2}\}$ and $ \{v_{2, 1}, v_{2, 2}\}$.  Therefore $A_2(x) = x^4+4x^3+4x^2 = (x^2+2x)^2$.

    We will now show that $A_n(x) = (x^2+2x)( \sum_{C \in \mathcal{P}_{n-1}}x^{|C|} + \sum_{C \in \mathcal{P}_{n-2}}x^{|C|})$ by showing that each element of $\mathcal{P}_n$ can be expressed as the union of a nonempty subset of $\{v_{1, n}, v_{2, n}\}$ with an element of $\mathcal{P}_{n-1}$ or $\mathcal{P}_{n-2}$.

    Let $C \in \mathcal{P}_n$ and let $\hat{C} = C \cap \{v_{1, n}, v_{2, n}\}$ and $C^* = C-\hat{C}$.  Informally speaking, $\hat{C}$ is the elements in $C$ that appear in the last column of $P_{2, n}$ and $C^*$ is the set of elements in $C$ that appear in the first $n-1$ columns of $P_{2, n}$.  By the parallel path property we know that $\hat{C} \neq \varnothing$.  This means that $\hat{C} = \{v_{1, n}\}$, $\hat{C} = \{v_{2, n}\}$, or $\hat{C} = \{v_{1, n}, v_{2, n}\}$ and none of these possibilities can cause $C$ to fail the parallel path property. If $C^* \cap \{v_{1, n-1}, v_{2, n-1}\} \neq \varnothing$, then $C^* \in \mathcal{P}_{n-1}$.  If $C^* \cap \{v_{1, n-1}, v_{2, n-1}\} = \varnothing$, then by the parallel path property, $C^* \cap \{v_{1, n-2}, v_{2, n-2}\} \neq \varnothing$ and hence $C^* \in \mathcal{P}_{n-2}$.  It is straightforward to reverse this map. Therefore $A_n(x) = (x^2+2x)( \sum_{C \in \mathcal{P}_{n-1}}x^{|C|} + \sum_{C \in \mathcal{P}_{n-2}}x^{|C|})$ as desired. 
\end{proof}

\begin{lemma} \label{B formula}
    Let $\tilde{\mathcal{P}}_n$ be the set of non-flooding cascade sets of $P_{2, n}$ with the parallel path property and let $B_n(x)$ be the sequence of polynomials defined by $ B_n(x) = \sum_{C \in \hat{\mathcal{P}}_n}x^{|C|}$. Then $B_1(x) = 2x, B_2(x) = 2x^2$, and for $n\geq 3$, $B_n(x) = x(B_{n-1}(x)+2\cdot B_{n-2}(x)).$
\end{lemma}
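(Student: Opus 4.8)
The plan is to mimic the proof of Lemma~\ref{A formula}, partitioning $\tilde{\mathcal{P}}_n$ according to the last two columns of $P_{2,n}$, but now carrying along the non-flooding criterion of Lemma~\ref{2xn flooding conditions}. First I would record the structure forced by membership in $\tilde{\mathcal{P}}_n$: since such a $C$ has the parallel path property and does not flood, Lemma~\ref{2xn flooding conditions} — together with its row-reversed version, which is valid because $P_{2,n}$ has an automorphism interchanging the two rows — tells us that whenever $v_{r,j}\in C$, none of $v_{\bar r,j-1}, v_{\bar r, j}, v_{\bar r, j+1}$ lies in $C$, where $\bar r$ denotes the row other than $r$. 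Two consequences I would pull out: (i) each column of $P_{2,n}$ meets $C$ in at most one vertex, so by the parallel path property column $n$ is a singleton $\{v_{r,n}\}$; and (ii) if column $n-1$ is nonempty, then it equals $\{v_{r,n-1}\}$, the same row as column $n$. The base cases are direct: $P_{2,1}$ is a single edge, so $\tilde{\mathcal{P}}_1=\{\{v_{1,1}\},\{v_{2,1}\}\}$ and $B_1(x)=2x$; and of the cascade sets with the parallel path property for $P_{2,2}=C_4$ listed in the proof of Lemma~\ref{A formula}, exactly $\{v_{1,1},v_{1,2}\}$ and $\{v_{2,1},v_{2,2}\}$ fail to flood, so $B_2(x)=2x^2$.

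For $n\geq 3$, I would split $\tilde{\mathcal{P}}_n$ into the sets $C$ with column $n-1$ nonempty and the sets $C$ with column $n-1$ empty. In the first case, by (ii) column $n-1$ is $\{v_{r,n-1}\}$, and I claim $C\mapsto C\setminus\{v_{r,n}\}$ is a bijection onto $\tilde{\mathcal{P}}_{n-1}$ that drops the cardinality by one. Going forward, the restriction has the parallel path property because columns $1$ and $n-1$ remain nonempty and the interior two-column conditions are inherited, and it does not flood because its defining non-flooding conditions involve only columns $\leq n-1$ (the condition at column $n-1$ sees $v_{\bar r, n-2}, v_{\bar r, n-1}$, which are excluded since $v_{r,n-1}\in C$). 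Going backward, if $C'\in\tilde{\mathcal{P}}_{n-1}$ has last column $\{v_{s,n-1}\}$, then $C'\cup\{v_{s,n}\}$ lies in $\tilde{\mathcal{P}}_n$: the new conditions (those that reach column $n$) hold because $v_{\bar s,n}\notin C'\cup\{v_{s,n}\}$ and, by (i) applied to $v_{s,n-1}$, $v_{\bar s,n-1}\notin C'$. This case therefore contributes $x\,B_{n-1}(x)$.

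In the second case, the parallel path property forces column $n-2$ to be nonempty — for $n\geq 4$ this is the two-column condition at $l=n-2$, and for $n=3$ it is simply that column $1$ is nonempty — hence column $n-2$ is a singleton. Now $C\mapsto(C\setminus\{v_{r,n}\},\,r)$, where $\{v_{r,n}\}$ is column $n$, is a bijection onto $\tilde{\mathcal{P}}_{n-2}\times\{1,2\}$; the verifications that restricting to columns $\leq n-2$ and re-attaching a column-$n$ vertex preserve the parallel path property and non-flooding are of the same kind as above. The essential difference from the first case is that here the row $r$ of the reattached vertex is unconstrained by the rest of $C$, because the non-flooding condition arising from $v_{r,n}$ only involves columns $n-1$ and $n$ and column $n-1$ is empty; both values of $r$ are thus legitimate. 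Hence this case contributes $2x\,B_{n-2}(x)$, and summing the two cases gives $B_n(x)=x\bigl(B_{n-1}(x)+2B_{n-2}(x)\bigr)$.

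I expect the only genuine difficulty to be the boundary bookkeeping — aligning the ``first/last column nonempty'' requirements for $\tilde{\mathcal{P}}_{n-1}$ and $\tilde{\mathcal{P}}_{n-2}$ with the non-flooding conditions near columns $n-2,n-1,n$ of $P_{2,n}$ — and confirming that the small case $n=3$ behaves like the generic case, which it does since column $n-2$ is then column $1$ and is kept nonempty by the parallel path property. The rest is routine checking of the parallel path property and of the criterion in Lemma~\ref{2xn flooding conditions} for the restricted and extended cascade sets.
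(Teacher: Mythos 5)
Your proof is correct and follows essentially the same route as the paper: peel off the (necessarily singleton) last column, splitting according to whether column $n-1$ is occupied, so that exactly one row choice extends an element of $\tilde{\mathcal{P}}_{n-1}$ and both row choices extend an element of $\tilde{\mathcal{P}}_{n-2}$. You simply make explicit (via the ``no opposite-row vertices within column distance one'' reformulation of Lemma \ref{2xn flooding conditions}) the boundary checks that the paper's proof states only briefly.
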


\begin{proof}
    The elements of $\tilde{\mathcal{P}}_1$ are $\{v_{1, 1}\}$ and $\{v_{2, 1}\}$.  Therefore $B_1(x) = 2x$.
    The elements of $\tilde{\mathcal{P}}_2$ are $\{v_{1, 1}, v_{1, 2}\}$ and $ \{v_{2, 1}, v_{2, 2}\}$.  Therefore $B_2(x) = 2x^2$.

    We will now show that $B_n(x) = x(\sum_{C \in \tilde{\mathcal{P}}_{n-1}}x^{|C|} + 2\sum_{C \in \tilde{\mathcal{P}}_{n-2}}x^{|C|})$ by showing that each element of $\tilde{\mathcal{P}}_n$ can be expressed as the union of a one-element subset of $\{v_{1, n}, v_{2, n}\}$ with an element of $\tilde{\mathcal{P}}_{n-1}$ or $\tilde{\mathcal{P}}_{n-2}$.

    As in the proof of Lemma \ref{A formula}, let $C \in \mathcal{P}_n$, $\hat{C} = C \cap \{v_{1, n}, v_{2, n}\}$, and $C^* = C-\hat{C}$.  Informally speaking, $\hat{C}$ is the elements in $C$ that appear in the last column of $P_{2, n}$ and $C^*$ is the set of elements in $C$ that appear in the first $n-1$ columns of $P_{2, n}$.  By Lemma \ref{2xn flooding conditions}, either $\hat{C} = \{v_{1, n}\}$ or $\hat{C} = \{v_{2, n}\}$.

    If $C^* \cap \{v_{1, n-1}, v_{2, n-1}\} \neq \varnothing$, then $C^* \in \mathcal{P}_{n-1}$.  Exactly one of the two possibilities for $\hat{C}$ will meet the requirements listed in \ref{2xn flooding conditions}.

    If $C^* \cap \{v_{1, n-1}, v_{2, n-1}\} = \varnothing$, then by the parallel path property, $C^* \cap \{v_{1, n-2}, v_{2, n-2}\} \neq \varnothing$ and hence $C^* \in \mathcal{P}_{n-2}$.  Either of the two possibilities for $\hat{C}$ will meet the requirements listed in \ref{2xn flooding conditions}.  Therefore $B_n(x) = x(\sum_{C \in \tilde{\mathcal{P}}_{n-1}}x^{|C|} + 2\sum_{C \in \tilde{\mathcal{P}}_{n-2}}x^{|C|})$ as desired.
\end{proof}

We now have the results necessary to give a recursion for the flood polynomials of parallel path graphs of size $2\times n$.

\begin{thm}
    Using the definition of $A_n(x)$ and $B_n(x)$ given in Lemma \ref{A formula} and Lemma \ref{B formula}, we have $F_{P_{2, n}}(x) = A_n(x)- B_n(x)$.
\end{thm}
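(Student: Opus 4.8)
\section*{Proof proposal}

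The plan is to observe that this theorem is essentially an inclusion--exclusion bookkeeping statement that combines the three preceding lemmas, so the proof should be short. The key point is that, by Lemma \ref{parallel path empty column} and Lemma \ref{2 column gap parallel paths}, \emph{every} flooding cascade set of $P_{2,n}$ has the parallel path property. Hence the set $\mathcal{P}_n$ of cascade sets of $P_{2,n}$ with the parallel path property decomposes as a disjoint union of the flooding cascade sets $\mathcal{F}(P_{2,n})$ and the non-flooding cascade sets with the parallel path property, which is precisely the set $\tilde{\mathcal{P}}_n$ from Lemma \ref{B formula}.

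First I would record the set-theoretic identity
\[
\mathcal{P}_n = \mathcal{F}(P_{2,n}) \;\sqcup\; \tilde{\mathcal{P}}_n.
\]
For the inclusion $\mathcal{F}(P_{2,n}) \subseteq \mathcal{P}_n$ I invoke Lemma \ref{parallel path empty column} and Lemma \ref{2 column gap parallel paths}. Conversely, any $C \in \mathcal{P}_n$ either floods $P_{2,n}$, in which case $C \in \mathcal{F}(P_{2,n})$, or it does not, in which case $C$ is a non-flooding cascade set with the parallel path property, i.e.\ $C \in \tilde{\mathcal{P}}_n$; and these two cases are mutually exclusive. This gives the claimed disjoint union.

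Next I would pass to generating functions. Summing $x^{|C|}$ over the disjoint union and using the definitions $A_n(x) = \sum_{C \in \mathcal{P}_n} x^{|C|}$ from Lemma \ref{A formula} and $B_n(x) = \sum_{C \in \tilde{\mathcal{P}}_n} x^{|C|}$ from Lemma \ref{B formula}, together with $F_{P_{2,n}}(x) = \sum_{C \in \mathcal{F}(P_{2,n})} x^{|C|}$ from Definition \ref{Def Flood Polynomial}, I obtain
\[
A_n(x) = F_{P_{2,n}}(x) + B_n(x),
\]
and rearranging yields $F_{P_{2,n}}(x) = A_n(x) - B_n(x)$, as desired.

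There is no serious obstacle here: all of the substantive work---establishing that flooding forces the parallel path property, and extracting the recursions for $A_n$ and $B_n$---has already been done in Lemmas \ref{parallel path empty column}, \ref{2 column gap parallel paths}, \ref{A formula}, and \ref{B formula}. The only thing to be careful about is making the disjointness explicit (a cascade set cannot simultaneously be flooding and non-flooding), so that the generating functions genuinely add; everything else is a one-line rearrangement. If anything, I might also note as a remark that combining this theorem with Lemmas \ref{A formula} and \ref{B formula} gives a self-contained recursion for $F_{P_{2,n}}(x)$ directly, without reference to the auxiliary sequences.
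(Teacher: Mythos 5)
Your proposal is correct and matches the paper's argument: the paper likewise observes that every flooding cascade set of $P_{2,n}$ has the parallel path property, so $\mathcal{P}_n$ splits into the flooding sets and $\tilde{\mathcal{P}}_n$, and subtracting the weight generating functions gives $F_{P_{2,n}}(x) = A_n(x) - B_n(x)$. If anything, your citation of Lemma \ref{parallel path empty column} and Lemma \ref{2 column gap parallel paths} for the inclusion $\mathcal{F}(P_{2,n}) \subseteq \mathcal{P}_n$ is the more precise reference, and making the disjointness explicit is a welcome touch.
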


\begin{proof}
    By Lemma \ref{2xn flooding conditions}, we know that every flooding cascade set satisfies the parallel path property.  Combining that fact with Lemma \ref{A formula} and Lemma \ref{B formula} gives the desired result.
\end{proof}

We should note that the sequence $F_{P_{2, n}}(1)$ is not currently on the On-Line Encyclopedia of Integers Sequences, but $A_n(1)$ gives sequence A106435 (\cite{OEIS_A106435}) and $\frac{1}{2} B_n(1)$ gives the sequence A001045 (\cite{OEIS_A001045}) which is the Jacobsthal sequence.



We conclude our discussion of parallel path graphs by noting a recursion for $F_{P_{m, n}}(x)$ for $m > 2$ is unknown. The following example demonstrates why this question is hard, even when $m=3$.

\begin{example}
    The following cascade set satisfies the parallel path property as well as conditions similar to those stated in Lemma \ref{2xn flooding conditions}, but it does not flood the graph.  

\begin{center}
    
\begin{tikzpicture}
    \node (v00) at (0,0) [flood]  {}; 
    \node (v01) at (0,1) [vertex] {};
    \node (v02) at (0,2) [vertex] {};
    \node (v10) at (1,0) [vertex]  {}; 
    \node (v11) at (1,1) [flood] {};
    \node (v12) at (1,2) [vertex] {};
    \node (v20) at (2,0) [vertex]  {}; 
    \node (v21) at (2,1) [vertex] {};
    \node (v22) at (2,2) [vertex] {};
    \node (v30) at (3,0) [vertex]  {}; 
    \node (v31) at (3,1) [vertex] {};
    \node (v32) at (3,2) [flood] {};
    \node (v40) at (4,0) [flood]  {}; 
    \node (v41) at (4,1) [vertex] {};
    \node (v42) at (4,2) [vertex] {};
    \node (v50) at (5,0) [vertex]  {}; 
    \node (v51) at (5,1) [vertex] {};
    \node (v52) at (5,2) [vertex] {};
    \node (v60) at (6,0) [vertex]  {}; 
    \node (v61) at (6,1) [flood] {};
    \node (v62) at (6,2) [flood] {};
    \node (v70) at (7,0) [vertex]  {}; 
    \node (v71) at (7,1) [flood] {};
    \node (v72) at (7,2) [flood] {};

    \draw [-] (v00) to (v01);
    \draw [-] (v01) to (v02);
    \draw [-] (v10) to (v11);
    \draw [-] (v11) to (v12);
    \draw [-] (v20) to (v21);
    \draw [-] (v21) to (v22);
    \draw [-] (v30) to (v31);
    \draw [-] (v31) to (v32);
    \draw [-] (v40) to (v41);
    \draw [-] (v41) to (v42);
    \draw [-] (v50) to (v51);
    \draw [-] (v51) to (v52);
    \draw [-] (v60) to (v61);
    \draw [-] (v61) to (v62);
    \draw [-] (v70) to (v71);
    \draw [-] (v71) to (v72);

    \draw [-] (v00) to (v10);
    \draw [-] (v10) to (v20);
    \draw [-] (v20) to (v30);
    \draw [-] (v30) to (v40);
    \draw [-] (v40) to (v50);
    \draw [-] (v50) to (v60);
    \draw [-] (v60) to (v70);
    \draw [-] (v01) to (v11);
    \draw [-] (v11) to (v21);
    \draw [-] (v21) to (v31);
    \draw [-] (v31) to (v41);
    \draw [-] (v41) to (v51);
    \draw [-] (v51) to (v61);
    \draw [-] (v61) to (v71);
    \draw [-] (v02) to (v12);
    \draw [-] (v12) to (v22);
    \draw [-] (v22) to (v32);
    \draw [-] (v32) to (v42);
    \draw [-] (v42) to (v52);
    \draw [-] (v52) to (v62);
    \draw [-] (v62) to (v72);

\end{tikzpicture}

\end{center}

\end{example}

\subsection{Cycle} \label{cycle}


A \emph{Cycle Graph} with $n \geq 3 $ vertices, denoted $O_{n}$ 
is the graph with vertex set 
\[V(O_n) = \{v_1, \dots, v_n\}\] 
and edge set 
\[E(O_n) = \{v_1v_{2}, \dots, v_{n-1}v_n, v_nv_1\}.\] 
It follows immediately from construction that $O_n$ is connected and each vertex has degree $2$.

Before providing a recursion for the flood polynomial of $O_n$, we begin with some results about the properties of the flooding cascade sets of $O_n$.

\begin{prop}
    Every neighboring pair of vertices in $O_n$ forms a trigger.
\end{prop}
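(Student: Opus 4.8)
The plan is straightforward: recall the definition of a trigger from Definition \ref{deftrigger}, which requires two vertices that (i) share an edge and (ii) both have degree $2$. I would simply verify both conditions directly for an arbitrary neighboring pair in $O_n$. Let $v_i$ and $v_j$ be vertices of $O_n$ with $v_iv_j \in E(O_n)$; condition (i) is then immediate. For condition (ii), I would invoke the fact already noted immediately after the definition of $O_n$ in the excerpt, namely that every vertex of $O_n$ has degree exactly $2$. Hence both $v_i$ and $v_j$ have degree $2$, so $\{v_i, v_j\}$ satisfies Definition \ref{deftrigger} and is a trigger.

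The only mild subtlety worth a sentence is the edge case $n = 3$: in $O_3 \cong K_3$ every pair of vertices is a neighboring pair, and each still has degree $2$, so the statement holds there too without modification. I would mention this briefly to reassure the reader that no small-case exception arises, but it requires no separate argument.

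There is essentially no obstacle here — the result is a direct unwinding of two definitions plus one previously stated observation. If I wanted to be maximally economical, the whole proof is one or two sentences: "Let $v_iv_j \in E(O_n)$. Since every vertex of $O_n$ has degree $2$, both $v_i$ and $v_j$ have degree $2$, and they share an edge by assumption; therefore $\{v_i, v_j\}$ is a trigger by Definition \ref{deftrigger}." The main ``work,'' such as it is, is just making sure to cite the right prior fact (that $O_n$ is $2$-regular) rather than reproving it.
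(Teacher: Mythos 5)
Your proposal is correct and follows essentially the same route as the paper: the paper's proof likewise takes a neighboring pair, cites the fact that every vertex of $O_n$ has degree $2$, and concludes via Definition \ref{deftrigger}. Your brief remark about $O_3$ is a harmless addition but not needed.
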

\begin{proof}
    Suppose we have a neighboring pair of vertices $\{v_x, v_y$\}. Since each vertex has degree $2$, it follows from Definition \ref{deftrigger} that $v_x$ and $v_y$ form a trigger as desired. 
    \end{proof}

%




\begin{lemma} \label{cycleflood}
    Let $C$ be a cascade set of $G = O_n$.  Then $C \in \FG$ if and only if $V-C$ contains no triggers.
\end{lemma}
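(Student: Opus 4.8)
The plan is to prove both directions by contrapositive-style arguments, leveraging the fact that in $O_n$ every vertex has degree exactly $2$ and every neighboring pair is a trigger (as just established). First I would handle the easy direction: if $V-C$ contains a trigger, then $C \notin \FG$. This is immediate from Corollary \ref{Jackson Cor}, which already states precisely this, so that half of the equivalence needs no new work beyond citing it.

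For the forward (harder) direction, suppose $V-C$ contains no trigger; we must show $C \in \FG$. The key structural observation is that $V-C$ being trigger-free means $V-C$ contains no two adjacent vertices --- equivalently, $C$ is a \emph{dominating-ish} set in the sense that the complement is an independent set in the cycle. So the uncovered vertices are ``isolated'' along the cycle: every vertex $v \in V-C$ has both of its two neighbors in $C$. I would then argue that $C_1 = V$: pick any $v \in V-C$; since both neighbors of $v$ lie in $C \subseteq C_0$, the vertex $v$ has two neighbors in $C_0$, hence $v \in C_1$. As $v$ was arbitrary in $V-C$, we get $C_1 \supseteq (V-C) \cup C = V$, so $C$ floods $G$ in a single step and $C \in \FG$.

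I should be careful about one degenerate point: if $V-C = \varnothing$ then $C = V$ trivially floods, and if $|C|$ is small the complement could still fail to be independent --- but that case is exactly when $V-C$ contains a trigger, which is excluded by hypothesis. The main (very mild) obstacle is just making the equivalence ``$V-C$ trigger-free $\iff$ $V-C$ is an independent set'' airtight: in $O_n$, any two adjacent vertices both have degree $2$, so an edge inside $V-C$ is automatically a trigger; conversely a trigger is in particular a pair of adjacent vertices. Once that equivalence is stated, the proof is essentially the one-step flooding argument above. I would present it as: (i) cite Corollary \ref{Jackson Cor} for one direction; (ii) for the other, observe trigger-free $\Rightarrow$ $V-C$ independent $\Rightarrow$ every $v \in V-C$ has both neighbors in $C$ $\Rightarrow$ $C_1 = V$.
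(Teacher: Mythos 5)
Your proposal is correct and follows essentially the same route as the paper: the reverse direction is cited from Corollary \ref{Jackson Cor}, and the forward direction observes that a trigger-free complement in $O_n$ (where every adjacent pair forms a trigger) forces every vertex of $V-C$ to have both neighbors in $C$, so $C_1 = V$. Your explicit remark that ``trigger-free'' is equivalent to ``$V-C$ independent'' is just a slightly more spelled-out version of the paper's one-step flooding argument.
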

\begin{proof}
Suppose $C$ is a cascade set of $O_n$ and suppose that $V- C$
contains no triggers.  We want to show that $C \in \mathcal{F}(O_n)$.
We will show this by showing that $v \in C_1$ for all $v \in V$.

If $v \in C$, then this case is trivial as $v \in C$ implies $v \in C_1$.

If $v \notin C$, then because $V-C$ contains no triggers, both of $v$'s neighbors are in $C$. Therefore $v \in C_1$ since it has two neighbors in $C$. 


Therefore, if $V-C$ contains no triggers, then $C \in \mathcal{F}(O_n)$.

If $V-C$ contains a trigger, then by Corollary \ref{Jackson Cor}, $C$ will not flood $O_n$.
\end{proof}

We say that any cascade set of $O_n$ that satisfies the conclusions of Lemma \ref{cycleflood} has the \emph{cycle flood property}.  We now give a recursion for the flood polynomial of cycle graphs.





\begin{thm} \label{Cycle polynomial}
The flood polynomial for cycle graphs can be determine recursively with $F_{O_3}(x) = x^3+3x^2$, $F_{O_4}(x) = x^4+4x^3+2x^2$, and for $n \geq 5$, $F_{O_{n}}(x)=x\cdot F_{O_{n-1}}(x)+x\cdot F_{O_{n-2}}(x)$.
\end{thm}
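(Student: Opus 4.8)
The plan is to combine Lemma~\ref{cycleflood} with the preceding proposition that every neighboring pair of vertices of $O_n$ forms a trigger: together these say that a cascade set $C$ lies in $\mathcal{F}(O_n)$ exactly when no two consecutive vertices around the cycle are both outside $C$, i.e.\ exactly when $V-C$ is an independent set of $O_n$. I would \emph{not} attempt to relate $\mathcal{F}(O_n)$ directly to $\mathcal{F}(O_{n-1})$ and $\mathcal{F}(O_{n-2})$ by deleting a vertex, since deleting a vertex of a cycle yields a path, not a shorter cycle. Instead I would reduce everything to the path case via Theorem~\ref{PPP Path} and then let the recursion of Theorem~\ref{path polynomial} produce the cycle recursion.

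Concretely, fix the edge $v_nv_1$ of $O_n$. Since $V-C$ can never contain both $v_n$ and $v_1$, each $C\in\mathcal{F}(O_n)$ falls into exactly one of three classes: (a) $v_1,v_n\in C$; (b) $v_1\notin C$, $v_n\in C$; (c) $v_1\in C$, $v_n\notin C$. For class (a), deleting the edge $v_nv_1$ turns $O_n$ into the path $v_1v_2\cdots v_n$, and since $v_1,v_n\in C$ that edge is irrelevant to whether any pair of consecutive vertices lies outside $C$; hence by Theorem~\ref{PPP Path} class (a) is precisely $\mathcal{F}(P_n)$, contributing $F_{P_n}(x)$. For class (b), $v_1\notin C$ forces its other cycle-neighbor $v_2$ into $C$, so $C\subseteq\{v_2,\dots,v_n\}$; viewing $v_2v_3\cdots v_n$ as $P_{n-1}$, one checks that $C$ contains both endpoints $v_2,v_n$ and that $\{v_2,\dots,v_n\}\setminus C$ is independent in this path, so $C\in\mathcal{F}(P_{n-1})$, and conversely any member of $\mathcal{F}(P_{n-1})$ (which contains $v_2$ and $v_n$ by Theorem~\ref{PPP Path}) extends back into class (b) by leaving $v_1$ out. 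Thus class (b), and symmetrically class (c), each contributes $F_{P_{n-1}}(x)$, giving
\[
F_{O_n}(x)=F_{P_n}(x)+2F_{P_{n-1}}(x)\qquad\text{for all }n\ge 3.
\]

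The base cases follow by substituting the known values of $F_{P_2},F_{P_3},F_{P_4}$ (equivalently the small Fibonacci polynomials), yielding $F_{O_3}(x)=x^3+3x^2$ and $F_{O_4}(x)=x^4+4x^3+2x^2$. For $n\ge 5$ we have $n-1\ge 3$ and $n-2\ge 3$, so Theorem~\ref{path polynomial} applies to both $F_{P_n}$ and $F_{P_{n-1}}$, and
\[
F_{O_n}(x)=F_{P_n}(x)+2F_{P_{n-1}}(x)=x\bigl(F_{P_{n-1}}(x)+2F_{P_{n-2}}(x)\bigr)+x\bigl(F_{P_{n-2}}(x)+2F_{P_{n-3}}(x)\bigr)=xF_{O_{n-1}}(x)+xF_{O_{n-2}}(x),
\]
as desired.

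The argument is largely bookkeeping once the trigger characterization is in place. The step that needs genuine care is the treatment of classes (b) and (c): one must check that the cycle condition really does force the relevant endpoint ($v_2$, resp.\ $v_{n-1}$) into $C$ so that Theorem~\ref{PPP Path} applies, verify that the correspondence with $\mathcal{F}(P_{n-1})$ is a size-preserving bijection in \emph{both} directions so nothing is lost or double-counted, and confirm the three classes are genuinely disjoint with union all of $\mathcal{F}(O_n)$. The only other thing to watch is the smallest cases, where $P_{n-1}$ may be as short as $P_2$; these can be checked directly. I do not anticipate any deeper obstacle.
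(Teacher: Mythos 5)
Your proposal is correct, but it takes a genuinely different route from the paper. The paper works entirely within cycles: after establishing the cycle flood property (Lemma \ref{cycleflood}), it defines a size-reducing map from $\mathcal{F}(O_n)$ onto $\mathcal{F}(O_{n-1})\sqcup\mathcal{F}(O_{n-2})$ by a three-way case analysis on the membership of $v_n$, $v_{n-1}$, $v_1$ (remove $v_n$, or remove $v_{n-1}$ and $v_n$'s contribution, etc.), checking in each case that the shortened set still avoids two consecutive missing vertices and that the map is reversible; the recursion is read off directly from this bijection. You instead split $\mathcal{F}(O_n)$ along the single edge $v_1v_n$ into one copy of $\mathcal{F}(P_n)$ and two copies of $\mathcal{F}(P_{n-1})$, which yields the closed identity $F_{O_n}(x)=F_{P_n}(x)+2F_{P_{n-1}}(x)$ (the flood-polynomial form of $L_n(x)=f_n(x)+2f_{n-1}(x)$), and then derive the stated recursion purely algebraically from Theorem \ref{path polynomial}; your identification of the three classes with path flood sets is sound (the no-trigger condition forces $v_2$, resp.\ $v_{n-1}$, into $C$, the pairs involving $v_1$ or $v_n$ are automatically covered, and the correspondences are the identity on sets, hence size-preserving and reversible), your class decomposition is disjoint and exhaustive since $v_1,v_n$ cannot both be missing, and your index bookkeeping for $n\ge 5$ checks out. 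What the paper's approach buys is a self-contained combinatorial explanation of the recursion coefficients at the level of cycles (mirroring the structure later reused for Lucas polynomials); what yours buys is heavier reuse of the already-proved path machinery (Theorems \ref{PPP Path} and \ref{path polynomial}), arguably less delicate case-checking, and the Fibonacci--Lucas identity $F_{O_n}=F_{P_n}+2F_{P_{n-1}}$ as a combinatorial byproduct, with the base cases $n=3,4$ also following from that identity rather than by enumeration (the decomposition is valid there too, down to $P_2$, as you note).
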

\begin{proof}
The elements of $\mathcal{F}(O_3)$ are $\{v_{1}, v_2\}, \{v_{2}, v_3\}$, $\{v_{1}, v_{3}\}$ and $\{v_{1}, v_2,  v_{3}\}$.  Therefore $F_{O_3}(x) = x^3+3x^2$.
The elements of $\mathcal{F}(O_4)$ are the cascade sets shown in Example \ref{Flood Set}.  Therefore $F_{O_4}(x) = x^4+4x^3+2x^2$.
    
 Now let $n \geq 5.$  We will show that  $F_{O_n}(x) = x\cdot F_{O_{n-1}}(x)+ x \cdot F_{P_{n-2}}(x)$ by showing that each element of $\mathcal{F}(O_n)$ can be expressed as the union of a single element with either an element of $\mathcal{F}(O_{n-1})$ or $\mathcal{F}(O_{n-2})$.  We will create a map between flooding cascade sets of $O_n$ to those of $O_{n-1}$ or $O_{n-2}$. Recall that from Lemma \ref{cycleflood}, if $C \in \mathcal{F}(O_n)$, then it must contain at least one vertex of every neighboring pair in $O_n$. Applying the map will remove an element from C to create a flooding cascade set of either $O_{n-1}$ or $O_{n-2}$. It will be clear which element was removed from $C$ so the map can be reversed. Note that removing an element in $C$ will decrease $|C|$ by $1$, thus accounting for the factor of $x$ in the equation.  We will keep these facts in mind as we proceed with the proof. We begin by considering $v_n$, with three possible cases for the status of $v_n$.

 Case 1: $v_n  \in C$ and $\{v_{n-1}, v_1\} \subseteq V-C$.  We can map this to the flooding cascade set $C'$ of $O_{n-2}$ where $C' = C - \{v_{n-1}, v_n\}$. Note that since $C$ floods $O_n$, $C$ has the cycle flood property.  This means that $C$ contains at least one element of every neighboring pair of vertices of $O_n$.  Since only one element of $C$ is removed to form $C'$ we only need to verify that at least one of $v_{n-2}$ and $v_1$ is in $C'$ in order for $C'$ to have the cycle flood property.
Since $v_{n-1} \notin C$, it follows that $v_{n-2} \in C$ and hence $v_{n-2} \in C'$ as well. Therefore $C'$ has the cycle flood property and $C' \in \mathcal{F}(O_{n-2})$.  This case accounts for all elements of $\mathcal{F}(O_{n-2})$ that do not contain $v_1$.

 Case 2: $v_n \in C$ and $\{v_{n-1}, v_1\} \nsubseteq V-C$. Then we will map $C$ to the flooding cascade set $C'$ of $O_{n-1}$ where $C' = C -\{v_n\}$. As with the first case, $C'$ has only one fewer element than $C$.  Note again that by Lemma \ref{cycleflood}, $C$ contains at least one element of every neighboring pair of vertices of $O_n$.  Since only one element of $C$ is removed to form $C'$ we only need to verify that at least one of $v_{n-1}$ and $v_1$ is in $C'$ in order for $C'$ to have the cycle flood property.  We are assuming that $\{v_{n-1}, v_1\} \nsubseteq V-C$ so $\{v_{n-1}, v_1\} \nsubseteq V-C'$.  Therefore $C'$ has the cycle flood property and $C' \in \mathcal{F}(O_{n-1})$.  This case accounts for all elements of $\mathcal{F}(O_{n-1})$.

 Case 3: $v_n \notin C$. From Lemma \ref{cycleflood}, we have $\{v_{n-1}, v_1\}\subseteq C$. Then we will map $C$ to the flooding cascade set $C'$ of $O_{n-2}$ where $C' = C -\{v_{n-1}\}$.  As with the first two cases, $C'$ has only one fewer element than $C$.  Note again that by Lemma \ref{cycleflood}, $C$ contains at least one element of every neighboring pair of vertices of $O_n$.  Since only one element of $C$ is removed to form $C'$ we only need to verify that at least one of $v_{n-2}$ and $v_1$ is in $C'$ in order for $C'$ to have the cycle flood property.  We are assuming that $v_1 \in C$ so $v_1 \in C'$.  Therefore $C'$ has the cycle flood property and $C' \in \mathcal{F}(O_{n-2})$.  This case accounts for all elements of $\mathcal{F}(O_{n-2})$ that contain $v_1$.
 
Therefore, every flooding cascade set $C \in O_n$ can be mapped to one of either $O_{n-1}$ or $O_{n-2}$ by removing one element from $C$. This gives us the recursion relationship $F_{O_{n}}(x)=x\cdot F_{O_{n-1}}(x)+x\cdot F_{O_{n-2}}(x)$ as desired. 
 \end{proof}


The \emph{Lucas numbers} are a sequence of number defined recursively by $L_0 = 2$, $L_1 = 1$, and for $n \geq 2$, $L_n = L_{n-1}+L_{n-2}$; the first few Lucas numbers are $2, 1, 3, 4, 7, 11, \dots$.  The Lucas numbers count numerous combinatorial objects \cite{OEIS_A000032}, but in particular, for $n\geq 3$, $L_n$ is the number of independent vertex sets (a set of vertices in a graph where no two of which are adjacent) for the cycle graph $O_n$ \cite{Prodinger_Tichy}.  Note that Lemma \ref{cycleflood} can be rewritten as $C \in \mathcal{F}(O_n)$ if and only if $V(O_n)-C$ is an independent vertex set.  Therefore  $\mathcal{F}(O_n)$ is in bijection with the set of independent vertex sets of $O_n$, leading to the following result enumerating the flood set of $O_n$.

 \begin{cor}
     For $n\geq 3$, $|\mathcal{F}(O_n)| = F_{O_n}(1) = L_n$. 
 \end{cor}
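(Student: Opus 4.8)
The plan is to obtain this corollary by specializing the recursion of Theorem \ref{Cycle polynomial} at $x = 1$. First, by Proposition \ref{size of graph} (or directly from Definition \ref{Def Flood Polynomial}), $F_{O_n}(1) = |\mathcal{F}(O_n)|$, so the first equality is immediate and only the identity $F_{O_n}(1) = L_n$ requires work. Setting $x = 1$ in Theorem \ref{Cycle polynomial} gives $F_{O_3}(1) = 1 + 3 = 4$, $F_{O_4}(1) = 1 + 4 + 2 = 7$, and, for $n \geq 5$, the additive recurrence $F_{O_n}(1) = F_{O_{n-1}}(1) + F_{O_{n-2}}(1)$.

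Next I would compare this with the Lucas numbers. Since $L_0 = 2$ and $L_1 = 1$, the recurrence $L_n = L_{n-1} + L_{n-2}$ yields $L_2 = 3$, $L_3 = 4$, and $L_4 = 7$, so the two base cases match: $F_{O_3}(1) = L_3$ and $F_{O_4}(1) = L_4$. A routine induction on $n$ then finishes the argument: assuming $F_{O_{n-1}}(1) = L_{n-1}$ and $F_{O_{n-2}}(1) = L_{n-2}$ for some $n \geq 5$, the two recurrences coincide, so $F_{O_n}(1) = L_{n-1} + L_{n-2} = L_n$.

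As an alternative (and arguably more conceptual) route, one can give a bijective proof using Lemma \ref{cycleflood}: a cascade set $C$ lies in $\mathcal{F}(O_n)$ if and only if $V(O_n) - C$ contains no trigger, and since every edge of $O_n$ is a trigger, this says exactly that $V(O_n) - C$ is an independent vertex set. The complementation map $C \mapsto V(O_n) - C$ is then a bijection between $\mathcal{F}(O_n)$ and the family of independent vertex sets of $O_n$, which is classically known to have cardinality $L_n$ for $n \geq 3$ (see \cite{Prodinger_Tichy}). Either way, there is no real obstacle here; the only point demanding care is the indexing convention for the Lucas numbers, i.e. verifying that $L_3 = 4$ and $L_4 = 7$ so that the base cases of the flood-polynomial recursion (valid for $n \geq 5$) align with the Lucas recurrence.
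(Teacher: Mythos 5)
Your proposal is correct. Your primary argument (set $x=1$ in Theorem \ref{Cycle polynomial}, check $F_{O_3}(1)=4=L_3$ and $F_{O_4}(1)=7=L_4$, and induct using the common recurrence) is a valid route, and it is essentially the specialization at $x=1$ of the identity $F_{O_n}(x)=L_n(x)$ that the paper records right after the corollary; the indexing check you flag is exactly the point where care is needed, and you handle it correctly. The paper itself, however, derives the corollary the other way: it invokes Lemma \ref{cycleflood} to observe that $C\in\mathcal{F}(O_n)$ precisely when $V(O_n)-C$ is an independent vertex set, so complementation is a bijection between $\mathcal{F}(O_n)$ and the independent vertex sets of $O_n$, whose number is $L_n$ by \cite{Prodinger_Tichy} --- that is, your ``alternative'' bijective argument is the paper's actual proof. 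The trade-off is as you suggest: the recursion route is self-contained, needing nothing beyond the flood-polynomial recursion and the Lucas recurrence, while the complementation route is more conceptual, explains structurally why Lucas numbers appear, and ties the flood set of $O_n$ to the classical enumeration of independent sets in cycles.
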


There are many ways to define the \emph{Lucas polynomials}, but one such definition is they are the polynomials defined recursively as $L_0(x) = 2$, $L_1(x) = x$, and for $n \geq 2$, $L_n(x) = x\cdot L_{n-1}(x)+  x\cdot L_{n-2}(x)$. It follows immediately from Theorem \ref{Cycle polynomial} that for $n \geq 3$, $F_{O_n}(x) = L_n(x)$.

We now give a combinatorial interpretation of the coefficients of $L_n(x)$ in terms of flooding cascade sets of $O_n$.


\begin{cor} \label{fibonacci polynomial}
    If $n\geq 3$ and $L_n(x) = \sum_{k=0}^nL(n, k)x^k$, then $L(n, k)$ is the number $k$-element flooding cascade sets of $O_n$. 
\end{cor}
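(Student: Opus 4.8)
The plan is to obtain this directly from the polynomial identity $F_{O_n}(x) = L_n(x)$ recorded just after Theorem \ref{Cycle polynomial}, combined with the definition of the flood polynomial. Recall from the discussion following Definition \ref{Def Flood Polynomial} that we may write $F_{O_n}(x) = \sum_{k=0}^n c_k x^k$, where $c_k$ is exactly the number of $k$-element flooding cascade sets of $O_n$. Thus it suffices to prove that $c_k = L(n,k)$ for every $k$, which follows once we know that $F_{O_n}(x)$ and $L_n(x)$ are the same polynomial.

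To that end I would first confirm $F_{O_n}(x) = L_n(x)$ for all $n \geq 3$ by a short induction. Both $(F_{O_n}(x))_{n\geq 3}$ and $(L_n(x))_{n\geq 3}$ satisfy the recurrence $p_n(x) = x\,p_{n-1}(x) + x\,p_{n-2}(x)$ for $n \geq 5$: for the flood polynomials this is Theorem \ref{Cycle polynomial}, and for the Lucas polynomials it is their defining recurrence. It then remains to check agreement on two consecutive indices. Using $L_0(x) = 2$ and $L_1(x) = x$ in the Lucas recurrence gives $L_2(x) = x^2 + 2x$, then $L_3(x) = x L_2(x) + x L_1(x) = x^3 + 3x^2$ and $L_4(x) = x L_3(x) + x L_2(x) = x^4 + 4x^3 + 2x^2$, which are precisely the values $F_{O_3}(x)$ and $F_{O_4}(x)$ computed in Theorem \ref{Cycle polynomial}. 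An immediate induction on $n$ then yields $F_{O_n}(x) = L_n(x)$ for all $n \geq 3$.

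Finally I would equate coefficients: writing $L_n(x) = \sum_{k=0}^n L(n,k)x^k$ and $F_{O_n}(x) = \sum_{k=0}^n c_k x^k$, the identity $F_{O_n}(x) = L_n(x)$ forces $c_k = L(n,k)$ for each $k$, and since $c_k$ is the number of $k$-element flooding cascade sets of $O_n$, this is the claim. There is essentially no obstacle; the only point requiring care is the bookkeeping in the base cases, i.e.\ ensuring that $L_3(x)$ and $L_4(x)$ are generated from the initial data $L_0(x), L_1(x)$ exactly as the recurrence in Theorem \ref{Cycle polynomial} is set up, so that the two inductions line up. Alternatively, one could argue bijectively via Lemma \ref{cycleflood}: since $C \in \mathcal{F}(O_n)$ if and only if $V(O_n) - C$ is an independent vertex set, $c_k$ equals the number of independent vertex sets of size $n-k$ in $O_n$, and one could then invoke the classical description of the coefficients of the Lucas polynomials in terms of independent sets of cycles; but the recurrence comparison above is cleaner and self-contained.
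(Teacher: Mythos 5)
Your proposal is correct and takes essentially the same route as the paper: the paper also obtains $F_{O_n}(x) = L_n(x)$ for $n \geq 3$ by observing that the recurrence of Theorem \ref{Cycle polynomial} is the Lucas recurrence with matching initial values, and then the corollary is just the coefficient-by-coefficient reading of this identity via the definition of the flood polynomial. Your explicit check that $L_3(x)$ and $L_4(x)$ agree with $F_{O_3}(x)$ and $F_{O_4}(x)$ simply makes precise what the paper states as ``follows immediately.''
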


\subsection{Triangle Mosaic} \label{triangle graph}

A \emph{triangle mosaic graph of size $n$} is a graph with $n$  vertices, 
denoted $T_n$. The edge set of $T_n$ is the graph with vertex set 
\[V(T_n) = \{v_1, \dots, v_n\}\] and edge set \[E(T_n) = \{v_iv_j \mid |i-j| \leq 2\}.\]
The following is the triangle mosaic graph of size $6$, $T_6$.
\begin{center}
\begin{tikzpicture}
    \node (v1) at (-4,0) [vertex, label=above: $v_2$]  {}; 
    \node (v2) at (-4.5,-1) [vertex, label=below: $v_1$] {};
    \node (v3) at (-3,0) [vertex, label=above: $v_4$] {};
    \node (v4) at (-3.5,-1) [vertex, label=below: $v_3$] {};
    \node (v5) at (-2,0) [vertex, label=above: $v_6$] {};
    \node (v6) at (-2.5,-1) [vertex, label=below: $v_5$] {};

    \draw [-] (v1) to (v3);
    \draw [-] (v1) to (v2); 
    \draw [-] (v2) to (v4);
    \draw [-] (v3) to (v5);
    \draw [-] (v3) to (v4);
    \draw [-] (v4) to (v6);
    \draw [-] (v5) to (v6);
    \draw [-] (v1) to (v4);
    \draw [-] (v3) to (v6);

\end{tikzpicture}
\end{center}

We saw in Example \ref{diameter} that all triangle mosaic graphs can be flooded with a two-element cascade set.


\begin{lemma} \label{triangle mosaic flood condition}
Let $C$ be a cascade set of $G = T_n$.  If $\{v_k, v_{k+1}\} \subseteq C$ for some $1 \leq k < n$, then $C \in \FG$.
\end{lemma}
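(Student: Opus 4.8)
The plan is to show that if $\{v_k, v_{k+1}\} \subseteq C$, then the cascade sequence propagates outward in both directions until all of $T_n$ is flooded, mimicking the argument sketched in Example \ref{diameter}. The key structural observation about $T_n$ is that the neighbors of any vertex $v_i$ include $v_{i-2}, v_{i-1}, v_{i+1}, v_{i+2}$ (whichever of these exist), so in particular any two consecutive vertices $v_j, v_{j+1}$ are both neighbors of $v_{j+2}$ and both neighbors of $v_{j-1}$. I would first record this adjacency fact explicitly, since it is the engine of the whole argument.

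First I would prove by induction that for each $t \geq 0$, the set $\{v_{k-t}, v_{k+1-t}, \ldots, v_{k+t}, v_{k+1+t}\} \cap V(T_n)$ is contained in $C_t$. The base case $t = 0$ is the hypothesis $\{v_k, v_{k+1}\} \subseteq C = C_0$. For the inductive step, assume the claim for $t$; I want to show $v_{k+1+t+1} = v_{k+2+t}$ and $v_{k-t-1}$ flood in step $t+1$ (when they exist). By the inductive hypothesis, $v_{k+t}$ and $v_{k+1+t}$ both lie in $C_t$, and both are neighbors of $v_{k+2+t}$ (since their indices differ from $k+2+t$ by $2$ and $1$ respectively), so $v_{k+2+t} \in C_{t+1}$. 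The symmetric argument on the left end shows $v_{k-t-1} \in C_{t+1}$, using that $v_{k-t}$ and $v_{k+1-t}$... wait — more carefully, $v_{k-t-1}$ has $v_{k-t}$ and $v_{k-t+1}$ as neighbors (indices differing by $1$ and $2$), and both are in $C_t$, so $v_{k-t-1} \in C_{t+1}$. Every vertex already in $C_t$ remains, so the full claimed set lies in $C_{t+1}$.

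Then I would conclude: taking $t$ large enough (say $t = n$), the set $\{v_{k-t}, \ldots, v_{k+1+t}\} \cap V(T_n)$ equals all of $V(T_n)$, since the index range $[k-n, k+1+n]$ certainly contains $[1,n]$. Hence $C_t = V(T_n)$, so $\overline{C} = V(T_n)$ and $C \in \mathcal{F}(G)$. The main obstacle — though it is minor — is bookkeeping the boundary cases: when $k = 1$ there is no vertex to the left, and when $k+1 = n$ there is none to the right, so one must be careful that the induction still flood the interior. This is handled by the qualifier ``$\cap V(T_n)$'' throughout and the observation that a vertex near the boundary still has at least two neighbors in the appropriate earlier cascade set (e.g. $v_n$ has neighbors $v_{n-1}, v_{n-2}$, both of which get flooded by the time the wave reaches them). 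I do not expect the argument to require anything beyond this induction.
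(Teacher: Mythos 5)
Your proof is correct, but it takes a different route from the paper's. The paper proves the lemma by strong induction on $n$: for $1<k<n-1$ it splits $T_n$ into the induced sub-mosaics on $\{v_1,\dots,v_{k+1}\}\cong T_{k+1}$ and $\{v_k,\dots,v_n\}\cong T_{n-k+1}$, each of which contains the pair $\{v_k,v_{k+1}\}$ and is flooded by the inductive hypothesis (with separate handling of $k=1$ and $k=n-1$, where one first floods $v_3$ or $v_{n-2}$ and passes to $T_{n-1}$); this implicitly uses the fact that a set flooding an induced subgraph also floods those vertices inside the larger graph, since every edge of the subgraph is an edge of $T_n$. You instead induct on the cascade step $t$, showing the flooded interval $\{v_{k-t},\dots,v_{k+1+t}\}\cap V(T_n)$ grows by one index on each side at every step, which is essentially a rigorous version of the wave picture in Example \ref{diameter}. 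Your inductive step checks out: whenever $v_{k+2+t}$ exists, its neighbors $v_{k+t}$ and $v_{k+1+t}$ exist and lie in $C_t$, and symmetrically for $v_{k-t-1}$, and the ``$\cap V(T_n)$'' qualifier disposes of the boundary cases; taking $t=n$ gives $C_n=V(T_n)$. What your approach buys is a more self-contained, elementary argument that never needs the subgraph-monotonicity observation or the isomorphism bookkeeping; what the paper's approach buys is consistency with the divide-and-conquer inductions used elsewhere in Section 4 (for instance in Lemma \ref{2xn flooding conditions}), where the same splitting template is reused.
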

\begin{proof}
We will prove this by inducting on $n$. If $n=2$, then $\{v_1, v_2\} = V$ so $\{v_1, v_2\} \in \FG$.
Now suppose that $n>2$ and the result holds for all $T_m$ where $2\leq m < n$.  Let $C$ be a cascade set satisfying $\{v_k, v_{k+1}\} \subseteq C$ for some $1 \leq k < n$.  

If $k = n-1$, then $v_{n-2} \in C_1$ since $v_{n-2}$ is neighbors with both $v_{n-1}$ and $v_n$.  The subgraph $T_n - \{v_n\} \cong T_{n-1}$ and by induction $C_1 - \{v_n\}$ floods this subgraph.  Therefore $C \in \FG$.

A similar argument can be made if $k=1$.

If $1 < k < n-1$, then consider the subgraph $T_n$ consisting of vertices $\{v_1, \dots, v_{k+1}\}$. This subgraph is isomorphic to $T_{k+1}$.  Since $k < n-1$, by induction $C \cap \{v_1, \dots, v_{k+1}\}$ floods this graph.  Similarly, consider the subgraph $T_n$ consisting of vertices $\{v_k, \dots, v_{n}\}$. This subgraph is isomorphic to $T_{n-k+1}$.  Since $k>1$, by induction $C \cap \{v_k, \dots, v_{n}\}$ floods this graph.  Therefore $C \in \FG$ as desired.
\end{proof}


\begin{thm} \label{distance of 4 condition}
Let $C$ be a cascade set of $G = T_n$.  Then $C \in \FG$ if and only if there exists $1 \leq i < j \leq n$ such that $\{v_i, v_j\} \subseteq C$ and $|i-j| \leq 4$.
\end{thm}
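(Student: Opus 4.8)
The plan is to prove the two implications separately. The backward direction (``only if'') is a short contradiction argument about the width of a neighborhood in $T_n$, while the forward direction (``if'') is a small induction that feeds into Lemma \ref{triangle mosaic flood condition}. Throughout I take $n \ge 2$, since the statement is degenerate when $n = 1$ (then $T_n$ has no pair of vertices to speak of).

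For the ``if'' direction, suppose $\{v_i, v_j\} \subseteq C$ with $i < j$, and set $d = j - i \le 4$. I would induct on $d$, proving the statement ``every cascade set of $T_n$ that contains a pair of vertices whose indices differ by $d$ belongs to $\mathcal{F}(T_n)$''. The base case $d = 1$ is exactly Lemma \ref{triangle mosaic flood condition}. For $2 \le d \le 4$, I would choose an index $m$ with $i < m < j$ and $m - i \le 2$, $j - m \le 2$ — concretely $m = i+1$ when $d \in \{2,3\}$ and $m = i+2$ when $d = 4$. Then $v_m$ is a neighbor in $T_n$ of both $v_i$ and $v_j$, so $v_m \in C_1$. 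Now $C_1$ is a cascade set containing the pair $\{v_i, v_m\}$ with $1 \le m - i < d$, so the inductive hypothesis gives $C_1 \in \mathcal{F}(T_n)$; since the cascade sequence of $C_1$ converges to the same set as that of $C$, we conclude $\overline{C} = V$, i.e. $C \in \mathcal{F}(T_n)$.

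For the ``only if'' direction, suppose $C \in \mathcal{F}(T_n)$ and, for contradiction, that every two distinct vertices of $C$ have indices differing by at least $5$. Since a one-element cascade set floods only the single-vertex graph, $|C| \ge 2$, and hence $C \neq V$ because $v_1, v_2$ lie at index-distance $1$. I claim $C_1 = C$: for any $v_k \notin C$, all neighbors of $v_k$ in $T_n$ lie in $\{v_{k-2}, v_{k-1}, v_{k+1}, v_{k+2}\}$, whose indices pairwise differ by at most $4$, so at most one neighbor of $v_k$ belongs to $C$; thus $v_k \notin C_1$. Therefore $\overline{C} = C_1 = C \neq V$, contradicting $C \in \mathcal{F}(T_n)$.

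The argument is not hard once Lemma \ref{triangle mosaic flood condition} is available; the only points requiring care are the bookkeeping in the inductive step — in particular the $d = 4$ case, which needs one extra cascade step before a pair of consecutive vertices appears — and the handling of the degenerate small-$n$ cases. I do not anticipate a serious obstacle.
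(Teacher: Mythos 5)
Your proof is correct and follows essentially the same route as the paper's: the forward direction reduces to Lemma \ref{triangle mosaic flood condition} by flooding an intermediate vertex adjacent to both $v_i$ and $v_j$ (the paper does this in two explicit cascade steps, producing $v_{i+2}$ and then $v_{i+1}$, while you package it as a short induction on the gap $j-i$), and the converse direction is the same observation that when all vertices of $C$ are at index distance greater than $4$, no vertex outside $C$ has two neighbors in $C$, so $C_1 = C \neq V$. Your explicit handling of the boundary and degenerate small-$n$ cases is slightly more careful than the paper's, but the argument is the same.
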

\begin{proof}
    Suppose that there exists $1 \leq i < j \leq n$ such that $\{v_i, v_j\} \subseteq C$ and $|i-j| \leq 4$.  If $i = n-1$, then $j=n$ and $C \in \FG$ by Lemma \ref{triangle mosaic flood condition}.
    
    If $i < n-1$, then recall that the neighbors of $v_{i+2}$ are $\{v_i, v_{i+1}, v_{i+3}, v_{i+4}\}$. It follows that $v_{i+2} \in C_1$.  That is because it is either the case that $j = i+2$ or $j \in \{i+1, i+3, i+4\}$.  Since $v_{i+2} \in C_1$, we have that $v_{i+1} \in C_2$.  By Lemma \ref{triangle mosaic flood condition} we have that $C \in \FG$ as desired.
    
    Now suppose that for all $\{v_i, v_j\} \subseteq  C$, it follows that $|i-j| > 4$.  This means that there is no element of $V-C$ that is neighbors with at least two elements of $C$.  Therefore $C_1 = C$ and $C \notin \FG$.
\end{proof}

We can now give a formula for the flood polynomial of $T_n$, but for convenience, let $\textrm{COMP}(n, 4)$ be defined as follows: 
\[\textrm{COMP}(n, 4) = \{\alpha \vDash (n+1) \mid  \alpha_k \leq 4 \text{ for some } 1<k<\ell(\alpha) \}.\]
Note that the proof of the following theorem will rely on the notation introduced in Section \ref{comps and partitions}.

\begin{thm}
    The flood polynomial for $T_n$ is given by 
    \[F_{T_n}(x) = \sum_{\alpha \in \textrm{COMP}(n, 4)} x^{\ell(\alpha)-1}.\]
\end{thm}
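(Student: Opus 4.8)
The plan is to prove the formula by a bijection, matching the flooding cascade sets of $T_n$ with the compositions appearing in $\textrm{COMP}(n,4)$ via the standard subset--composition correspondence of Section~\ref{comps and partitions}. The starting point is to restate Theorem~\ref{distance of 4 condition} in terms of consecutive indices: writing a cascade set as $C = \{v_{a_1}, \dots, v_{a_k}\}$ with $a_1 < a_2 < \cdots < a_k$, if two elements of $C$ have indices differing by at most $4$ then the closest such pair is consecutive in the list $a_1 < \cdots < a_k$, and a consecutive pair with gap at most $4$ is of course such a pair. Hence, by Theorem~\ref{distance of 4 condition}, $C \in \mathcal{F}(T_n)$ if and only if $k \geq 2$ and $a_{t+1} - a_t \leq 4$ for some $1 \leq t \leq k-1$.

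Next I would introduce the bijection $\Phi$ that sends the subset $C = \{a_1 < \cdots < a_k\}$ of $[n]$ to the composition
\[\Phi(C) = (a_1,\ a_2 - a_1,\ \dots,\ a_k - a_{k-1},\ n+1-a_k)\]
of $n+1$; this is exactly the correspondence of Section~\ref{comps and partitions} with $n+1$ in place of $n$, so $\Phi$ is a bijection from the subsets of $V(T_n)$ onto the set of compositions of $n+1$. Two observations make the argument go through. First, $\Phi(C)$ has $k+1$ parts, so $\ell(\Phi(C)) - 1 = k = |C|$, meaning $\Phi$ carries $x^{|C|}$ to $x^{\ell(\alpha)-1}$. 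Second, the parts $\alpha_t$ of $\alpha = \Phi(C)$ indexed by $1 < t < \ell(\alpha)$ are precisely the consecutive gaps $a_2 - a_1, \dots, a_k - a_{k-1}$. Combining these with the reformulation above, $C \in \mathcal{F}(T_n)$ if and only if $\Phi(C)$ has a part $\alpha_t \leq 4$ with $1 < t < \ell(\Phi(C))$, i.e., if and only if $\Phi(C) \in \textrm{COMP}(n,4)$. Transporting the sum $F_{T_n}(x) = \sum_{C \in \mathcal{F}(T_n)} x^{|C|}$ along $\Phi$ then gives $\sum_{\alpha \in \textrm{COMP}(n,4)} x^{\ell(\alpha)-1}$, as claimed.

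I do not expect a genuine obstacle: the content is entirely in the two observations of the previous paragraph, and the main thing to be careful about is the indexing --- confirming that $\Phi$ matches the parts strictly between the first and the last with the gaps between consecutive elements of $C$, and that the degenerate cases $|C| \leq 1$ behave correctly (there $C$ is non-flooding, and $\Phi(C)$ has at most two parts, hence no part with index strictly between $1$ and $\ell(\Phi(C))$, so it is properly excluded from $\textrm{COMP}(n,4)$).
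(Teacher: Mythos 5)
Your proposal is correct and follows essentially the same route as the paper's proof: both invoke Theorem \ref{distance of 4 condition} and then apply the subset--composition correspondence of Section \ref{comps and partitions} (with $n+1$ in place of $n$), observing that the internal parts of the composition are exactly the consecutive gaps of $C$ and that $\ell(\alpha)-1 = |C|$. Your write-up simply makes explicit the consecutive-gap reformulation and the degenerate cases that the paper leaves implicit.
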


\begin{proof}
    We saw in Theorem \ref{distance of 4 condition} that $C \in \mathcal{F}(T_n)$ if and only if there exists $1 \leq i < j \leq n$ such that $\{v_i, v_j\} \subseteq  C$ and $|i-j| \leq 4$.  Let $S(C)$ be the set of indices of the elements of $C$.  For example, if $C = \{v_1, v_4\}$, then $S(C) = \{1, 4\}$.  It follows from the definitions of $S(C)$ and $\textrm{COMP}(n, 4)$ that $S(C) \subseteq [n]$ and $C \in \mathcal{F}(T_n)$ if and only if $\co({S(C)}) \in \textrm{COMP}(n, 4)$.  To conclude the proof, note that $\ell(\co({S(C)})) -1  = |C|$.
\end{proof}


\section{Graphs with the Same Flood Polynomial} \label{Graphs with the Same Flood Polynomial}

In this section we provide general examples of pairs of distinct graphs with the same flood polynomial.  Due to the large number of graphs and limits to the sizes of the coefficients (see Proposition \ref{coefficients}), it is common for a graph to share a flood polynomial with another graph.  The smallest example is shown in Example \ref{2 element graphs}.

Before we get to the main results of this section, we need a lemma that will be used in Section \ref{centipede} and Section \ref{tick}.  

\begin{lemma} \label{Shaboingboing}
    Suppose $G$ is a graph that contains a vertex $v$ with the following property: $\deg(v)=2m+2$ and exactly $m+2$ of $v$'s neighbors are leaves. Let $\{l_1, \dots l_{m+2}\}$ be the set of leaves that are neighbors with $v$ and let $\{g_1, \dots, g_m\}$ be the set of neighbors of $v$ that are not leaves.  Then $\F = F_{P_3}(x)\cdot F_{G'}(x)$ where $G'$ is formed by removing $v$, $l_{m+1}$, and $l_{m+2}$ and all edges incident to $v$ from $G$ and then adding edges between $l_k$ and $g_k$ for $1 \leq k \leq m$.
\end{lemma}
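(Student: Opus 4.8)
The plan is to exhibit a two-to-one, weight-tracking correspondence between $\FG$ and $\mathcal{F}(G')$ and then read off the polynomial identity. Write $\mathcal{L}=\{l_1,\dots,l_{m+2}\}$ for the leaf-neighbors of $v$ and set $W = V(G)\setminus(\{v\}\cup\mathcal{L})$, so that the non-leaf neighbors $g_1,\dots,g_m$ all lie in $W$, the edges of $G$ inside $W$ are exactly the edges of $G'$ inside $W$, and $V(G') = W \sqcup \{l_1,\dots,l_m\}$. Two preliminary observations: first, by Corollary \ref{leaves in cascade set} every $C\in\FG$ contains $\mathcal{L}$, and since $m+2\ge 2$ this forces $v\in C_1$, so $v$ is flooded by $C$ at the first step whether or not $v\in C$; second, in $G'$ each $l_k$ ($1\le k\le m$) is a leaf (its unique neighbor being $g_k$), while $g_k$ is not a leaf of $G'$ since $\deg_{G'}(g_k)=\deg_G(g_k)\ge 2$, so every flooding cascade set of $G'$ contains $\{l_1,\dots,l_m\}$.

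The heart of the proof is the claim: if $C$ is a cascade set of $G$ with $\mathcal{L}\subseteq C$, then $C\in\FG$ if and only if $C' := (C\cap W)\cup\{l_1,\dots,l_m\}$ lies in $\mathcal{F}(G')$. I would prove this by comparing the two cascade dynamics restricted to $W$. The point is that, once the auxiliary vertices are present, both graphs use the \emph{same} one-step flood rule on subsets of $W$: in $G$, from step $1$ onward $v$ is flooded and contributes exactly one flooded neighbor to each $g_k$; in $G'$, the leaf $l_k$ lies in $C'$ from the start and likewise contributes exactly one flooded neighbor to $g_k$; and no vertex of $W$ other than the $g_k$ is adjacent to $v$ or to any leaf of $\mathcal{L}$ (resp.\ to any $l_k$ in $G'$). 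Hence a vertex $g_k$ floods exactly when it has at least one flooded neighbor inside $W$, and every other vertex of $W$ floods exactly when it has at least two flooded neighbors inside $W$ — the identical rule $\Phi$ in both graphs. The only delicate point is the very first step in $G$, where $v$ may not yet be flooded; here one checks $C\cap W\subseteq C_1\cap W\subseteq \Phi(C\cap W)$, and then a short monotonicity induction sandwiches $\Phi^{t}(C\cap W)\subseteq C_{1+t}\cap W\subseteq \Phi^{t+1}(C\cap W)$, so that $\C\cap W$ in $G$ and $\overline{C'}\cap W$ in $G'$ both equal $\Phi^{\infty}(C\cap W)$. Since $v$ and $\mathcal{L}$ are always flooded by $C$ in $G$, and $\{l_1,\dots,l_m\}\subseteq C'$ in $G'$, we conclude $\C = V(G) \iff \C\cap W = W \iff \overline{C'}\cap W = W \iff \overline{C'} = V(G')$, which is the claim.

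Finally I would assemble the identity. By the claim, each $C'\in\mathcal{F}(G')$ has exactly two preimages in $\FG$, namely $(C'\cap W)\cup\mathcal{L}$ and $(C'\cap W)\cup\mathcal{L}\cup\{v\}$; since the $W$-part is unchanged, the $m$ leaves $l_1,\dots,l_m$ of $G'$ are replaced by the $m+2$ leaves of $\mathcal{L}$, and $v$ is optional, these preimages have sizes $|C'|+2$ and $|C'|+3$. Therefore
\[
\F = \sum_{C\in\FG} x^{|C|} = \sum_{C'\in\mathcal{F}(G')}\bigl(x^{|C'|+2}+x^{|C'|+3}\bigr) = (x^3+x^2)\sum_{C'\in\mathcal{F}(G')} x^{|C'|} = F_{P_3}(x)\cdot F_{G'}(x),
\]
using $F_{P_3}(x)=x^3+x^2$ from Theorem \ref{path polynomial}. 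The main obstacle is the claim — in particular, pinning down carefully that the flood dynamics on $W$ coincide in $G$ and $G'$, including the boundary behavior at the first cascade step; the rest is bookkeeping. I would also remark that the degenerate case $m=0$ is consistent, since then $G = P_3\oplus G'$ and the statement reduces to Proposition \ref{disconnected}.
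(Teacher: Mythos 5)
Your proof is correct, and it takes a noticeably different route from the paper's. The paper exploits the fact that $G$ and $G'\oplus P_3$ have the \emph{same} vertex set: it proves $\FG=\mathcal{F}(G'\oplus P_3)$ outright, by a largest-counterexample argument that compares only the first cascade step in the two graphs (using Proposition \ref{flooding superset} to finish), and then factors the polynomial via Proposition \ref{disconnected}. You instead work with $G'$ itself (three fewer vertices), building an explicit $2$-to-$1$, size-shifting correspondence $C\mapsto (C\cap W)\cup\{l_1,\dots,l_m\}$ from $\FG$ onto $\mathcal{F}(G')$, and you justify the flood-equivalence by showing both cascade processes, restricted to $W$, are governed by the same monotone update rule $\Phi$ (``$g_k$ needs one flooded $W$-neighbor, everyone else needs two''), with the sandwich $\Phi^{t}(C\cap W)\subseteq C_{1+t}\cap W\subseteq\Phi^{t+1}(C\cap W)$ absorbing the one-step lag caused by $v$ possibly not being flooded at time $0$; the factor $F_{P_3}(x)=x^2+x^3$ then comes out of direct bookkeeping ($\mathcal{L}$ forced, $v$ optional) rather than from the disjoint-union proposition. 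What the paper's approach buys is brevity: identifying the vertex sets means only single cascade steps need comparing and the product structure is quoted, not re-derived. What your approach buys is a self-contained, constructive bijection that makes the weight shift $x^{|C'|+2}+x^{|C'|+3}$ visible, isolates exactly where the dynamics of $G$ and $G'$ could differ (the first step), and handles it cleanly by monotonicity; your closing remark that the case $m=0$ degenerates to Proposition \ref{disconnected} is a nice sanity check. The key facts you rely on (every flooding cascade set contains all leaves; $l_k$ is a leaf of $G'$ with unique neighbor $g_k$; no vertex of $W$ other than the $g_k$ touches $v$ or the leaves) are all verified or immediate, so I see no gap.
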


The example below illustrates the formation of $G'$ as described in Lemma \ref{Shaboingboing}.

\begin{example}
    Let $G$ be a graph with the property described in Lemma \ref{Shaboingboing}.  We highlighted in orange the three vertices that are removed from $G$ to create $G'$.  You can see how the new edges are formed in $G'$ when the vertices are removed.

\begin{center}
\begin{tikzpicture} [auto, vertex/.style={circle,draw=black!100,thin,inner sep=0pt,minimum size=1.5mm}, flood/.style={circle,draw=black!100,fill=cyan!100, thin,inner sep=0pt,minimum size=1.5mm}]
    \node[ellipse, thick,
    draw = black,
    minimum width = 4cm, 
    minimum height = 2cm] (e) at (0,0) {};

    \node at (-2, 1.2) {$G:$};

    \node (v) at (0,1.2) [vertex, fill = orange, label=left: $v$]  {}; 

    \node (l1) at (-1,2) [vertex]  {}; 
    \node (l2) at (-.5,2) [vertex]  {}; 
    \node at (0, 2) {$\dots$};
    \node (l3) at (.4,2) [vertex]  {};
    \node (l4) at (.9,2) [vertex, fill = orange]  {};
    \node (l5) at (1.4,2) [vertex, fill = orange]  {};

    \node (g1) at (-1,.3) [vertex]  {}; 
    \node (g2) at (-.5,.3) [vertex]  {}; 
    \node at (0, .3) {$\dots$};
    \node (g3) at (.4,.3) [vertex]  {};

    \draw [-] (v) to (l1);
    \draw [-] (v) to (l2);
    \draw [-] (v) to (l3);
    \draw [-] (v) to (l4);
    \draw [-] (v) to (l5);
    \draw [-] (v) to (g1);
    \draw [-] (v) to (g2);
    \draw [-] (v) to (g3);

    \draw [decorate,decoration={brace,amplitude=5pt,mirror,raise=1ex}]
  (g1) -- (g3) node[midway,yshift=-2em]{$m$};

  \draw [decorate,decoration={brace,amplitude=5pt,raise=1ex}]
  (l1) -- (l3) node[midway,yshift=1em]{$m$};
    
\end{tikzpicture}
\hspace{1 cm}
\begin{tikzpicture} [auto, vertex/.style={circle,draw=black!100,thin,inner sep=0pt,minimum size=1.5mm}, flood/.style={circle,draw=black!100,fill=cyan!100, thin,inner sep=0pt,minimum size=1.5mm}]
    \node[ellipse, thick,
    draw = black,
    minimum width = 4cm, 
    minimum height = 2cm] (e) at (0,0) {};

    \node at (-2, 1.2) {$G' \oplus P_3$:};


    \node (l1) at (-1,2) [vertex]  {}; 
    \node (l2) at (-.5,2) [vertex]  {}; 
    \node at (0, 2) {$\dots$};
    \node (l3) at (.4,2) [vertex]  {};
    
    \node (v) at (2.5,1.2) [vertex, fill = orange, label=right: $v$]  {}; 
    \node (l4) at (2.5,2) [vertex, fill = orange, label=right: $l_{m+1}$]  {};
    \node (l5) at (2.5,.3) [vertex, fill = orange, label=right: $l_{m+2}$]  {};

    \node (g1) at (-1,.3) [vertex]  {}; 
    \node (g2) at (-.5,.3) [vertex]  {}; 
    \node at (0, .3) {$\dots$};
    \node (g3) at (.4,.3) [vertex]  {};

    \draw [-] (v) to (l4);
    \draw [-] (v) to (l5);
    
    \draw [-] (g1) to (l1);
    \draw [-] (g2) to (l2);
    \draw [-] (g3) to (l3);

    \draw [decorate,decoration={brace,amplitude=5pt,mirror,raise=1ex}]
  (g1) -- (g3) node[midway,yshift=-2em]{$m$};

  \draw [decorate,decoration={brace,amplitude=5pt,raise=1ex}]
  (l1) -- (l3) node[midway,yshift=1em]{$m$};
    
\end{tikzpicture}
\end{center}
    
\end{example}

\begin{proof}[Proof of Lemma \ref{Shaboingboing}]
Since $G$ and $G' \oplus P_3$ have the same vertex set, we can prove this result by showing that $C \in \FG$ if and only if $C \in \mathcal{F}(G' \oplus P_3)$.

Suppose for contradiction that there exists a flooding cascade set $C \in \FG$ such that $C \notin \mathcal{F}(G' \oplus P_3)$ and we will assume that $C$ is largest such set.  That is to say if $|C'| > |C|$ and $C' \in \FG$, then $C' \in \mathcal{F}(G' \oplus P_3)$.  Clearly $|C| \neq n$ because if $|C| = n$, then $C$ is trivially an element of $\mathcal{F}(G' \oplus P_3)$.  To avoid confusion, we will write $C_1(G)$ for the first element in the flood sequence of $C$ on the graph $G$.  Since $|C|<n$ and $C \in \FG$, there must be an element $x \in C_1(G) - C$.  Note that by Proposition \ref{flooding superset}, $C_1(G) \in \FG$, so if we can conclude that $x \in C_1(G' \oplus P_3)$ (and hence $C_1(G) \subseteq C_1(G' \oplus P_3)$), then we can conclude  that $C \in \mathcal{F}(G' \oplus P_3)$.  This is because $|C_1(G)| > |C|$.


There are three cases to consider: $x = v$, $x \in \{g_1, \dots, g_m\}$, or $x$ is not a neighbor of $v$.  If $x=v$, then $x \in C_1(G'\oplus P_3)$ because $l_{m+1}$ and $l_{m+2}$ are leaves so they must be in $C$.  If $x \in \{g_1, \dots, g_m\}$, the $C$ contains at least one neighbor of $x$ not equal to $v$.  Without loss of generality we can say $x = g_1$.  Since $C$ contains at least one neighbor of $x$ not equal to $v$ and $C$ contains $l_1$, we have that $x \in C_1(G' \oplus P_3).$  The final case to consider is $x$ is not a neighbor of $v$.  In this case, $x \in C_1(G' \oplus P_3)$ since the only new edges created in $G'$ involve $v$.

Therefore if $C \in \FG$, then $C \in \mathcal{F}(G' \oplus P_3)$.

Now suppose for contradiction that there exists a flooding cascade set $C \in \mathcal{F}(G' \oplus P_3)$ such that $C \notin \FG$ and we will assume that $C$ is largest such set.  That is to say if $|C'| > |C|$ and $C' \in \mathcal{F}(G' \oplus P_3)$, then $C' \in \FG$.  Clearly $|C| \neq n$ because if $|C| = n$, then $C$ is trivially an element of $\FG$.

If $v \notin C$, then $v \in C_1(G' \oplus P_3)$ since $l_{m+1}$ and $l_{m+2}$ are leaves so they must be in $C$.  Similarly, $v \in C_1(G)$.  Note that $C \cup \{v\} \in \FG$ since $|C \cup \{v\}| > |C|$ and $C_1(G) \supseteq C \cup\{v\}$. So $C_1(G) \in \FG$ and $C \in \FG$.

Now suppose $v \in C$.  Let $x \in C_1(G' \oplus P_3)-C$.  It's either the case that $x \in \{g_1, \dots, g_m\}$ or $x$ is not a neighbor of $v$.  If $x \in \{g_1, \dots, g_m\}$, then without loss of generality we can say $x = g_1$.  Then $C$ contains at least one neighbor of $x$ that is not equal to $l_1$ and also contains $v$.  Therefore $x \in C_1(G)$.  If $x$ is not a neighbor of $v$, then $x \in C_1(G)$ since the only new edges created in $G'$ involve $v$.  

Therefore if $C \in \mathcal{F}(G' \oplus P_3)$, then $C \in \FG$ as desired. 
\end{proof}

\subsection{Path graphs with an even number of vertices} \label{Path graphs with an even number of vertices}

We now show that the flood polynomial for the path with $2n$ vertices has the same flood polynomial as the disjoint union of the path with $n$ vertices with the cycle with $n$ vertices.  This will lead to an alternate proof of a well-known result about even-indexed Fibonacci polynomials that $f_{2n}(x) = f_n(x)\cdot L_n(x)$ (see \cite{Benjamin Quinn}).

\begin{thm} \label{An Alternate Proof that $F_{2n}(x) = F_n(x)L_n(x)$}
    For $n \geq 3, F_{P_{2n}}(x) = F_{P_n}(x) \cdot F_{O_n}(x)$. 
\end{thm}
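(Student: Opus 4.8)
The plan is to realize the identity as a cardinality-preserving bijection between flood sets. By Proposition~\ref{disconnected}, $F_{P_n}(x)\cdot F_{O_n}(x)=F_{P_n\oplus O_n}(x)$, so it is enough to produce a bijection $\Phi\colon\mathcal{F}(P_{2n})\to\mathcal{F}(P_n)\times\mathcal{F}(O_n)$ with $|\Phi_1(C)|+|\Phi_2(C)|=|C|$ for every $C$. The first step is to re-encode both sides in terms of compositions. By Theorem~\ref{PPP Path}, $C\in\mathcal{F}(P_m)$ is exactly a set containing $v_1$ and $v_m$ with no two consecutive vertices missing; recording the successive gaps between the elements of $C$ identifies $\mathcal{F}(P_m)$ with the set of compositions of $m-1$ into parts of size $1$ and $2$, a $k$-element flood set corresponding to a composition with $k-1$ parts. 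By Lemma~\ref{cycleflood}, $C\in\mathcal{F}(O_m)$ is exactly a set whose complement is independent; assigning to cell $i$ of a circular $m$-board (when $w_i\in C$) a square or a domino according to whether $w_{i+1}\in C$ or not identifies $\mathcal{F}(O_m)$ with the tilings of a labelled circular $m$-board by squares and dominoes -- ``circular compositions'' of $m$ into $1$'s and $2$'s -- a $k$-element flood set corresponding to a circular composition with exactly $k$ parts.

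Under these encodings the theorem becomes: compositions of $2n-1$ into $1$'s and $2$'s with $t$ parts are in bijection with pairs consisting of a composition of $n-1$ with $s$ parts and a circular composition of $n$ with $r$ parts, where $t=s+r$. This is the composition form of the classical identity $f_{2n}(x)=f_n(x)L_n(x)$ (cf.\ \cite{Benjamin Quinn}), which I would prove by splitting at the middle. Given a composition $\gamma$ of $2n-1$, look at its partial sums. If $n-1$ occurs as a partial sum, write $\gamma=(\alpha,\beta)$ with $\alpha$ a composition of $n-1$ and $\beta$ a composition of $n$, regard $\beta$ as an ``in-phase'' circular composition of $n$ (one breakable at the seam), and set $\Phi(\gamma)=(\alpha,\beta)$. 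If $n-1$ does not occur, then -- because every part is $1$ or $2$ -- the partial sums jump from $n-2$ to $n$, so $\gamma=(\eta,2,\delta)$ with $\eta$ a composition of $n-2$ and $\delta$ a composition of $n-1$; regard $(\eta,2)$ as an ``out-of-phase'' circular composition of $n$ (the part $2$ being the domino straddling the seam, so that it has $\ell(\eta)+1$ parts) and set $\Phi(\gamma)=(\delta,(\eta,2))$. In the first case $\ell(\gamma)=\ell(\alpha)+\ell(\beta)$ and in the second $\ell(\gamma)=\ell(\eta)+1+\ell(\delta)$; in both cases this is $s+r$, which translates to $|C|=|\Phi_1(C)|+|\Phi_2(C)|$ once we note that the $+1$ shift of the path encoding occurs once on each side, through the factor $F_{P_{2n}}(x)$ on one side and $F_{P_n}(x)$ on the other.

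To see $\Phi$ is a bijection I would exhibit the inverse: given $(\alpha,\mu)$ with $\alpha$ a composition of $n-1$ and $\mu$ a circular composition of $n$, if $\mu$ is in-phase, unroll it to a linear composition $\beta$ of $n$ and output $\gamma=(\alpha,\beta)$; if $\mu$ is out-of-phase, peel off its seam domino to get a composition $\eta$ of $n-2$ and output $\gamma=(\eta,2,\alpha)$. One checks these are mutually inverse, using that $n-1$ is a partial sum of $(\alpha,\beta)$ (at the junction) but not of $(\eta,2,\alpha)$. Having established $F_{P_{2n}}(x)=F_{P_n}(x)\cdot F_{O_n}(x)$, and recalling $F_{P_m}(x)=f_m(x)$ and $F_{O_m}(x)=L_m(x)$ for $m\geq 3$, we recover $f_{2n}(x)=f_n(x)L_n(x)$, now proved by counting flooding cascade sets.

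The step I expect to cost the most effort is the circular-composition encoding of $\mathcal{F}(O_n)$ and the accompanying in-phase/out-of-phase dichotomy: one must pin down precisely what data a ``labelled circular composition'' carries, so that $C\mapsto$ (tiling of the labelled circular board) is a genuine bijection and not merely a bijection up to rotation, and then confirm that peeling off the seam domino of an out-of-phase circular composition really leaves a valid linear composition of $n-2$ (equivalently, that an in-phase one unrolls to a bona fide composition of $n$). Once that is in place the size bookkeeping is automatic, since a flood set of $P_m$ has one more element than the number of parts of its composition while a flood set of $O_m$ has exactly as many, and the extra part $2$ in the second case of $\Phi$ is compensated precisely by the seam domino.
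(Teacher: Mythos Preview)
Your proposal is correct, and once the composition and tiling encodings are unwound it is the same bijection the paper constructs: your dichotomy on whether $n-1$ is a partial sum of the gap composition of $C$ is exactly the paper's dichotomy on whether $v_n\in C$, and in each case you send the same half of $P_{2n}$ to $P_n$ and the other half to $O_n$. The paper simply works directly with vertex subsets, verifying the parallel-path and cycle-flood properties for each piece, and thereby sidesteps the labelled-bracelet encoding of $\mathcal{F}(O_n)$ that you correctly identify as the most delicate step; your route, by contrast, makes the link to the Benjamin--Quinn tiling proof of $f_{2n}=f_nL_n$ transparent.
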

\begin{proof}
    Recall that both $\mathcal{F}(P_{2n})$ and $\mathcal{F}(P_n\oplus O_n)$ are sets of flooding cascade sets. We will create a bijection between $\mathcal{F}(P_{2n})$ and $\mathcal{F}(P_n\oplus O_n)$ that preserves the size of the flooding cascade set.  Once the bijection is established, the result follows immediately from Proposition \ref{disconnected}. 

    Let $\{v_1, \dots v_{2n}\}$ be the set of vertices of $P_{2n}$ and let $\{a_1, \dots a_{n}\} \cup \{b_1, \dots b_{n}\}$ be the set vertices of $P_n\oplus O_n$. The edge set of $P_{2n}$ is $\{v_1v_2, \dots v_{2n-1}v_{2n}\}$ and the edge set of $P_n\oplus O_n$ is $\{a_1a_2, \dots a_{n-1}a_{n}\} \cup \{b_{1}b_{2}, \dots b_{n-1}b_{n}, b_{1}b_{n}\}$.  That is to say, the $a$ vertices are the vertices of the path and the $b$ vertices are the vertices of the cycle.

    
    Let us begin by mapping the flooding cascade sets in $\mathcal{F}(P_{2n})$ to those of $F(P_n\oplus O_n)$. Let $C\in\mathcal{F}(P_{2n})$. Since $P_{2n}$ is a path graph, by Theorem \ref{PPP Path} we know that $C$ has the parallel path property so both $v_1$ and $v_{2n}$ are in $C$ and that for all $1<i<2n$, if $v_i \notin C$, then $v_{i-1} \in C$ and $v_{i+1} \in C$.  There are two cases to consider when describing our map: $v_n \in C$, and $v_n \notin C$.



If $v_n \in C$, then let $A$ be the set defined by for $1\leq i \leq n$, $a_i \in A$ if and only if $v_i \in C$.  Similarly, let $B$ be the set defined by for $1\leq i \leq n$, $b_i \in B$ if and only if $v_{n+i} \in C$.  Note that $|C| = |A| + |B|$.  Since $C$ has the parallel path property and $\{a_1, a_n\} \subseteq A$, it follows that $A$ has the parallel path property so $A \in \mathcal{F}(P_n)$.  Since $v_{2n} \in C$, it follows that $b_n \in B$ and $B$ has the cycle flood property.  Therefore $B \in \mathcal{F}(O_n)$.  This case accounts for all elements of $B \in \mathcal{F}(O_n)$ where $b_n \in B$.

If $v_n \notin C$, then since $C$ has the parallel path property, it follows that $v_{n+1} \in C$.  Let $A$ be the set defined by for $1\leq i \leq n$, $a_i \in A$ if and only if $v_{n+i} \in C$.  Similarly, let $B$ be the set defined by for $1\leq i \leq n$, $b_i \in B$ if and only if $v_{i} \in C$.  As with the previous case, note that $|C| = |A| + |B|$.  Since $C$ has the parallel path property and $\{a_1, a_n\} \subseteq A$, it follows that $A$ has the parallel path property so $A \in \mathcal{F}(P_n)$.  Since $v_{1} \in C$, it follows that $b_1 \in B$ and $B$ has the cycle flood property.  Therefore $B \in \mathcal{F}(O_n)$.  This case accounts for all elements of $B \in \mathcal{F}(O_n)$ where $b_n \notin B$.

In both of these cases, the map can easily be reversed.  Therefore there exists a size-preserving bijection between the flooding cascade sets of $P_{2n}$ and those of $P_n$ and $O_n$, so $F_{P_{2n}}(x) = F_{P_n}(x) \cdot F_{O_n}(x)$. 
\end{proof}

Subbing in $x=1$ gives the following relation between Fibonacci and Lucas numbers.

\begin{cor}
    For all $n \geq 3$, $f_{2n}= f_n\cdot L_n$.
\end{cor}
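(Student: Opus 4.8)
The plan is to obtain this identity as an immediate specialization of Theorem~\ref{An Alternate Proof that $F_{2n}(x) = F_n(x)L_n(x)$} at $x = 1$. First I would note that the theorem gives the polynomial identity $F_{P_{2n}}(x) = F_{P_n}(x)\cdot F_{O_n}(x)$ for every $n \geq 3$, and since this is an equality of polynomials it remains valid upon substituting any value of $x$; substituting $x = 1$ yields $F_{P_{2n}}(1) = F_{P_n}(1)\cdot F_{O_n}(1)$.

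Next I would translate each of the three evaluations into its combinatorial count, using Proposition~\ref{size of graph} together with the enumerative corollaries already established. By Corollary~\ref{fibonacci numbers}, $F_{P_m}(1) = f_m$ for all $m \geq 1$; applying this with $m = 2n$ gives $F_{P_{2n}}(1) = f_{2n}$, and with $m = n$ gives $F_{P_n}(1) = f_n$. For the cycle factor I would invoke the corollary following Theorem~\ref{Cycle polynomial}, which states that $F_{O_n}(1) = L_n$ for $n \geq 3$. Plugging these three equalities into the displayed relation gives $f_{2n} = f_n \cdot L_n$, as desired.

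There is no real obstacle here: the only point requiring any attention is the hypothesis $n \geq 3$, which is exactly what is needed for $O_n$ to be well-defined and for its flood set to be counted by $L_n$, whereas the Fibonacci identifications $F_{P_m}(1) = f_m$ already hold on the wider range $m \geq 1$ and so impose no further restriction. (Equivalently, one could bypass the polynomial identity entirely and simply count the images under the size-preserving bijection constructed in the proof of Theorem~\ref{An Alternate Proof that $F_{2n}(x) = F_n(x)L_n(x)$}, but specializing the already-proved polynomial statement is the cleanest route.) Thus the corollary follows directly from the theorem and the counting results for paths and cycles proved earlier.
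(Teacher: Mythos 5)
Your proposal is correct and is exactly the paper's argument: the corollary is obtained by substituting $x=1$ into Theorem \ref{An Alternate Proof that $F_{2n}(x) = F_n(x)L_n(x)$} and identifying $F_{P_m}(1)=f_m$ and $F_{O_n}(1)=L_n$ via the earlier counting corollaries. The only difference is that you spell out these identifications explicitly, which the paper leaves implicit.
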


In Question \ref{Other Fib Polys}, we give some ideas for how Theorem \ref{An Alternate Proof that $F_{2n}(x) = F_n(x)L_n(x)$} may be generalized.

\subsection{Centipede} \label{centipede}

A \emph{centipede graph of type $\alpha \vDash (n-1)$} for $n \geq 3$ is a graph with \\ $n+4\cdot(\ell(\alpha)-1)$ vertices, 
denoted $\cat_{\alpha}$. It is the graph with vertex set \[V(\cat_\alpha) = \{v_1, \dots, v_n\} \cup \{l_{d,1}, \dots l_{d,4} \mid d \in D(\alpha)\}\] and edge set 
\[E(\cat_\alpha)=\{v_1v_2, \dots, v_{n-1}v_n\} \cup \{v_{d+1}l_{d,1}, \dots v_{d+1}l_{d,4} \mid d \in D(\alpha)\}.\]
In plain words, $\cat_\alpha$ is the graph that can be constructed by starting with an $n$-element path graph and then appending four leaves to each $v_{d+1}$ where $d \in D(\alpha)$.

\begin{center}
\begin{tikzpicture}
\node at (-1.3, 0) {$\cat_{(1, 2 , 2)}:$};
\node (v1) at ( 0,0) [vertex] {};
\node (v2) at ( 1,0) [vertex] {};

\node (l21) at ( .8,.5) [vertex] {};
\node (l22) at ( 1.2,.5) [vertex] {};
\node (l23) at ( .8,-.5) [vertex] {};
\node (l24) at ( 1.2,-.5) [vertex] {};

\node (v3) at ( 2,0) [vertex] {};
\node (v4) at ( 3,0) [vertex] {};

\node (l41) at ( 2.8,.5) [vertex] {};
\node (l42) at ( 3.2,.5) [vertex] {};
\node (l43) at ( 2.8,-.5) [vertex] {};
\node (l44) at ( 3.2,-.5) [vertex] {};

\node (v5) at ( 4,0) [vertex] {};
\node (v6) at ( 5,0) [vertex] {};
\draw [-] (v1) to (v2);
\draw [-] (v2) to (v3);
\draw [-] (v3) to (v4);
\draw [-] (v4) to (v5);
\draw [-] (v5) to (v6);

\draw [-] (v2) to (l21);
\draw [-] (v2) to (l22);
\draw [-] (v2) to (l23);
\draw [-] (v2) to (l24);

\draw [-] (v4) to (l41);
\draw [-] (v4) to (l42);
\draw [-] (v4) to (l43);
\draw [-] (v4) to (l44);

\end{tikzpicture}
\end{center}

\begin{thm} \label{Cent Polynomial}
    The centipede graph $\cat_\alpha$ has the same flood polynomial as the disjoint union of $2\cdot\ell(\alpha) - 1$ path graphs.  In particular,
    \[ F_{\cat{\alpha}}(x) = (F_{P_3}(x))^{\ell(\alpha)-1}\cdot\prod_{j=1}^{l(\alpha)}F_{P_{\alpha_j+1}}(x) \]
\end{thm}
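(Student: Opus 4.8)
The plan is to induct on $\ell(\alpha)$, peeling off one ``leg'' of the centipede at each step; note that both $\cat_\alpha$ and the claimed product are invariant under reversing $\alpha$, so throughout I may replace $\alpha$ by $(\alpha_\ell,\dots,\alpha_1)$ whenever convenient (this reverses the underlying path). When $\ell(\alpha)=1$ we have $D(\alpha)=\varnothing$, so $\cat_\alpha=P_n=P_{\alpha_1+1}$ and the identity is trivial. Assume $\ell(\alpha)\geq 2$, let $d_1=\alpha_1$ be the least element of $D(\alpha)$, and set $w=v_{\alpha_1+1}$, the vertex carrying the leftmost leg. Then $w$ has degree $6$: its four leg-leaves $l_{\alpha_1,1},\dots,l_{\alpha_1,4}$ and its two path-neighbours $v_{\alpha_1}$ and $v_{\alpha_1+2}$. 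A one-line index check (using $\min D(\alpha)=\alpha_1$, and that legs never sit at $v_1$ or $v_n$) shows $v_{\alpha_1}$ is a leaf exactly when $\alpha_1=1$, and $v_{\alpha_1+2}$ is a leaf exactly when $\ell(\alpha)=2$ and $\alpha_2=1$. After a reversal we may arrange to be in one of three cases.

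\emph{Case (a): $w$ has exactly four leaf neighbours} (i.e.\ $\alpha_1\geq 2$ and we are not in the situation $\ell(\alpha)=2,\alpha_2=1$). Then Lemma~\ref{Shaboingboing} applies to $w$ with $m=2$, and I will verify that the graph $G'$ it produces is the disjoint union of the path $v_1v_2\cdots v_{\alpha_1}l_{\alpha_1,1}\cong P_{\alpha_1+1}$ with the graph carried by $l_{\alpha_1,2}v_{\alpha_1+2}\cdots v_n$ together with the remaining legs, which after relabelling the path is precisely $\cat_{(\alpha_2,\dots,\alpha_\ell)}$. By Lemma~\ref{Shaboingboing} and Proposition~\ref{disconnected},
\[
F_{\cat_\alpha}(x)=F_{P_3}(x)\cdot F_{P_{\alpha_1+1}}(x)\cdot F_{\cat_{(\alpha_2,\dots,\alpha_\ell)}}(x),
\]
and the inductive hypothesis applied to $(\alpha_2,\dots,\alpha_\ell)$ (which has $\ell(\alpha)-1$ parts) finishes this case. \emph{Case (b): $\alpha=(1,1)$}, so $\cat_{(1,1)}\cong K_{1,6}$; every flooding cascade set must contain all six leaves by Corollary~\ref{leaves in cascade set}, after which the centre floods, so $F_{\cat_{(1,1)}}(x)=x^6+x^7=F_{P_3}(x)\cdot F_{P_2}(x)^2$ (using Theorem~\ref{path polynomial}), which is the claimed value.

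\emph{Case (c): otherwise}, where after reversing we may assume $\alpha_1=1$ and $\alpha\neq(1,1)$; then $w=v_2$ has exactly five leaf neighbours ($v_1$ together with $l_{1,1},\dots,l_{1,4}$) and exactly one non-leaf neighbour, $v_3$. To handle this I will first prove a mild extension of Lemma~\ref{Shaboingboing}: \emph{if a vertex $v$ of a graph $G$ has exactly $p\geq 2$ leaf neighbours $l_1,\dots,l_p$ and exactly one non-leaf neighbour $g$, then $F_G(x)=x^{p-1}(x+1)\,F_H(x)$, where $H=G-\{l_1,\dots,l_p\}$ (in which $v$ is a leaf adjacent to $g$).} The argument mirrors the proof of Lemma~\ref{Shaboingboing}: by Corollary~\ref{leaves in cascade set} each $C\in\mathcal{F}(G)$ contains $l_1,\dots,l_p$ and hence $v\in C_1$, and since the $l_i$ are pendants on $v$, the cascade sequence of $C$ on $G$ restricted to $V(H)$ coincides with that of $(C\cap V(H))\cup\{v\}$ on $H$; thus $C\in\mathcal{F}(G)$ iff $\{l_1,\dots,l_p\}\subseteq C$ and $(C\cap V(H))\cup\{v\}\in\mathcal{F}(H)$. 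Therefore $F_G(x)=x^{p}\sum_{D\subseteq V(H),\,D\cup\{v\}\in\mathcal{F}(H)}x^{|D|}$; splitting according to whether $v\in D$, each $E\in\mathcal{F}(H)$ with $v\in E$ contributes $x^{|E|}+x^{|E|-1}$, and since $v$ is a leaf of $H$, Corollary~\ref{leaves in cascade set} forces $v\in E$ for \emph{every} $E\in\mathcal{F}(H)$, so the sum is $(1+x^{-1})F_H(x)$, giving the extension. Applying it with $v=v_2$, $p=5$: here $H=\cat_\alpha-\{v_1,l_{1,1},\dots,l_{1,4}\}$ is the path $v_2v_3\cdots v_n$ with the remaining legs, which after relabelling the path is exactly $\cat_{(\alpha_2,\dots,\alpha_\ell)}$; since $x^{4}(x+1)=F_{P_2}(x)F_{P_3}(x)=F_{P_{\alpha_1+1}}(x)F_{P_3}(x)$ (because $\alpha_1=1$), the inductive hypothesis again yields $F_{\cat_\alpha}(x)=F_{P_3}(x)^{\ell(\alpha)-1}\prod_{j=1}^{\ell(\alpha)}F_{P_{\alpha_j+1}}(x)$.

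The step I expect to be the main obstacle is exactly the boundary behaviour forcing the three-way split: when a leg is attached adjacent to an endpoint of the path, its attachment vertex has five (or, for $\alpha=(1,1)$, six) leaf neighbours rather than four, so Lemma~\ref{Shaboingboing} does not apply as stated, and the extension lemma in Case (c) is the main new ingredient. The remaining work is routine but fiddly: a careful check of which vertices and edges survive the deletion and edge-addition prescribed by these lemmas, so that the leftover graph really is a disjoint union of the shorter centipede $\cat_{(\alpha_2,\dots,\alpha_\ell)}$ with the path $P_{\alpha_1+1}$ (and, in Case (a), an extra $P_3$).
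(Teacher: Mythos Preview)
Your proof is correct and follows the same induction on $\ell(\alpha)$ as the paper, peeling off the leftmost leg at $v_{\alpha_1+1}$. The difference lies entirely in the boundary issue you flag as the main obstacle. The paper does not split into cases: it applies Lemma~\ref{Shaboingboing} uniformly with $m=2$, taking $\{l_1,\dots,l_4\}$ to be the four leg-leaves and $\{g_1,g_2\}=\{v_{\alpha_1},v_{\alpha_1+2}\}$ regardless of whether the latter happen to be leaves. This works because the proof of Lemma~\ref{Shaboingboing} only ever uses that the designated $l_i$ are leaves (to force $l_i\in C$), never that the $g_i$ are non-leaves; when some $g_i$ is a leaf it is automatically in $C$ and the case ``$x=g_i\in C_1\setminus C$'' simply does not arise. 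With that reading one gets $G'\cong P_{\alpha_1+1}\oplus\cat_{(\alpha_2,\dots,\alpha_\ell)}$ in every case, including $\alpha_1=1$ and $\alpha=(1,1)$, with no case split needed. Your Cases~(b) and~(c) and the extension lemma are therefore valid but unnecessary once one observes this implicit strengthening; your write-up is more scrupulous about matching the stated hypotheses of Lemma~\ref{Shaboingboing}, while the paper's is shorter but tacitly relies on the lemma holding in slightly greater generality than its statement.
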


\begin{proof}
We will prove this by inducting on $\ell(\alpha)$.

If $\ell(\alpha) = 1$, then $\alpha = (n-1)$ and $\cat_\alpha \cong P_{n}$.  Therefore $F_{\cat_{(n-1)}}(x) = F_{P_n}(x)$ as desired.

Now suppose $\ell(\alpha) > 1$ and \[F_{\cat{\beta}}(x) = (F_{P_3}(x))^{\ell(\beta)-1}\cdot\prod_{j=1}^{l(\beta)}F_{P_{\beta_j+1}}(x)\] whenever $\ell(\beta) < \ell(\alpha)$.

Let $\alpha'$ be the composition defined by $\alpha' =(\alpha_2, \cdots, \alpha_{\ell(\alpha)})$.
Note that applying Lemma \ref{Shaboingboing} with $v=v_{\alpha_1+1}$ gives us that \[F_{\cat{\alpha}}(x) = F_{P_3}(x)\cdot F_{P_{\alpha_1+1}}(x) \cdot F_{\cat{\alpha'}}(x).\]

Since $\ell(\alpha')=\ell(\alpha)-1<\ell(\alpha)$, we have that 
\[ F_{\cat{\alpha'}}(x) = (F_{P_3}(x))^{\ell(\alpha')-1}\cdot\prod_{j=1}^{l(\alpha')}F_{P_{\alpha'_j+1}}(x) = (F_{P_3}(x))^{\ell(\alpha)-2}\cdot\prod_{j=2}^{l(\alpha)}F_{P_{\alpha_j+1}}(x). \]

Hence \[ F_{\cat{\alpha}}(x) = (F_{P_3}(x))^{\ell(\alpha)-1}\cdot\prod_{j=1}^{l(\alpha)}F_{P_{\alpha_j+1}}(x) \] as desired.
\end{proof}

Notice that the values of the entries of $\alpha$ have an effect on the flood polynomial of $\cat_{\alpha}$, but the order in which they appear does not.

\begin{cor}
    If $\alpha \sim \lambda$ and $\beta \sim \lambda$, then $F_{\cat_{\alpha}}(x) = F_{\cat_{\beta}}(x)$.
\end{cor}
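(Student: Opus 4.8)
The plan is to deduce this directly from the closed formula of Theorem \ref{Cent Polynomial}. First I would observe that the right-hand side of that formula,
\[ F_{\cat{\alpha}}(x) = (F_{P_3}(x))^{\ell(\alpha)-1}\cdot\prod_{j=1}^{\ell(\alpha)}F_{P_{\alpha_j+1}}(x), \]
depends on $\alpha$ only through two pieces of data: the number of parts $\ell(\alpha)$, and the multiset $\{\alpha_1, \dots, \alpha_{\ell(\alpha)}\}$ of parts (the latter controls the product, which is taken over all parts regardless of order). Neither of these depends on the \emph{order} in which the parts of $\alpha$ are listed.

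Next I would unpack the hypothesis. If $\alpha \sim \lambda$ and $\beta \sim \lambda$, then by definition of $\sim$ (see Section \ref{comps and partitions}) both $\alpha$ and $\beta$ rearrange, in weakly decreasing order, to the same partition $\lambda$; equivalently, $\alpha$ and $\beta$ have the same multiset of parts, and in particular $\ell(\alpha) = \ell(\beta) = \ell(\lambda)$. Writing $\lambda = (\lambda_1, \dots, \lambda_{\ell(\lambda)})$, both products reindex to $\prod_{j=1}^{\ell(\lambda)} F_{P_{\lambda_j+1}}(x)$, since a finite product over a multiset is independent of the listing order.

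Putting these together, Theorem \ref{Cent Polynomial} gives
\[ F_{\cat_{\alpha}}(x) = (F_{P_3}(x))^{\ell(\lambda)-1}\cdot\prod_{j=1}^{\ell(\lambda)}F_{P_{\lambda_j+1}}(x) = F_{\cat_{\beta}}(x), \]
which is the claim. Since the entire argument is a one-line consequence of the already-established formula together with commutativity of multiplication, there is no real obstacle here; the only thing to be slightly careful about is simply citing the definition of the relation $\sim$ to justify that $\alpha$ and $\beta$ genuinely have the same multiset of parts (and hence the same length), rather than just the same sum.
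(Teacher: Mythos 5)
Your proposal is correct and is exactly the argument the paper intends: the corollary is an immediate consequence of the product formula in Theorem \ref{Cent Polynomial}, which depends only on the multiset of parts of $\alpha$ (hence only on $\lambda$), since the product is invariant under reordering. Nothing further is needed.
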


Combining Theorem \ref{Cent Polynomial} with the results of Section \ref{Pn} we get that the flood polynomial of centipede graphs is the product of Fibonacci polynomials and, as a result, the size of the flood set is a product of Fibonacci numbers.

\begin{cor}
    The number of flooding cascade sets of $\cat_{\alpha}$ is a product of Fibonacci numbers.  In particular 
    \[|\mathcal{F}(\cat_\alpha)|= f_3^{\ell(\alpha)-1}\cdot\prod_{j=1}^{l(\alpha)}f_{\alpha_j+1} .\]
\end{cor}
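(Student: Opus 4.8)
The plan is to derive this directly from Theorem \ref{Cent Polynomial}, which already expresses $F_{\cat_\alpha}(x)$ as a product of flood polynomials of paths, together with the two facts that the flood polynomial evaluated at $1$ counts the flood set and that $F_{P_n}(1) = f_n$.

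First I would invoke Proposition \ref{size of graph}, which gives $|\mathcal{F}(G)| = F_G(1)$ for any graph $G$; applying this to $G = \cat_\alpha$ reduces the claim to evaluating the right-hand side of Theorem \ref{Cent Polynomial} at $x = 1$. Next I would substitute $x = 1$ into the formula
\[ F_{\cat_\alpha}(x) = (F_{P_3}(x))^{\ell(\alpha)-1}\cdot\prod_{j=1}^{\ell(\alpha)}F_{P_{\alpha_j+1}}(x), \]
using that evaluation at a point is multiplicative, so the exponent and the product pass through unchanged. Finally I would apply Corollary \ref{fibonacci numbers}, which states $F_{P_n}(1) = f_n$ for all $n \geq 1$, to each factor: $F_{P_3}(1) = f_3$ and $F_{P_{\alpha_j+1}}(1) = f_{\alpha_j+1}$ (valid since each $\alpha_j \geq 1$, so $\alpha_j + 1 \geq 2 \geq 1$). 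Combining these yields
\[ |\mathcal{F}(\cat_\alpha)| = f_3^{\ell(\alpha)-1}\cdot\prod_{j=1}^{\ell(\alpha)} f_{\alpha_j+1}, \]
as desired.

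There is essentially no obstacle here: the corollary is a routine specialization of the polynomial identity. The only minor point to keep straight is that Corollary \ref{fibonacci numbers} requires its index to be at least $1$, which is automatic since compositions have positive parts, so every path appearing has at least two vertices. One could also remark, as the surrounding text does, that since $f_3 = 2$, this says $|\mathcal{F}(\cat_\alpha)| = 2^{\ell(\alpha)-1}\prod_{j=1}^{\ell(\alpha)} f_{\alpha_j+1}$, but that simplification is optional and not needed for the proof.
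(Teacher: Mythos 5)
Your proof is correct and follows exactly the route the paper intends: evaluate the product formula of Theorem \ref{Cent Polynomial} at $x=1$, using $|\mathcal{F}(G)| = F_G(1)$ and $F_{P_n}(1) = f_n$. This matches the paper's (implicit) justification, so there is nothing to add.
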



\subsection{Tick Graph} \label{tick}

A \emph{tick graph of size $\alpha \vDash n$} for $n \geq 3$ is a graph with $n+4\cdot(\ell(\alpha))$ vertices, 
denoted $\tick_{\alpha}$. Let $D = (D(\alpha)\cup \{n\})$. The tick graph $\tick_\alpha$ is the graph with vertex set 
\[V(\tick_\alpha) = \{v_1, \dots, v_n\} \cup \{l_{d,1}, \dots, l_{d,4} \mid d \in D\}\]
and edge set 
\[E(\tick_\alpha)= \{v_1v_2, \dots, v_{n-1}v_n, v_nv_1\} \cup \{v_dl_{d,1}, \dots, v_dl_{d,4} \mid d \in D\}.\]
In plain words, $\tick_\alpha$ can be constructed by starting with an $n$-element cycle graph and then appending four leaves to $v_d$ for all $d \in D$.

\begin{center}
\begin{tikzpicture}
\node at (-1.5, 1.5) {$\tick_{(2, 2)}:$};

\node (v1) at ( 0,0) [vertex] {};
\node (v2) at ( 1.5,0) [vertex] {};
\node (v3) at ( 0,1.5) [vertex] {};
\node (v4) at ( 1.5,1.5) [vertex] {};

\node (l11) at ( -.5,.5) [vertex] {};
\node (l12) at ( -.5,0) [vertex] {};
\node (l13) at ( 0,-.5) [vertex] {};
\node (l14) at ( .5,-.5) [vertex] {};

\node (l41) at ( 1,2) [vertex] {};
\node (l42) at ( 1.5,2) [vertex] {};
\node (l43) at ( 2, 1) [vertex] {};
\node (l44) at ( 2,1.5) [vertex] {};

\draw [-] (v1) to (v2);
\draw [-] (v1) to (v3);
\draw [-] (v3) to (v4);
\draw [-] (v2) to (v4);

\draw [-] (v1) to (l11);
\draw [-] (v1) to (l12);
\draw [-] (v1) to (l13);
\draw [-] (v1) to (l14);

\draw [-] (v4) to (l41);
\draw [-] (v4) to (l42);
\draw [-] (v4) to (l43);
\draw [-] (v4) to (l44);

\end{tikzpicture}
\end{center}

\begin{thm} \label{Tick Polynomial}
    The tick graph $\tick_\alpha$ has the same flood polynomial as the disjoint union of $2\cdot\ell(\alpha)$ path graphs.  In particular,
    \[ F_{\tick{\alpha}}(x) = (F_{P_3}(x))^{\ell(\alpha)}\cdot\prod_{j=1}^{l(\alpha)}F_{P_{\alpha_j+1}}(x) \]
\end{thm}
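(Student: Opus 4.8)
The plan is to reduce $\tick_\alpha$ to a centipede graph by a single application of Lemma~\ref{Shaboingboing} and then quote Theorem~\ref{Cent Polynomial}. This is shorter than the induction used for the centipede: deleting one vertex from the cycle underlying $\tick_\alpha$ breaks it into a single path, so one ``peeling'' move turns the tick graph into a path carrying leaf-bundles, i.e.\ a centipede, rather than another tick graph.

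First I would verify that $v=v_n$ meets the hypothesis of Lemma~\ref{Shaboingboing}. Since $\alpha\vDash n$ with $n\geq 3$, we have $n\in D$, so $v_n$ carries the four appended leaves $l_{n,1},\dots,l_{n,4}$ and is joined on the cycle to $v_{n-1}$ and $v_1$; these are distinct (because $n\geq3$) and each has degree at least $2$, hence neither is a leaf. Thus $\deg(v_n)=6=2\cdot 2+2$ with exactly $4=2+2$ leaf-neighbors, so Lemma~\ref{Shaboingboing} applies with $m=2$ and gives $F_{\tick_\alpha}(x)=F_{P_3}(x)\cdot F_{G'}(x)$, where $G'$ is obtained from $\tick_\alpha$ by deleting $v_n$, $l_{n,3}$, $l_{n,4}$ and all edges at $v_n$, then adding the edges $l_{n,1}v_1$ and $l_{n,2}v_{n-1}$. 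Next I would identify $G'$: removing $v_n$ opens the cycle $v_1v_2\cdots v_nv_1$ into the path $v_1\cdots v_{n-1}$, and the two reattached leaves extend it to the path $l_{n,1}\,v_1\cdots v_{n-1}\,l_{n,2}$ on $n+1$ vertices, while every leaf-bundle at $v_d$ with $d\in D(\alpha)\subseteq[n-1]$ is untouched. Relabelling $u_1=l_{n,1}$, $u_{i+1}=v_i$ for $1\le i\le n-1$, and $u_{n+1}=l_{n,2}$, the bundle at $v_d$ becomes a bundle at $u_{d+1}$, so $G'$ is exactly the centipede $\cat_\alpha$ (with $\alpha$ read as a composition of $n$, so the path has $n+1$ vertices and descent set $D(\alpha)$). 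Substituting $F_{G'}(x)=F_{\cat_\alpha}(x)=(F_{P_3}(x))^{\ell(\alpha)-1}\prod_{j=1}^{\ell(\alpha)}F_{P_{\alpha_j+1}}(x)$ from Theorem~\ref{Cent Polynomial} then gives $F_{\tick_\alpha}(x)=(F_{P_3}(x))^{\ell(\alpha)}\prod_{j=1}^{\ell(\alpha)}F_{P_{\alpha_j+1}}(x)$.

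The step requiring the most care is the identification $G'\cong\cat_\alpha$, specifically the descent-set bookkeeping and the boundary cases $\alpha_1=1$ and $\alpha_\ell=1$. If $\alpha_1=1$ then $1\in D(\alpha)$, so in $G'$ the vertex $u_2=v_1$ carries both its original four-leaf bundle and the newly attached leaf $u_1$, giving five leaf-neighbors; but in $\cat_\alpha$ the path-endpoint $u_1$ is itself a leaf, so $u_2$ there also has five leaf-neighbors and the local structures match. The symmetric remark at $u_{n+1}$ handles $\alpha_\ell=1$, so no special casing is ultimately needed. One could instead induct on $\ell(\alpha)$ in the style of Theorem~\ref{Cent Polynomial}, but peeling the single forced position $v_n$ and invoking the already-proved centipede formula is more economical.
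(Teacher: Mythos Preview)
Your proposal is correct and follows exactly the paper's approach: apply Lemma~\ref{Shaboingboing} at $v=v_n$ to obtain $F_{\tick_\alpha}(x)=F_{P_3}(x)\cdot F_{\cat_\alpha}(x)$, then invoke Theorem~\ref{Cent Polynomial}. You have in fact supplied more detail than the paper's two-line proof, explicitly checking the degree hypothesis at $v_n$ and verifying the isomorphism $G'\cong\cat_\alpha$ via the relabelling $u_1=l_{n,1},\,u_{i+1}=v_i,\,u_{n+1}=l_{n,2}$.
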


\begin{proof}
Applying Lemma \ref{Shaboingboing} with $v = v_n$ gives
\[F_{\tick{\alpha}}(x) = F_{P_3} \cdot F_{\cat{\alpha}}(x).\]
It follows from Theorem \ref{Cent Polynomial} that 
\[ F_{\tick{\alpha}}(x) = (F_{P_3}(x))^{\ell(\alpha)}\cdot\prod_{j=1}^{l(\alpha)}F_{P_{\alpha_j+1}}(x). \]
\end{proof}

As with the case of the centipede graph, the values of the entries of $\alpha$ have an effect on the flood polynomial of $\tick_{\alpha}$, but the order in which they appear does not.

\begin{cor}
    If $\alpha \sim \lambda$ and $\beta \sim \lambda$, then $F_{\tick_{\alpha}}(x) = F_{\tick_{\beta}}(x)$.
\end{cor}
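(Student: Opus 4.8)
The plan is to derive this corollary directly from Theorem \ref{Tick Polynomial}, which already expresses $F_{\tick_\alpha}(x)$ entirely in terms of data that is invariant under reordering the parts of $\alpha$. Concretely, Theorem \ref{Tick Polynomial} gives
\[ F_{\tick_\alpha}(x) = (F_{P_3}(x))^{\ell(\alpha)}\cdot\prod_{j=1}^{\ell(\alpha)}F_{P_{\alpha_j+1}}(x), \]
so the only ingredients are the number of parts $\ell(\alpha)$ and the product over the parts $\alpha_j$ of the polynomials $F_{P_{\alpha_j+1}}(x)$.

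First I would observe that $\alpha \sim \lambda$ and $\beta \sim \lambda$ means, by the definition in Section \ref{comps and partitions}, that both $\alpha$ and $\beta$ are rearrangements of the parts of the same partition $\lambda$; in particular they have the same multiset of parts. Hence $\ell(\alpha) = \ell(\lambda) = \ell(\beta)$, which matches the exponent of $F_{P_3}(x)$ in the two formulas. Next I would note that since $\{\alpha_1,\dots,\alpha_{\ell(\alpha)}\}$ and $\{\beta_1,\dots,\beta_{\ell(\beta)}\}$ are equal as multisets, and multiplication of polynomials is commutative, the products $\prod_{j}F_{P_{\alpha_j+1}}(x)$ and $\prod_{j}F_{P_{\beta_j+1}}(x)$ are the same polynomial (each equals $\prod_{j}F_{P_{\lambda_j+1}}(x)$). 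Combining these two observations term by term in the formula from Theorem \ref{Tick Polynomial} yields $F_{\tick_\alpha}(x) = F_{\tick_\beta}(x)$.

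There is essentially no obstacle here: the corollary is immediate once Theorem \ref{Tick Polynomial} is in hand, since that theorem already shows the flood polynomial depends only on the multiset of parts. The only thing to be careful about is invoking the definition of $\sim$ correctly so that the equality of multisets (and hence of $\ell$) is justified before appealing to commutativity of the product. One could equally well phrase it as: $\alpha \sim \lambda \sim \beta$ implies $\alpha$ and $\beta$ determine the same right-hand side in Theorem \ref{Tick Polynomial}, done. This mirrors exactly the analogous corollary for centipede graphs following Theorem \ref{Cent Polynomial}.
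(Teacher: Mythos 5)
Your proposal is correct and matches the paper's reasoning: the corollary is read off directly from the formula in Theorem \ref{Tick Polynomial}, which depends only on $\ell(\alpha)$ and the multiset of parts, both of which are shared by $\alpha$ and $\beta$ when $\alpha \sim \lambda \sim \beta$. The paper treats this as immediate for exactly the reasons you give.
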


Combining Theorem \ref{Tick Polynomial} with the results of Section \ref{Pn} we get that the flood polynomial of tick graphs is the product of Fibonacci polynomials and as a result, the size of the flood set is a product of Fibonacci numbers.

\begin{cor}
    The number of flooding cascade sets of $\tick_{\alpha}$ is a product of Fibonacci numbers.  In particular 
    \[|\mathcal{F}(\tick_\alpha)|= f_3^{\ell(\alpha)}\cdot\prod_{j=1}^{l(\alpha)}f_{\alpha_j+1} .\]
\end{cor}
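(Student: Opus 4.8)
The plan is to obtain this corollary simply by specializing Theorem \ref{Tick Polynomial} at $x = 1$. By Proposition \ref{size of graph}, the number of flooding cascade sets of any graph $G$ is $F_G(1)$, so in particular $|\mathcal{F}(\tick_\alpha)| = F_{\tick_\alpha}(1)$. Thus the entire argument reduces to evaluating the product formula for $F_{\tick_\alpha}(x)$ at $x=1$.

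First I would substitute $x = 1$ into the identity from Theorem \ref{Tick Polynomial}, which gives
\[ F_{\tick_\alpha}(1) = \bigl(F_{P_3}(1)\bigr)^{\ell(\alpha)}\cdot\prod_{j=1}^{\ell(\alpha)}F_{P_{\alpha_j+1}}(1). \]
Then I would invoke Corollary \ref{fibonacci numbers}, which asserts $F_{P_n}(1) = f_n$ for all $n \geq 1$. Since every part $\alpha_j$ of the composition $\alpha$ is a positive integer, we have $\alpha_j + 1 \geq 2 \geq 1$, so Corollary \ref{fibonacci numbers} applies to each factor, giving $F_{P_{\alpha_j+1}}(1) = f_{\alpha_j+1}$, and likewise $F_{P_3}(1) = f_3$. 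Combining these observations yields
\[ |\mathcal{F}(\tick_\alpha)| = f_3^{\ell(\alpha)}\cdot\prod_{j=1}^{\ell(\alpha)}f_{\alpha_j+1}, \]
which is the claimed formula, and in particular exhibits $|\mathcal{F}(\tick_\alpha)|$ as a product of Fibonacci numbers.

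There is essentially no obstacle in this argument; the only point requiring any care is checking that the hypotheses of the cited results are satisfied — namely that $\tick_\alpha$ is a well-defined graph (which holds since $n \geq 3$ is assumed in its definition) and that each path appearing has at least one vertex so that Corollary \ref{fibonacci numbers} applies, which we verified above. One could also note, in parallel with the corresponding corollary for centipede graphs, that this gives a combinatorial family whose flood set sizes are pure products of Fibonacci numbers.
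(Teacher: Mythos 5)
Your proposal is correct and matches the paper's intended argument: the paper derives this corollary exactly by combining Theorem \ref{Tick Polynomial} with the results of Section \ref{Pn}, i.e.\ evaluating the product formula at $x=1$ and using $F_{P_n}(1)=f_n$. Nothing further is needed.
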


It follows immediately from the proof of Theorem \ref{Tick Polynomial}, that every tick graph has the same flood polynomial as the disjoint union of a three-element path graph with a centipede graph.

\begin{cor}
    If $\alpha \sim \lambda$ and $\beta \sim \lambda$, then $F_{\tick_{\alpha}}(x) = F_{P_3}(x)\cdot F_{\cat{\beta}}(x)$.
\end{cor}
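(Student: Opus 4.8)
The plan is to read the result off from the two explicit product formulas already established, using nothing more than the fact that $\alpha$ and $\beta$ share the same underlying partition $\lambda$.

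First I would record the two formulas side by side. Theorem \ref{Tick Polynomial} gives
\[F_{\tick_\alpha}(x) = (F_{P_3}(x))^{\ell(\alpha)}\prod_{j=1}^{\ell(\alpha)}F_{P_{\alpha_j+1}}(x),\]
while Theorem \ref{Cent Polynomial} gives
\[F_{P_3}(x)\cdot F_{\cat_\beta}(x) = F_{P_3}(x)\cdot(F_{P_3}(x))^{\ell(\beta)-1}\prod_{j=1}^{\ell(\beta)}F_{P_{\beta_j+1}}(x) = (F_{P_3}(x))^{\ell(\beta)}\prod_{j=1}^{\ell(\beta)}F_{P_{\beta_j+1}}(x).\]
Thus it suffices to show these two right-hand sides coincide.

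The second step is the elementary observation that $\alpha \sim \lambda$ and $\beta \sim \lambda$ force $\ell(\alpha) = \ell(\lambda) = \ell(\beta)$ and force the multiset $\{\alpha_1,\dots,\alpha_{\ell(\alpha)}\}$ to equal the multiset $\{\beta_1,\dots,\beta_{\ell(\beta)}\}$, both being the multiset of parts of $\lambda$. Since polynomial multiplication is commutative, $\prod_j F_{P_{\alpha_j+1}}(x) = \prod_j F_{P_{\lambda_j+1}}(x) = \prod_j F_{P_{\beta_j+1}}(x)$, and the exponents on $F_{P_3}(x)$ also agree. Hence $F_{\tick_\alpha}(x) = F_{P_3}(x)\cdot F_{\cat_\beta}(x)$.

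There is essentially no obstacle here; alternatively, and perhaps more transparently, one can bypass the explicit formulas entirely: the \emph{proof} of Theorem \ref{Tick Polynomial} establishes, via a single application of Lemma \ref{Shaboingboing} at $v_n$, the identity $F_{\tick_\alpha}(x) = F_{P_3}(x)\cdot F_{\cat_\alpha}(x)$, and the Corollary immediately following Theorem \ref{Cent Polynomial} gives $F_{\cat_\alpha}(x) = F_{\cat_\beta}(x)$ whenever $\alpha\sim\lambda$ and $\beta\sim\lambda$; chaining these two equalities yields the claim at once. The only point worth flagging in the write-up is that the identity $F_{\tick_\alpha}(x) = F_{P_3}(x)\cdot F_{\cat_\alpha}(x)$ is drawn from the proof of Theorem \ref{Tick Polynomial} rather than its statement — which is exactly the remark made in the sentence preceding this corollary.
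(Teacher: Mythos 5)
Your proposal is correct and takes essentially the same route as the paper: the paper's justification is exactly the identity $F_{\tick_\alpha}(x) = F_{P_3}(x)\cdot F_{\cat_\alpha}(x)$ extracted from the proof of Theorem \ref{Tick Polynomial} (via Lemma \ref{Shaboingboing} at $v_n$), combined with the rearrangement-invariance $F_{\cat_\alpha}(x)=F_{\cat_\beta}(x)$, which is equivalent to your direct comparison of the two product formulas. Nothing further is needed.
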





\section{Discussion and Open Questions}

We conclude this article with some open questions, the difficulty of which remain unclear.

\begin{quest}
    Can the results of Section \ref{leaves and triggers} be generalized to determine the number of certain three-element subgraphs of $G$ from $\F$?
\end{quest}

\begin{quest} \label{Other Fib Polys}
    Since $F_{P_n}(x)$ is a Fibonacci polynomial, it is well-known that if $m$ divides $n$, then $F_{P_m}(x)$ divides $F_{P_n}(x)$ \cite{Bicknell}. For what $m$ and $n$ does there exist a graph $H$ where $F_{P_n}(x)$ = $F_{P_m}(x)\cdot F_H(x)$? The case where $n=2m$ is Theorem \ref{An Alternate Proof that $F_{2n}(x) = F_n(x)L_n(x)$}.  When $m=3$ and $n=9$, there is no such graph.
\end{quest}

\begin{quest}
    For what polynomials $p(x)$ does there exist a graph $G$ such that $\F = p(x)$.  We know from Proposition \ref{coefficients} some necessary conditions that the coefficients of $p(x)$ must satisfy, but can these polynomials be completely classified?
\end{quest}

In \cite{Alexander Hearding}, the authors introduce chainsaw graphs and broken chainsaw graphs.  They proved that the number of independent vertex sets of these graphs is enumerated by generalized Fibonacci and Lucas numbers.  We saw in Section \ref{Pn} and Section \ref{cycle} that the independent vertex sets of path graphs and cycle graphs are closely related to the flooding cascade sets.  Chainsaw and broken chainsaw graphs are natural generalizations of cycle graphs and path graphs, respectively.
\begin{quest}
    Is there a recursive formula for the flood polynomial of chainsaw graphs or broken chainsaw graphs that generalize the results of Section \ref{Pn} and Section \ref{cycle}?
\end{quest}

\section{Acknowledgments}
The authors would like to thank Molly Lynch for providing feedback during the editing process.

\end{document}